\newcommand{\pe}{\mathfrak{pe}}
\newcommand{\Pe}{\mathbf{Pe}}
\newcommand{\gl}{\mathfrak{gl}}
\newcommand{\gen}{\mathrm{gen}}
\newcommand{\SVec}{\mathrm{SVec}}
\author{Rohit Nagpal}
\address{Department of Mathematics, The University of Chicago, Chicago, IL}
\curraddr{Department of Mathematics, University of Michigan, Ann Arbor, MI}
\email{\href{mailto:rohitna@umich.edu}{rohitna@umich.edu}}
\urladdr{\url{http://www-personal.umich.edu/~rohitna/}}
\author{Steven V Sam}
\address{Department of Mathematics, University of Wisconsin, Madison, WI}
\curraddr{Department of Mathematics, University of California, San Diego, CA}
\email{\href{mailto:ssam@math.ucsd.edu}{ssam@math.ucsd.edu}}
\urladdr{\url{http://math.ucsd.edu/~ssam/}}
\author{Andrew Snowden}
\address{Department of Mathematics, University of Michigan, Ann Arbor, MI}
\email{\href{mailto:asnowden@umich.edu}{asnowden@umich.edu}}
\urladdr{\url{http://www-personal.umich.edu/~asnowden/}}
\thanks{SS was partially supported by NSF grant DMS-1500069. AS was partially supported by NSF grants DMS-1303082 and DMS-1453893.}
\subjclass[2010]{%
13E05, 
13A50.
}
\date{September 29, 2018}
\title[Noetherianity of some degree two skew-tca's]{Noetherianity of some degree two\\ twisted skew-commutative algebras}
\begin{document}

\begin{abstract}
A major open problem in the theory of twisted commutative algebras (tca's) is proving noetherianity of finitely generated tca's. For bounded tca's this is easy; in the unbounded case, noetherianity is only known for $\Sym(\Sym^2(\bC^{\infty}))$ and $\Sym(\lw^2(\bC^{\infty}))$. In this paper, we establish noetherianity for the skew-commutative versions of these two algebras, namely $\lw(\Sym^2(\bC^{\infty}))$ and $\lw(\lw^2(\bC^{\infty}))$. The result depends on work of Serganova on the representation theory of the infinite periplectic Lie superalgebra, and has found application in the work of Miller--Wilson on ``secondary representation stability'' in the cohomology of configuration spaces.
\end{abstract}

\maketitle
\tableofcontents

\section{Introduction}

\subsection{Statement of results}

This paper is a sequel to \cite{sym2noeth}. Recall that a {\bf twisted commutative algebra} (tca) is a commutative $\bC$-algebra equipped with an action of the infinite general linear group $\GL_{\infty}$ by algebra homomorphisms under which it forms a polynomial representation. A major open problem in tca theory is proving noetherianity of finitely generated tca's. For so-called bounded tca's, this is straightforward \cite[Prop.~9.1.6]{expos}. The main result of \cite{sym2noeth} states that the unbounded tca's $\Sym(\Sym^2(\bC^{\infty}))$ and $\Sym(\lw^2(\bC^{\infty}))$ are noetherian. Currently, these are the only known examples of noetherianity for unbounded tca's.

One can also consider skew-commutative analogues of tca's, the typical examples being exterior (rather than symmetric) algebras. The main theorem of this paper is a skew analogue of \cite{sym2noeth} (see \S \ref{s:prelim} for the precise definitions of the terms):

\begin{theorem} \label{mainthm}
The twisted skew-commutative algebras $\lw(\Sym^2(\bC^{\infty}))$ and $\lw(\lw^2(\bC^{\infty}))$ are noetherian.
\end{theorem}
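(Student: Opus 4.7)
My plan is to follow the overall strategy of \cite{sym2noeth}, with the infinite symplectic and orthogonal Lie algebras used there replaced by the infinite periplectic Lie superalgebra $\pe$. Recall that in \cite{sym2noeth} the tca $\ssA$ (respectively $\swA$) was analyzed via its close relationship with integrable representations of $\mathfrak{sp}_{\infty}$ (respectively $\mathfrak{o}_{\infty}$), the Lie algebra preserving an even symmetric (respectively skew) bilinear form on $\bC^{\infty}$. The skew-commutative analogs $\wsA$ and $\wwA$ in Theorem~\ref{mainthm} are naturally the coordinate algebras of spaces of \emph{odd} bilinear forms, whose infinitesimal symmetries are captured by $\pe(\infty)$. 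The first step is therefore to construct an equivalence (or at least a sufficiently rigid comparison functor) between the category of finitely generated $\wsA$- or $\wwA$-modules and a suitable category of integrable $\pe(\infty)$-representations, matching ``finitely generated'' on one side with an appropriate finiteness condition on the other.

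The second step is to extract a usable structural statement from Serganova's description of the representation theory of $\pe(\infty)$. Concretely, the goal is to show that every object in the target category admits a finite filtration whose successive quotients lie in a restricted family --- for instance, simple integrable representations, or modules pulled back from a ``truncation'' whose noetherianity is accessible by a cruder argument. Serganova's classification of simples together with her block and composition-multiplicity results are the main inputs here; the output should be a cleanly categorical finiteness statement rather than a weight-by-weight description.

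Given Steps 1 and 2, the third step is a rank induction in the spirit of \cite{sym2noeth}: an ascending chain of submodules in a finitely generated $\wsA$- or $\wwA$-module is analyzed by passing to a ``generic'' stratum, where the structural filtration of Step 2 trivializes the chain, and then handling the boundary via a Nakayama-style argument controlled by a smaller algebra in the same family (mirroring the reduction along a hyperplane used for $\ssA$ and $\swA$). This local-to-global reduction should be largely formal once the previous two steps are in hand.

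I expect Step 2 to be the principal obstacle. The periplectic Lie superalgebra is not reductive, its Cartan does not split representations cleanly, and parity considerably complicates the weight combinatorics, so Serganova's results cannot simply be quoted in the form one wants --- some genuine reworking into categorical or sheaf-theoretic language will be necessary. A secondary difficulty lies in Step 1: the comparison between $\wsA$-modules and $\pe(\infty)$-representations must respect the right finiteness notion on both sides, and must behave well under the shift/truncation operations that appear in Step 3, as otherwise the rank induction cannot even be set up.
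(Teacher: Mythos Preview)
Your proposal identifies the correct overall strategy --- replace the role of $\mathfrak{o}_\infty$/$\mathfrak{sp}_\infty$ in \cite{sym2noeth} by the infinite periplectic Lie superalgebra $\fpe$, invoke Serganova's structure theory for $\Rep(\fpe)$, and then run the torsion/generic machinery of \cite{sym2noeth} --- and this is exactly what the paper does. But your Step~1 glosses over the paper's main technical insight. The algebra $\lw(\Sym^2(\bC^{\infty}))$ is \emph{not} a coordinate ring in any useful sense: every positive-degree element is nilpotent, so there is no ``space of odd forms'' on $\bC^{\infty}$, no evident $\fpe$-stable maximal ideal, and no localization to form a generic category. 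The paper's key move is to first pass from $\bC^{\infty}$ to the super vector space $\bV=\bC^{\infty\mid\infty}$ (harmless, since the polynomial representation theories agree) and then to observe that $\lw(\Sym^2(\bV))$ is, up to a parity shift, the genuinely super-commutative algebra $A=\Sym(\Sym^2(\bV)[1])$. Now $\Spec(A)$ really is the space of periplectic forms on $\bV$, one has a $\fpe$-stable maximal ideal $\fm$, a multiplicative set $S$, and one can prove the equivalence $\Mod_A/\Mod_A^{\tors}\simeq\Rep(\fpe)$ by analyzing $S^{-1}M$ and $M/\fm M$. Without this super reformulation your Step~1 cannot even be set up.

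Your Step~3 also does not match what \cite{sym2noeth} (and this paper) actually do. There is no rank induction and no ``reduction along a hyperplane to a smaller algebra in the same family.'' The argument is: (i) for any nonzero ideal $I$, the quotient $A/I$ is essentially bounded and hence noetherian by the easy bounded case; (ii) the generic category $\Mod_K$ is equivalent to $\Rep(\fpe)$, which one controls via Serganova (the relevant output being that the $\bS_\lambda(\bV)$ are the indecomposable injectives and that finite length objects admit finite injective resolutions --- not a filtration by simples as you suggest); (iii) one glues (i) and (ii) using the section functor $\rS$, the (FT) property, and the fact that free modules $V\otimes A$ are saturated. Your description of Step~3 would not assemble into a proof as written, though once Steps~1 and~2 are done correctly the actual gluing is close to formal from \cite{sym2noeth}.
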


\begin{remark} \label{rmk1}
The two algebras are ``transposes'' of each other (see Remark~\ref{rmk:transpose}), and so the noetherianity of one of them implies it for the other. For this reason, we work exclusively with $\lw(\Sym^2(\bC^{\infty}))$ in this paper.
\end{remark}

\subsection{Idea of proof}

The proof of Theorem~\ref{mainthm} follows the proof of \cite{sym2noeth} closely, so we start by recalling how it goes. Let $B=\Sym(\Sym^2(\bC^{\infty}))$, and let $\Mod_B$ denote the category of $B$-modules. Define $\Mod_B^{\gen}$ (the ``generic category'') to be the quotient of $\Mod_B$ by the Serre subcategory $\Mod_B^{\tors}$ of modules with proper support (i.e., every element has non-zero annihilator). The approach of \cite{sym2noeth} is to understand the categories $\Mod_B^{\tors}$ and $\Mod_B^{\gen}$ separately, and then understand something about how they glue together to form $\Mod_B$, and finally use all of this to deduce the noetherianity result. We pursue a similar approach to prove Theorem~\ref{mainthm}. The main conceptual difficulty (at least for us) is carrying out the analysis of the generic category, so we focus on that here.

We start by recalling the analysis of $\Mod_B^{\gen}$. Geometrically, $\Spec(B)$ is the space of symmetric bilinear forms on $\bC^{\infty}$. An object of $\Mod_B$ is a $\GL_{\infty}$-equivariant quasi-coherent sheaf on this space, and an object of $\Mod_B^{\gen}$ is an equivariant sheaf on the ``open orbit'' of non-degenerate forms. (There is not literally such an open orbit, but this is a useful picture to have in mind.) Since the stabilizer of a non-degenerate form is the infinite orthogonal group $\bO_{\infty}$, we expect an equivalence between $\Mod_B^{\gen}$ and some category of representations of $\bO_{\infty}$. In fact, we show
\begin{equation} \label{eq:equiv}
\Mod_B^{\gen} = \Rep(\bO_{\infty}),
\end{equation}
where the right side is the category of algebraic representations of the infinite orthogonal group $\bO_{\infty}$ as studied in \cite{infrank}. To be a little more precise, we fix a non-degenerate symmetric bilinear form $\Sym^2(\bC^{\infty}) \to \bC$. This gives us an $\bO_{\infty}$-equivariant map of algebras $B \to \bC$. Base change under this map defines a functor $\Mod_B \to \Rep(\bO_{\infty})$ which induces the equivalence \eqref{eq:equiv}. The theory of algebraic representations of $\bO_{\infty}$ is well-understood, and so this equivalence tells us all we need to know about $\Mod_B^{\gen}$. 

We now explain the analogue of the above picture in the present setting. Initially, it is not clear how one should proceed: every positive degree element of $\lw(\Sym^2(\bC^{\infty}))$ is nilpotent, so there is not a geometric picture to work with, and thus not even a clear guess for how to describe the generic category. Our main insight is that by systematically working with super objects these difficulties disappear. First, we note that there is little difference between the polynomial representation theories of $\GL_{\infty}$ and $\GL_{\infty \mid \infty}$, and so it suffices to prove noetherianity of the ``twisted super skew-commutative algebra'' $\lw(\Sym^2(\bC^{\infty \mid \infty}))$. Next, we note that there is little difference between $\lw(\Sym^2(\bC^{\infty \mid \infty}))$ and $A=\Sym(\Sym^2(\bC^{\infty \mid \infty})[1])$, where $[1]$ denotes shift in super degree, and so it suffices to prove noetherianity of $A$, which is (super) commutative. We are now in a situation reminiscent of \cite{sym2noeth}: $\Spec(A)$ is the space of periplectic forms on $\bC^{\infty \mid \infty}$, and so we expect an equivalence
\begin{equation} \label{eq:equiv2}
\Mod_A^{\gen} = \Rep(\Pe_{\infty}),
\end{equation}
where $\Rep(\Pe_{\infty})$ is the category of algebraic representations of the infinite periplectic supergroup. (It is easier to work with the Lie superalgebra $\fpe_\infty$, so we will do that instead.) More explicitly, by fixing a non-degenerate periplectic form $\Sym^2(\bC^{\infty \mid \infty})[1] \to \bC$, we obtain a $\fpe$-equivariant algebra homomorphism $A \to \bC$. Base change along this map defines a functor $\Mod_A \to \Rep(\fpe_{\infty})$, and we show that this induces an equivalence as in \eqref{eq:equiv2}. The algebraic representation theory of $\fpe_{\infty}$ has been worked out by Serganova \cite{serganova}, and is quite similar to the theory for $\bO_{\infty}$. Thus \eqref{eq:equiv2} supplies us with all the information we need about $\Mod_A^{\gen}$.

\subsection{Motivation}

$A$-modules appear in recent work of Miller--Wilson \cite{MW} on ``secondary representation stability'' of the rational homology of connected non-compact manifolds (of finite type and dimension $\ge 2$). More specifically, work of Church--Ellenberg--Farb \cite{CEF} shows that each homology group of such a manifold is finitely generated as an ``FI-module'' (rationally, FI-modules are equivalent to modules over the tca $\Sym(\bV)$, see \cite[Proposition 1.3.5]{symc1}), and secondary stability can be phrased as saying that the set of minimal generators of these homology groups (as the homological degree varies) carries an $A$-module structure, and are graded in such a way that each graded piece is a finitely generated $A$-module. Their proof crucially depends upon Theorem~\ref{mainthm}.

Another motivation for our work comes from Koszul duality. Given a finitely generated $\Sym(\Sym^2 \bV)$-module $M$, the results of our previous paper \cite{sym2noeth} shows that $M$ has a finitely generated free resolution. Standard properties of Koszul duality imply that the space of minimal generators of the resolution can be given the structure of an $A$-module, and in fact, it is a direct sum of its ``linear strands''. In \cite[\S 6]{symc1}, we studied the $\Sym(\bV[1])$-module structure provided by Koszul duality for finitely generated $\Sym(\bV)$-modules and used to construct an interesting auto-equivalence on the derived category of finitely generated $\Sym(\bV)$-modules. The noetherianity result proved here is a starting point for the Koszul duality between $\Sym(\Sym^2 \bV)$-modules and $A$-modules. One subtlety is that in general, the number of linear strands need not be finite (i.e., Castelnuovo--Mumford regularity need not be finite), in contrast with the case of $\Sym(\bV)$-modules.

\subsection{Outline}

In \S \ref{s:prelim} we recall some background material on tca's and their super analogs. In \S \ref{sec:Perep} we introduce the category $\Rep(\fpe)$ of algebraic representations of the infinite periplectic Lie superalgebra, and recall Serganova's work on this category. In \S \ref{ss:subgroups}, we analyze some subgroups of $\GL(\bV)$ that are needed in the subsequent sections. In \S \ref{sec:serre-quotient} we introduce the notion of a torsion $A$-module, and define the Serre quotient category $\Mod_K=\Mod_A/\Mod_A^{\tors}$. In \S \ref{s:phi} and \S \ref{s:equiv}, we show that these two categories are equivalent; this is where the meat of the paper lies. Finally, in \S \ref{s:proof} we prove Theorem~\ref{mainthm}. 

\subsection{Notation}

We now fix some notation that will be in effect for the entire paper.
\begin{itemize}
\item For a super vector space $V$, we write ${}_0V$ and ${}_1V$ for the graded pieces of $V$. We refer to this as the {\bf super grading}. Most super vector spaces we consider will be endowed with an additional grading (indexed by $\bZ$ or $\bZ/2$), compatible with the super grading, called the {\bf central grading}. We write $V_n$ for the central degree $n$ piece. We write $(-)[1]$ for shift in super grading, so that ${}_0(V[1])={}_1V$ and ${}_1(V[1])={}_0V$.

\item  We let $\bV$ be the super vector space $\bC^{\infty \vert \infty}=\bigcup_{n \ge 0} \bC^{n \mid n}$. We let $e_1, e_2, \ldots$ be a basis for the even part ${}_0\bV$ and let $f_1, f_2, \ldots$ be a basis for the odd part ${}_1\bV$. We let $\GL_{\infty\mid \infty}=\bigcup_{n \ge 0} \GL_{n \mid n}$. We think of this as acting on $\bV$.

\item We let $\epsilon$ be a basis vector for $\bC[1]$, and write $\epsilon^n$ for the resulting basis vector of $\bC[n]=\bC[1]^{\otimes n}$.

\item Let $x_{i,j}=e_i f_j \epsilon$, let $y_{i,j}=e_i e_j \epsilon$, and let $z_{i,j}=f_i f_j \epsilon$, regarded as elements of $\Sym^2(\bV)[1]$. Note that $x_{i,j}$ has super degree~0 while $y_{i,j}$ and $z_{i,j}$ have super degree~1. These form a basis of $\Sym^2(\bV)[1]$, assuming one takes into account the identifications $y_{i,j}=y_{j,i}$ and $z_{i,j}=-z_{j,i}$.

\item We let $A$ be the symmetric algebra $\Sym(\Sym^2(\bV)[1])$. This is the (super) polynomial ring in the variables $x_{i,j}$, $y_{i,j}$, and $z_{i,j}$. In particular, the variables $x_{i,j}$ are in the center of $A$ and  we have the following relations \begin{align*} 
y_{i,j} y_{k,l} &= - y_{k, l} y_{i,j},\\
z_{i,j} z_{k,l} &= - z_{k, l} z_{i,j},\\
y_{i,j} z_{k,l} &= - z_{k, l} y_{i,j}.
\end{align*}
As explained in \S \ref{s:prelim}, we regard $A$ as a super tca. 

\item Let $\omega \colon \Sym^2(\bV)[1] \to \bC$ be the linear map defined by $\omega(x_{i,j})=\delta_{i,j}$ and $\omega(y_{i,j})=\omega(z_{i,j})=0$. This is an odd symmetric form on $\bV$. We let $\Pe \subset \GL(\bV)$ be the stabilizer of $\omega$, the infinite periplectic group. The Lie superalgebra of $\Pe$ is $\fpe$.

\item We let $\fm$ be the ideal of $A$ generated by the following elements: (i) the $x_{i,j}$ with $i \ne j$; (ii) the $x_{i,i}-1$; (iii) the $y_{i,j}$; and (iv) the $z_{i,j}$. Of course, $\fm$ is just the kernel of the algebra homomorphism $A \to \bC$ induced by $\omega$, and is therefore $\fpe$-stable. (Note: $\fm$ is \emph{not} $\GL_{\infty\mid\infty}$ stable. In the notation and terminology introduced in \S \ref{s:prelim}, we should really say that $\fm$ is an ideal of $\vert A \vert$.) We let $S$ be the set of super homogeneous elements of $A$ not belonging to $\fm$ (they all have super degree~0). This is a multiplicative subset of $A$.

\item Greek letters such as $\lambda, \mu, \nu, \dots$ will often denote integer partitions, which are finite weakly decreasing sequences of non-negative integers. These are used to index Schur functors $\bS_\lambda$. We will identify integer partitions with Young diagrams. We will denote the sum of parts of a partition $\lambda$ by $\vert \lambda \vert$. The notation $n \times k$ will denote the partition $(k, k, \dots, k)$ with $k$ repeated $n$ times, and $\emptyset$ denotes the unique partition of $0$.
\end{itemize}

\section{Preliminaries} \label{s:prelim}

\subsection{Polynomial representations of $\GL_{\infty}$ and tca's}

In this section, we recall some background material. We refer to \cite{expos} for more details. Let $\GL_{\infty}$ be the group $\bigcup_{n \ge 1} \GL_n$ and let $\bC^{\infty}=\bigcup_{n \ge 1} \bC^n$. A representation of $\GL_{\infty}$ is {\bf polynomial} if it decomposes as a (perhaps infinite) direct sum of Schur functors $\bS_{\lambda}(\bC^{\infty})$. We let $\cV^{\circ}$ denote the category of such representations. It is a semi-simple abelian category. Furthermore, it is closed under tensor product. A {\bf twisted commutative algebra} (tca) is a commutative algebra object in this tensor category. Concretely, a tca is a commutative associative unital $\bC$-algebra $B$ equipped with an action of $\GL_{\infty}$ by algebra homomorphisms under which, as a linear representation, it is polynomial. We write $\vert B \vert$ when we want to refer to the algebra $B$ without thinking of it as a tca.

Let $B$ be a tca. By a {\bf $B$-module} we will always mean a module object in $\cV^{\circ}$. (We use the term $\vert B \vert$-module for a module over the algebra $\vert B \vert$ with no extra structure.) Concretely, a $B$-module is a $\GL_{\infty}$-equivariant module over $\vert B \vert$ which, as a linear representation, is polynomial. There is an obvious notion of finite generation for modules. We say that $B$ is {\bf noetherian} if every submodule of a finitely generated $B$-module is again finitely generated.

\begin{remark}
If $B$ is noetherian then every ideal of $B$ is finitely generated. It is unknown if the converse of this statement holds. We note that it is relatively easy to classify all of the ideals of $B=\Sym(\Sym^2(\bC^{\infty}))$ and prove their finite generation ``by hand,'' but the proof of noetherianity of $B$ (at least the one from \cite{sym2noeth}) is much more involved.
\end{remark}

In this paper we will primarily be concerned with the algebra $\lw(\Sym^2(\bC^{\infty}))$. This is an algebra object in $\cV^{\circ}$, but it is not commutative, so it is not a tca. However, it is quite close to being one. We define the notion of module and noetherianity just as for tca's.

\begin{remark} \label{rmk:transpose}
  The category $\cV^{\circ}$ admits a transpose functor $(-)^\dag$ (see \cite[\S 7.4]{expos} for a discussion). On simple objects, it is given by $\bS_{\lambda}(\bC^{\infty})^{\dag} = \bS_{\lambda^{\dag}}(\bC^{\infty})$, where $\lambda^{\dag}$ is the transposed partition. The transpose functor is a tensor functor, but not a symmetric tensor functor: it interchanges the natural symmetry of the tensor functor with the graded symmetry. In fact, the transpose functor is induced by precomposing  with the functor $V \mapsto V[1]$ which shifts the super degree by $1$. It is clear that  $ \bS_{\lambda}(\bC^{\infty}[1]) = \bS_{\lambda^{\dag}}(\bC^{\infty})$ up to a possible shift in super degree which $(-)^\dag$ takes into account. From this description, one can see that $\lw(\Sym^2(\bC^{\infty}))^{\dag} = \lw(\Sym^2(\bC^{\infty}[1])) = \lw(\lw^2(\bC^{\infty}))$. And so, as stated in Remark~\ref{rmk1}, it suffices to prove the main theorem for $\lw(\Sym^2(\bC^{\infty}))$. 
\end{remark}

\subsection{Polynomial representations of $\GL_{\infty \mid \infty}$ and super tca's}

A {\bf polynomial representation} of $\GL_{\infty \mid \infty}$ is one that decomposes as a direct sum of $\bS_{\lambda}(\bV)$'s and $\bS_{\lambda}(\bV)[1]$'s. We let $\cV$ be the category of such representations. We let $\cV_0$ be the subcategory of representations that decompose as a direct sum of just $\bS_{\lambda}(\bV)$'s. These are both semi-simple abelian categories and closed under tensor product. (To see this, it suffices to note that $\bS_{\lambda}(\bV)$ is an irreducible representation of $\GL_{\infty \mid \infty}$. This can be deduced from the fact that $\bS_\lambda(\bC^{n|m})$ is an irreducible representation of $\GL_{n|m}$ for all $n,m$, which follows from the discussion in \cite[\S 3.2]{chengwang}.) We can consider algebra objects in this category, the commutative ones being super analogues of tca's, and modules for them. We define noetherianity as for tca's.

The category $\cV^{\circ}$ can equivalently be thought of as the category of Schur functors, and one can evaluate a Schur functor on an object of any symmetric tensor category. We therefore have a functor
\begin{displaymath}
\cV^{\circ} \to \cV_0, \qquad \bS_{\lambda}(\bC^{\infty}) \mapsto \bS_{\lambda}(\bV).
\end{displaymath}
This is easily seen to be an equivalence of abelian tensor categories. It follows that an algebra object in $\cV^{\circ}$ is noetherian if and only if the corresponding object in $\cV_0$ is. Thus to prove Theorem~\ref{mainthm}, it suffices to show that $\lw(\Sym^2(\bV))$ is noetherian, as an algebra in $\cV_0$ or $\cV$. (Proving the result in $\cV$ is a priori stronger, as it allows for more modules, but is easily seen to be equivalent.)

Every object of $\cV$ is a super vector space, and therefore has a super grading. Every object of $\cV$ also admits a central grading from the action of the ``center'' of $\GL_{\infty \mid \infty}$. (This group does not contain the scalar matrices, so does not actually have a center. However, for any given element of a representation one can mimic the action of what should be the center by taking a matrix that is approximately scalar. See \cite[\S 2.2.2]{infrank} for details.) Explicitly, the simple objects $\bS_{\lambda}(\bV)$ and $\bS_{\lambda}(\bV)[1]$ are concentrated in central degree $\vert \lambda \vert$.

Recall that $A=\Sym(\Sym^2(\bV)[1])$. This is an algebra object in $\cV$. Let $A'=\lw(\Sym^2(\bV))$; this is also an algebra in $\cV$.

\begin{proposition}
The module categories $\Mod_A$ and $\Mod_{A'}$ are equivalent. In particular, $A$ is noetherian if and only if $A'$ is.
\end{proposition}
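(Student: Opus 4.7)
The plan is to exhibit an explicit equivalence $F \colon \Mod_A \to \Mod_{A'}$ via a super-grading twist, resting on the fact that $\vert A \vert$ and $\vert A' \vert$ agree as $\GL_{\infty\mid\infty}$-equivariant ungraded algebras, so that the entire difference between $A$ and $A'$ is encoded by super-degree shifts compatible with the central grading.

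First I would identify the underlying algebras: under the correspondence of generators $x_{i,j} \leftrightarrow e_i f_j$, $y_{i,j} \leftrightarrow e_i e_j$, $z_{i,j} \leftrightarrow f_i f_j$, the super-commutativity of $A$ and the super-anti-commutativity of $A'$ impose identical ungraded relations (the $x$-type generators commute with everything, while the $y$- and $z$-type generators anti-commute among themselves), giving $\vert A \vert \cong \vert A' \vert$ as $\GL_{\infty\mid\infty}$-equivariant associative algebras. The super gradings differ: for any super vector space $W$ in $\cV$, decomposing super-symmetric and super-exterior powers into their even/odd tensor factors yields a natural $\GL$-equivariant isomorphism $\Sym^n(W[1]) \cong \lw^n(W)[n]$ in $\cV$. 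Applied to $W = \Sym^2(\bV)$, this gives $A^{(n)} \cong A'^{(n)}[n]$, where $A^{(n)} = \Sym^n(\Sym^2(\bV)[1])$ and $A'^{(n)} = \lw^n(\Sym^2(\bV))$ are the generator-degree-$n$ pieces, both sitting in central degree $2n$.

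Next I would construct $F$ as follows. For an $A$-module $M$ with central decomposition $M = \bigoplus_d M_d$, set $F(M) = \bigoplus_d M_d[\lfloor d/2 \rfloor]$ in $\cV$, with the same underlying $\vert A \vert = \vert A' \vert$-action. The key compatibility check is that the $A$-action $A^{(n)} \otimes M_d \to M_{d+2n}$, which is super-grading-preserving by hypothesis, transports to a super-grading-preserving $A'$-action on $F(M)$; this reduces to the arithmetic identity $\lfloor (d+2n)/2 \rfloor - \lfloor d/2 \rfloor = n$, which exactly compensates for the super-degree shift between $A^{(n)}$ and $A'^{(n)}$. (Here I use that $A$ is concentrated in even central degrees, so the relevant floor function is additive on the pairs appearing in the action map.) The inverse functor $G \colon \Mod_{A'} \to \Mod_A$ is defined by the same formula, and $G \circ F = \mathrm{id}$ because $2\lfloor d/2 \rfloor \equiv 0 \pmod 2$. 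Hence $F$ is an equivalence of abelian categories, and the noetherianity statement is immediate since equivalent abelian categories have the same noetherian objects.

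The main subtlety I would watch for is verifying that the natural isomorphism $\Sym^n(W[1]) \cong \lw^n(W)[n]$ intertwines multiplication, so that the super-symmetric product on $A$ truly corresponds to the super-exterior product on $A'$ under the chosen identification of underlying algebras. This is a sign-bookkeeping exercise via the Koszul sign rule, but it is important to check carefully, since an incorrect sign convention would leave $F(M)$ satisfying a twisted version of the $A'$-action axioms rather than the genuine one.
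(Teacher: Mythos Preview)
Your overall strategy---shifting by $[\lfloor d/2\rfloor]$ in central degree $d$---is the same as the paper's, which packages the shift as multiplication by $\epsilon^n \in T(\bC[1])$ and realizes $A$ as the subalgebra $\bigoplus_{n\ge 0} A'_{2n}\epsilon^n$ of $A'\otimes T(\bC[1])$. However, there is a concrete error in your first step that breaks the argument as written.

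You claim that under the correspondence $x_{i,j}\leftrightarrow e_if_j$, $y_{i,j}\leftrightarrow e_ie_j$, $z_{i,j}\leftrightarrow f_if_j$ one has $\vert A\vert\cong\vert A'\vert$ because in both algebras ``the $x$-type generators commute with everything.'' This is false in $A'$. In $\Sym^2(\bV)$ the element $e_if_j$ is \emph{odd} and $e_ke_\ell$ is \emph{even}; in the super-exterior algebra $\lw(\Sym^2(\bV))$ one has $vw=-(-1)^{|v||w|}wv$, so
\[
(e_if_j)(e_ke_\ell)=-(-1)^{1\cdot 0}(e_ke_\ell)(e_if_j)=-(e_ke_\ell)(e_if_j),
\]
i.e.\ the $x$-type and $y$-type generators \emph{anti}-commute in $A'$, whereas $x_{i,j}$ and $y_{k,\ell}$ commute in $A$. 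Hence there is no naive identification $\vert A\vert=\vert A'\vert$, and you cannot define the $A'$-action on $F(M)$ as ``the same underlying action.''

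The fix is exactly the point you flag at the end: the isomorphism $\Sym^n(W[1])\cong\lw^n(W)[n]$ is not the identity on underlying spaces followed by a grading relabel---it carries a Koszul sign, and when you assemble these into an algebra map the product on $\bigoplus_n A'_{2n}[n]$ acquires the twist $(a\epsilon^n)(b\epsilon^m)=(-1)^{n|b|}(ab)\epsilon^{n+m}$. That sign is precisely what repairs the cross-relations and makes the map an algebra isomorphism onto $A$. The paper's $\epsilon^n$ formalism encodes this automatically. Once you carry this sign through to the module action (rather than asserting it is ``the same''), your functor $F(M)=\bigoplus_d M_d[\lfloor d/2\rfloor]$ becomes well-defined and agrees with the paper's construction. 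So the subtlety you deferred is not optional bookkeeping; it is the content of the proof, and resolving it replaces your incorrect $\vert A\vert\cong\vert A'\vert$ claim with the correct statement $A\cong\bigoplus_n A'_{2n}[n]$ as super-algebras.
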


\begin{proof}
Since $A$ is concentrated in even central degrees, any $A$-module decomposes, as an $A$-module, as the direct sum of its even and odd central degree pieces. The same is true for $A'$. We first show that the two categories of modules concentrated in even central degrees are equivalent.

Let $T(\bC[1])$ denote the tensor algebra on $\bC[1]$. Recall that $\epsilon$ is a basis vector of $\bC[1]$. We first observe that $A$ can be identified with the subalgebra $\bigoplus_{n \ge 0} A'_{2n} \epsilon^n$ of $A' \otimes T(\bC[1])$ via $A_{2n} = \Sym^n(\Sym^2(\bV)[1]) = \bigwedge^n(\Sym^2(\bV))[n] = A'_{2n} \epsilon^n$. Now let $M'$ be an $A'$-module concentrated in even central degrees. Put
\begin{displaymath}
M = \bigoplus_{n \ge 0} M'_{2n} \epsilon^n \subset M' \otimes T(\bC[1]).
\end{displaymath}
The ambient space $M' \otimes T(\bC[1])$ is an $A' \otimes T(\bC[1])$ module, and one readily verifies that $M$ is an $A$-submodule. The construction $M' \mapsto M$ is reversible, with exactly the same construction for the reverse. This gives the desired equivalence.

The equivalence for modules in odd central degrees is similar. If $M'$ is such a module, then
\begin{displaymath}
M = \bigoplus_{n \ge 0} M'_{2n+1} \epsilon^n
\end{displaymath}
is an $A$-module, and $M' \mapsto M$ is the equivalence.
\end{proof}

\subsection{A result about $\fm$} \label{sec:fm}

Let $Q_1$ be the set of partitions so that for each box in the main diagonal, the number of boxes in the same row and to the right of it is exactly $1$ more than the number of boxes in the same column and below it. By \cite[Ex. I.8.6(d)]{macdonald}, we have the following decomposition:
\[
A = \bigoplus_{\lambda \in Q_1} \bS_\lambda(\bV)[|\lambda|/2]
\] 
where the shifts are in superdegree. Let $\fp_n \subset A$ be the ideal generated by $\bS_{n \times (n+1)}(\bV)[n(n+1)/2]$.

\begin{lemma} \label{lem:pn-gen}
$\prod_{1 \le i \le j \le n} y_{i,j}$ generates $\fp_n$. 
\end{lemma}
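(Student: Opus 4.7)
The plan is to identify $P \coloneq \prod_{1 \le i \le j \le n} y_{i,j}$ as a nonzero highest-weight vector of the irreducible $\GL_{\infty \mid \infty}$-summand $\bS_{n \times (n+1)}(\bV)[n(n+1)/2] \subset A$. Irreducibility will then force the $A$-ideal generated by $P$ to coincide with $A \cdot \bS_{n \times (n+1)}(\bV)[n(n+1)/2] = \fp_n$.

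First, I would compute the torus weight of $P$. Let $\varepsilon_i$ and $\delta_j$ denote the weights of $e_i$ and $f_j$ under the standard Cartan of $\GL_{\infty \mid \infty}$; since $\epsilon \in \bC[1]$ is a scalar it carries weight $0$, so $y_{i,j} = e_i e_j \epsilon$ has weight $\varepsilon_i + \varepsilon_j$. Summing over $1 \le i \le j \le n$, where each index $k$ occurs in the pair $(k,k)$ with multiplicity $2$ and in the $n-1$ off-diagonal pairs involving $k$ with multiplicity $1$ (total $n+1$), the weight of $P$ is exactly $(n+1)\sum_{k=1}^n \varepsilon_k$, which is the highest weight of $\bS_{n \times (n+1)}(\bV)$.

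Next, I would show that this weight occurs in $A$ with multiplicity exactly one in central degree $n(n+1)$, spanned by $P$. The generators $x_{i,j}, y_{i,j}, z_{i,j}$ of $A$ have respective weights $\varepsilon_i + \delta_j$, $\varepsilon_i + \varepsilon_j$, $\delta_i + \delta_j$, so any monomial whose weight has trivial $\delta$-component is a monomial in the $y_{i,j}$'s alone. As the $y_{i,j}$'s are odd in $\Sym^2(\bV)[1]$, they anticommute and satisfy $y_{i,j}^2=0$ inside $A$, hence such a monomial is, up to sign, a wedge product of distinct $y_{i,j}$'s with $i \le j$. Forcing the total $\varepsilon$-weight to be $(n+1)\sum_k \varepsilon_k$ requires each index $k$ to appear in $n+1$ of the chosen pairs; since there are exactly $n(n+1)/2$ pairs with $1 \le i \le j \le n$ and taking all of them already gives each index multiplicity $n+1$, the unique solution is $P$ itself. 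In particular, $P \ne 0$.

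Finally, because $n \times (n+1)$ lies in $Q_1$, the summand $\bS_{n \times (n+1)}(\bV)[n(n+1)/2]$ appears in the decomposition of $A$, and its highest weight $(n+1)\sum_k \varepsilon_k$ has Kostka multiplicity one inside it. Matching this against the one-dimensional upper bound on the full weight space of $A$ established above, no other summand of $A$ can contribute to this weight, so $P$ must lie entirely in $\bS_{n \times (n+1)}(\bV)[n(n+1)/2]$. Being a nonzero element of this irreducible $\GL_{\infty \mid \infty}$-representation, its $\GL_{\infty \mid \infty}$-translates span the whole summand, and therefore the $A$-ideal generated by $P$ contains the generating subspace of $\fp_n$ and equals it. The only delicate step is the weight-space count; this becomes elementary once one exploits the exterior-algebra structure on the odd generators of $A$.
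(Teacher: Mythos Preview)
Your argument is correct. Both your proof and the paper's establish that $P$ lies in the irreducible summand $\bS_{n\times(n+1)}(\bV)[n(n+1)/2]$ and then invoke irreducibility, but the mechanisms differ. You work in the full space $\bV$ and pin down $P$ by a torus-weight count: the weight $(n+1)\sum_{k=1}^n \varepsilon_k$ forces a monomial purely in the $y$'s, anticommutativity forces a squarefree product, and a counting argument shows the only such product with the right multiplicities is $P$ itself; matching this one-dimensional weight space against the highest-weight line of the known $Q_1$-summand places $P$ there. The paper instead specializes to $\bC^{n\mid 0}$: since $P$ involves only $e_1,\ldots,e_n$, it already lives in $A(\bC^{n\mid 0})$, and in central degree $n(n+1)$ this is the top exterior power $\lw^{N}(\Sym^2(\bC^n))[N]$, a one-dimensional space which coincides with $\bS_{n\times(n+1)}(\bC^n)[N]$. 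The specialization argument is shorter and sidesteps the weight bookkeeping, while your approach makes the highest-weight structure explicit and does not require identifying which $Q_1$-shapes survive on $\bC^n$. Either way the ``delicate step'' you flag is genuinely elementary: the total $\varepsilon$-degree $n(n+1)$ forces exactly $\binom{n+1}{2}$ distinct $y$-factors with indices in $\{1,\ldots,n\}$, and there are only that many available.
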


\begin{proof}
Let $N=\binom{n+1}{2}$. We have
\begin{displaymath}
A_{n(n+1)}(\bC^{n\mid 0}) =  \lw^N(\Sym^2(\bC^n))[N] = \bS_{n \times (n+1)}(\bC^n)[N].
\end{displaymath}
This is $1$-dimensional and spanned by the element under discussion, so $\prod_{1 \le i \le j \le n} y_{i,j} \in \fp_n$. Since $\fp_n$ is generated by the Schur functor $\bS_{n \times (n+1)}[N]$, we conclude that $\fp_n$ is generated by this element.
\end{proof}

\begin{proposition} \label{prop:unitideal}
We have $\fm + \fp_n = A$ for all $n \ge 1$.
\end{proposition}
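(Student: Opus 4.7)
The plan is to show $\fp_n \not\subseteq \fm$; since $A/\fm \cong \bC$ is a field, $\fm$ is a maximal ideal, and this inclusion failure yields $\fp_n + \fm = A$. The idea is to act on the generator $\alpha \coloneq \prod_{1 \le i \le j \le n} y_{i,j}$ from Lemma~\ref{lem:pn-gen} using carefully chosen elements of $\gl_{\infty|\infty}$, which preserve $\fp_n$ since $\fp_n$ is generated by a $\GL$-subrepresentation.

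For each pair $(i,j)$ with $1 \leq i \leq j \leq n$, let $T_{i,j} \in \gl_{\infty|\infty}$ be the odd element sending $e_j \mapsto f_i$ and killing every other basis vector. Acting as an odd derivation on $A$, it satisfies $T_{i,j}(y_{k,l}) = \delta_{k,j} x_{l,i} + \delta_{l,j} x_{k,i}$, $T_{i,j}(x_{k,l}) = \delta_{k,j} z_{i,l}$, and $T_{i,j}(z_{k,l}) = 0$. Let $g = \prod_{1 \leq i \leq j \leq n} T_{i,j}$ be the product of these $N = \binom{n+1}{2}$ derivations taken in some fixed order. Then $g \cdot \alpha \in \fp_n$, and the proposition reduces to showing that $\omega(g \cdot \alpha) \neq 0$.

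To verify this, expand $g \cdot \alpha$ via Leibniz into a sum of terms indexed by how each $T_{i,j}$ is distributed across the factors of $\alpha$, allowing later derivations to act on intermediate $x$-factors produced by earlier ones. A factor $y_{k,l}$ hit by no $T$ remains as $y_{k,l}$, forcing $\omega = 0$; a factor hit twice or more produces a $z$-factor, again forcing $\omega = 0$. Among bijective matchings of $T$'s to $y$'s, a short case analysis using the formula above shows that the only matching producing exclusively \emph{diagonal} $x$-factors is the identity matching $T_{i,j} \leftrightarrow y_{i,j}$: any other assignment produces some $x_{l,i}$ or $x_{k,i}$ with off-diagonal indices, which is killed by $\omega$. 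The identity matching contributes $\pm 2^n \prod_{1 \le i \le j \le n} x_{i,i}$, whose $\omega$-image is $\pm 2^n \neq 0$. Therefore $g \cdot \alpha \in \fp_n \setminus \fm$, completing the proof. The main obstacle is this combinatorial case analysis, but it is routine once the explicit derivation formulas are in hand.
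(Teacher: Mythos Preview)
Your proof is correct and follows essentially the same strategy as the paper's: act on the generator $\alpha=\prod y_{i,j}$ by a product of $N=\binom{n+1}{2}$ odd matrix units in $\gl(\bV)$, each turning a single $y$ into an $x$, and check that the result has nonzero image in $A/\fm$. The paper applies the operators sending $e_i\mapsto f_j$ (for $i\le j$), whereas you use the transposed family sending $e_j\mapsto f_i$; correspondingly the paper obtains the term $\pm\prod x_{j,j}$ while you obtain $\pm 2^{n}\prod x_{i,i}$, but the argument is otherwise identical. Your write-up is somewhat more explicit than the paper's about why the identity matching is the unique survivor under the Leibniz expansion.
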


\begin{proof}
Let $X_{i,j} \in \fgl_{\infty}$ be the element sending $e_i$ to $f_j$ and killing the $e_k$ with $k \ne i$ and the $f_{\ell}$. Consider the element
\[
\bv = X_{1,1} X_{1,2} X_{2,2} \cdots X_{1,n} \cdots X_{n-1,n} X_{n,n} \prod_{1 \le i \le j \le n} y_{i,j}.
\]
Expanding this, we find a term of the form
\[
\bv_0 = \pm \prod_{1 \le i \le j \le n} x_{j,j},
\]
and all other terms have the property that they contain a factor of the form $x_{i,j}$ where $i \ne j$, $y_{i,j}$, or $z_{i,j}$. Thus $\bv \equiv \bv_0 \equiv \pm 1 \pmod \fm$. Since $\fp_n$ is closed under $\fgl_{\infty}$, we have $\bv \in \fp_n$. Since $\pm \bv-1 \in \fm$, it follows that $1 \in \fm+\fp_n$.
\end{proof}

For $n >0$, let $y(n) = \prod_{1 \le i \le j \le n} y_{i,j}$.

\begin{corollary} \label{cor:ann}
If $\fa$ is a non-zero ideal of $A$, then $\fa \supseteq \fp_n$ for some $n$. In particular, $\fa + \fm = A$.
\end{corollary}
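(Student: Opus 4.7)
The plan is to produce, inside any nonzero ideal $\fa$, the element $y_{I_N}\coloneq\prod_{1\le i\le j\le N}y_{i,j}$ for some $N$; by Lemma~\ref{lem:pn-gen} this element generates $\fp_N$, so $\fp_N\subseteq\fa$, and then the ``in particular'' clause is immediate from Proposition~\ref{prop:unitideal} via $\fa+\fm\supseteq\fp_N+\fm=A$. Since $\fa$ is $\GL_{\infty\mid\infty}$-stable and $A=\bigoplus_{\lambda\in Q_1}\bS_\lambda(\bV)[|\lambda|/2]$, one first selects a simple summand $\bS_\lambda\subseteq\fa$. If $\lambda=\emptyset$ then $1\in\fa$, so $\fa=A$ and we are done; henceforth assume $\lambda\ne\emptyset$.

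The key observation is that $A$ contains the subalgebra $\Sym(\Sym^2({}_0\bV)[1])=\lw(\Sym^2({}_0\bV))$ (super-symmetric on an odd space coincides with exterior on the underlying space), and the highest weight vector $v_\lambda$ of $\bS_\lambda(\bV)\subseteq A$ can be chosen inside this subalgebra. Order the basis of $\bV$ as $e_1,e_2,\dots,f_1,f_2,\dots$, yielding an ``$e$-first'' Borel of $\GL_{\infty\mid\infty}$. The raising operators $E_{e_i,e_j}$, $E_{e_i,f_j}$, and $E_{f_i,f_j}$ (for $i<j$) all annihilate each element of ${}_0\bV$; since each acts as a derivation on tensor powers, they annihilate every element of $\bS_\lambda({}_0\bV)\hookrightarrow\bS_\lambda(\bV)$. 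Therefore the $\GL_\infty$-highest weight vector of $\bS_\lambda({}_0\bV)$, which lives in the subalgebra $\lw(\Sym^2({}_0\bV))$, is automatically a $\GL_{\infty\mid\infty}$-highest weight vector of $\bS_\lambda(\bV)$, and we take this as $v_\lambda$.

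Since $v_\lambda$ involves only finitely many $y_{i,j}$, choose $N$ with $v_\lambda\in\lw(\Sym^2(\bC^N))$, a finite-dimensional exterior algebra whose top exterior power is one-dimensional and spanned by $y_{I_N}$. By non-degeneracy of the pairing $\lw^k\otimes\lw^{n-k}\to\lw^n$ in any finite exterior algebra, there exists $w\in\lw(\Sym^2(\bC^N))\subseteq A$ with $v_\lambda\wedge w=c\,y_{I_N}$ for some $c\ne 0$. Then $y_{I_N}\in\fa$, and by $\GL_{\infty\mid\infty}$-stability combined with the irreducibility of $\bS_{N\times(N+1)}(\bV)$, we conclude $\bS_{N\times(N+1)}(\bV)\subseteq\fa$, whence $\fp_N\subseteq\fa$ by Lemma~\ref{lem:pn-gen}. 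The main point requiring care is the identification of $v_\lambda$ as a highest weight vector sitting in the $y$-subalgebra; the finite-dimensional Poincar\'e pairing step is then entirely standard.
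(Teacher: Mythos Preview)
Your argument is correct and shares its core with the paper's: both land a nonzero element of $\fa$ inside a finite exterior algebra $\lw(\Sym^2(\bC^N))$ and then use Poincar\'e duality to reach the top-degree element $y_{I_N}=\prod_{i\le j\le N}y_{i,j}$, after which Lemma~\ref{lem:pn-gen} and Proposition~\ref{prop:unitideal} finish. The paper's route to that first step is shorter: it simply evaluates the polynomial functor $\fa$ at $\bC^{n\mid 0}$, obtaining a nonzero homogeneous ideal $\fa(\bC^n)\subseteq A(\bC^{n\mid 0})=\lw(\Sym^2(\bC^n))$, which must then contain the one-dimensional top piece. This sidesteps your highest-weight analysis entirely. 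Your approach has the virtue of being explicit about which element of $\fa$ survives, but the functorial evaluation is cleaner and avoids any Borel bookkeeping.

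One misstatement to fix: you claim the raising operators $E_{e_i,e_j}$ (for $i<j$) annihilate every element of ${}_0\bV$, but $E_{e_i,e_j}$ sends $e_j\mapsto e_i$. What you need---and what your next sentence makes clear you intend---is that the raising operators \emph{outside} $\fgl({}_0\bV)$, namely $E_{e_i,f_j}$ and $E_{f_i,f_j}$, annihilate ${}_0\bV$ and hence all of $\bS_\lambda({}_0\bV)$, while the $E_{e_i,e_j}$ annihilate the $\GL_\infty$-highest weight vector by definition. With that correction the argument goes through. (Also, the irreducibility detour at the end is unnecessary: once $y_{I_N}\in\fa$, Lemma~\ref{lem:pn-gen} gives $\fp_N\subseteq\fa$ directly, since $\fa$ is an ideal containing a generator of $\fp_N$.)
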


\begin{proof}
Pick $n$ so that $\fa(\bC^n) \ne 0$. Then $\fa(\bC^n)$ is a nonzero homogeneous ideal in the exterior algebra $\bigwedge(\Sym^2(\bC^n))$. In particular, it contains its top degree piece $\bigwedge^{n(n+1)/2} (\Sym^2(\bC^n))$ which is spanned by $y(n)$, so by Lemma~\ref{lem:pn-gen}, $\fa \supseteq \fp_n$. Now use Proposition~\ref{prop:unitideal}.
\end{proof}

\subsection{More on ideals}

The subalgebra of $A$ generated by the $x_{i,j}$ is the commutative algebra $\Sym({}_0\bV \otimes {}_1\bV)$. Given a partition $\lambda$, let $x_\lambda$ be a nonzero vector in $\bS_\lambda({}_0\bV) \otimes \bS_\lambda({}_1\bV)$ which is a highest weight vector with respect to the upper triangular matrices in $\fgl({}_0 \bV) \times \fgl({}_1 \bV)$. 

If $\ell(\lambda) \le n$, write $\lambda\{n\}$ for the partition $(\lambda_1 + n+1, \dots, \lambda_n + n+1, \lambda^\dagger)$.

\begin{lemma} \label{lem:lambda-hw}
The $\fgl(\bV)$-subrepresentation of $A$ generated by $y(n) x_\lambda$ is $\bS_{\lambda\{n\}}(\bV)$.
\end{lemma}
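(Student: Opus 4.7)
The strategy is to identify $y(n) x_\lambda$ as a nonzero element of the unique copy of $\bS_{\lambda\{n\}}(\bV)$ inside $A$; since $\bS_{\lambda\{n\}}(\bV)$ is irreducible, it will then generate the whole summand. Because the decomposition $A = \bigoplus_{\mu \in Q_1} \bS_\mu(\bV)[|\mu|/2]$ is multiplicity-free, it suffices to locate $y(n) x_\lambda$ inside a unique $\bS_\mu(\bV)$ summand, and we do so by restricting the $\fgl(\bV)$-action to the even subalgebra $\fgl({}_0\bV) \times \fgl({}_1\bV)$, which avoids any super-Borel subtleties.

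First, I compute the weight of $y(n) x_\lambda$. Summing the weight $e_i + e_j$ (or $2e_i$ when $i=j$) contributed by each factor $y_{i,j}$ of $y(n)$ shows that $y(n)$ has weight $(n+1,\ldots,n+1,0,0,\ldots)$ on ${}_0\bV$ (with $n$ copies of $n+1$), while $x_\lambda$ has weight $(\lambda,\lambda)$ on $({}_0\bV,{}_1\bV)$; hence the total weight of $y(n) x_\lambda$ is $(\alpha \mid \lambda)$ with $\alpha = \lambda + (n+1)^n$. Next, I verify that $y(n) x_\lambda$ is a $\fgl({}_0\bV) \times \fgl({}_1\bV)$-highest weight vector. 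The raising operators $E_{e_i,e_j}$ with $i<j$ annihilate $x_\lambda$ by its highest-weight property, and they also annihilate $y(n)$: such a derivation turns any $y_{k,l}$-factor containing an $e_j$ into one containing an $e_i$ (both indices still in $\{1,\ldots,n\}$), reintroducing a $y$-factor already present and killing the product since each $y_{a,b}$ squares to zero. Similarly, $E_{f_i,f_j}$ with $i<j$ kills $x_\lambda$ and acts trivially on $y(n)$. Non-vanishing of $y(n) x_\lambda$ follows by specializing to $A(\bC^{n \mid \ell(\lambda)})$, where it is manifestly a nonzero product of a top exterior element with a nonzero polynomial in the $x_{i,j}$'s.

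The main combinatorial step is to show that the $\fgl({}_0\bV)\times\fgl({}_1\bV)$-isotypic component $\bS_\alpha({}_0\bV)\otimes\bS_\lambda({}_1\bV)\subset A$ lies entirely inside $\bS_{\lambda\{n\}}(\bV)\subset A$. By the branching rule
\[
\bS_\mu(\bV)\big|_{\fgl({}_0\bV) \times \fgl({}_1\bV)} = \bigoplus_{\alpha',\beta'} c^\mu_{\alpha',\beta'^\dagger}\,\bS_{\alpha'}({}_0\bV) \otimes \bS_{\beta'}({}_1\bV),
\]
it suffices to prove that the only $\mu\in Q_1$ with $|\mu| = 2|\lambda|+n(n+1)$ and $\mu\supseteq\alpha$ is $\lambda\{n\}$. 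Writing such a $\mu$ in Frobenius coordinates with Durfee size $d$ and arm lengths $a_1>a_2>\cdots>a_d\ge 1$, we have $|\mu|=2\sum_{i=1}^d a_i$. The inclusion $\mu\supseteq\alpha$ forces $a_i\ge \lambda_i+n+1-i$ for $i\le n$ (so in particular $d\ge n$), and these lower bounds already sum to $|\lambda|+n(n+1)/2 = |\mu|/2$; since any additional $a_i$ with $i>n$ would contribute at least $1$ more, we conclude $d=n$ and every inequality is an equality, uniquely reconstructing $\mu$ as $\lambda\{n\}$.

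The main obstacle is this last combinatorial step, especially ruling out Durfee sizes $d>n$; it hinges on the strict-decrease-with-positivity of Frobenius arm lengths characterizing $Q_1$ together with the exact match between $|\mu|/2$ and the arm-length lower bounds forced by $\mu\supseteq\alpha$.
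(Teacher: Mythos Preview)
Your proof is correct and takes a genuinely different route from the paper's.

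The paper argues directly in $\fgl_{n\mid n}$: it fixes the Borel obtained by ordering the even basis vectors before the odd ones and shows that $y(n)x_\lambda$ is a highest weight vector for this \emph{super} Borel. The even raising operators are handled just as you do; the extra step is that the odd raising operators (the upper-right block, sending $f_j\mapsto e_i$) turn each $x_{i,j}$ into some $y_{k,\ell}$, and multiplying $y(n)$ by any further $y$-variable in $\bC^{n\mid n}$ gives zero. Having identified $y(n)x_\lambda$ as a super highest weight vector of weight $(\lambda+(n{+}1)^n\mid\lambda)$, the paper then invokes the dictionary in \cite[\S 3.2.2]{chengwang} between such weights and $\bS_\mu(\bC^{n\mid n})$ to read off $\mu=\lambda\{n\}$.

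You instead stay entirely on the even subalgebra $\fgl({}_0\bV)\times\fgl({}_1\bV)$, where the highest-weight check is routine, and compensate with a combinatorial step: using the branching rule together with the multiplicity-free decomposition $A=\bigoplus_{\mu\in Q_1}\bS_\mu(\bV)$, you pin down the unique $\mu\in Q_1$ of the right size with $\mu\supseteq\alpha$ via a Frobenius-coordinate count. The paper's approach is shorter once one accepts super highest weight theory and the cited dictionary; yours is self-contained given the $Q_1$ decomposition already recorded in \S\ref{sec:fm}, and trades the super machinery for classical Littlewood--Richardson combinatorics and the special arm/leg structure of $Q_1$.
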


\begin{proof}
Replace $\bV$ with $\bC^{n|n}$. Consider the upper-triangular matrices in $\fgl_{n|n}$ where we have ordered the even variables before the odd ones. We claim that $y(n) x_\lambda$ is a highest weight vector for this choice of Borel. Both $y(n)$ and $x_\lambda$ are eigenvectors for the upper-triangular matrices in $\fgl_n \times \fgl_n$, so the same is true for $y(n) x_\lambda$. The remainder of the Borel is the upper-right block, which consists of maps from ${}_1\bV$ to ${}_0\bV$. However, the action of any such matrix replaces $x_{i,j}$ with some $y_{k,\ell}$; since $y(n)$ contains the product of all of the $y$ variables, the action is $0$ on $y(n) x_\lambda$, so we conclude it is a highest weight vector. The even part of its weight is $(\lambda_1 + n+1, \dots, \lambda_n + n+1)$ and the odd part of its weight is $\lambda$, so we conclude that it generates the Schur functor $\bS_{\lambda\{n\}}(\bV)$ (see \cite[\S 3.2.2]{chengwang} for this last statement).
\end{proof}

\begin{corollary} \label{cor:ess-bound}
Suppose $\lambda \in Q_1$. If $n \times (n+1) \subseteq \lambda$ then $\fp_n$ contains $\bS_\lambda(\bV)$. 
\end{corollary}

\begin{proof}
If $n \times (n+1) \subseteq \lambda$, then $\lambda = \mu\{n'\}$ for some $n' \ge n$ and some partition $\mu$. From Lemma~\ref{lem:lambda-hw}, we see that $\bS_{n' \times (n'+1)}(\bV)$ generates $\bS_\lambda(\bV)$, and from Lemma~\ref{lem:pn-gen}, we see that $\bS_{n \times (n+1)}(\bV)$ generates $\bS_{n' \times (n'+1)}(\bV)$.
\end{proof}

\subsection{The Borel subgroup and the maximal torus} \label{ss:subgroups}

Ordering our basis of $\bV$ as $e_1, e_2, \ldots,$ and $f_1, f_2, \ldots$, we can think of elements of $\GL(\bV)$ as block matrices
\begin{displaymath}
\begin{pmatrix}
a & b \\ c & d
\end{pmatrix}.
\end{displaymath}
Let $B \subset \GL(\bV)$ be the subgroup where $a$, $c$, and $d$ are upper-triangular, and $b$ is strictly upper triangular. The determinant of such a matrix is simply the product of the determinants of $a$ and $d$. Let $\fb$ be the Lie algebra of $B$. Note that $B$ is a Borel subgroup since it is the subgroup of upper-triangular matrices with respect to the ordering $f_1 < e_1 < f_2 < e_2 < \cdots$.

Let $\bG_m$ denote the multiplicative group, and let $T=\bG_m^{\infty}$ where all but finitely many coordinates are~1. We denote elements of $T$ as $(\alpha_1, \alpha_2, \ldots)$. We regard $T$ as a subgroup of $\GL(\bV)$ by
\begin{displaymath}
\alpha \mapsto \begin{pmatrix} \alpha & 0 \\ 0 & \alpha^{-1} \end{pmatrix}.
\end{displaymath}
In other words, $\alpha \cdot e_i=\alpha_i e_i$ and $\alpha \cdot f_i=\alpha_i^{-1} f_i$. This $T$ is the maximal torus of $\Pe$, and is the intersection of $B$ and $\Pe$.

\begin{lemma} \label{lem:iwasawa}
$\fb + \fpe = \fgl$ and $\fb \cap \fpe$ is the Lie algebra of $T$.
\end{lemma}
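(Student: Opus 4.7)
My plan is to work in block form relative to the super decomposition $\bV = {}_0\bV \oplus {}_1\bV$, writing an element of $\fgl$ as $X = \begin{pmatrix} A & B \\ C & D \end{pmatrix}$. The preliminary step is to record the explicit description
\[
\fpe = \{X : D = -A^t,\ B = B^t,\ C = -C^t\},
\]
which is a direct computation from $\omega(x_{i,j}) = \delta_{i,j}$, $\omega(y_{i,j}) = \omega(z_{i,j}) = 0$, and the Leibniz rule for the $\fgl$-action on $\omega$.

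For the intersection $\fb \cap \fpe$, the argument should be immediate: if $X$ lies in both, then $A$ is upper-triangular and $-A^t$ is also upper-triangular, which forces $A$ (and hence $D=-A$) diagonal. The block $B$ is simultaneously symmetric and strictly upper-triangular, hence zero. The block $C$ is simultaneously antisymmetric and upper-triangular: its diagonal vanishes by antisymmetry, its strict lower part by the upper-triangular condition, and its strict upper part is then forced to zero by antisymmetry as well. What remains is $\operatorname{diag}(\alpha, -\alpha)$, which is exactly the Lie algebra of $T$.

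For the sum, my plan is to decompose the four blocks separately. In the $B$-position, an arbitrary matrix splits as strictly upper-triangular plus symmetric, by choosing the symmetric summand to agree with the input on and below the diagonal and propagating by symmetry. In the $C$-position, the analogous splitting is upper-triangular plus antisymmetric (antisymmetry automatically kills the diagonal). The diagonal blocks are coupled through the $\fpe$-relation $D - d = -(A - a)^t$; I would handle this by rewriting it as $D + A^t = d + a^t$ and invoking the standard decomposition of an arbitrary matrix as its upper-triangular part plus its strict lower-triangular part in order to choose $d$ upper-triangular and $a$ strictly upper-triangular.

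The only genuine obstacle I anticipate is tracking the coupling between the $A$- and $D$-blocks imposed by the $\fpe$-relation, but passing to $D + A^t$ turns it into the one-line upper/strict-lower decomposition above. Every other block assertion reduces to an elementary linear-algebra identity about the interaction of upper-triangular matrices with the symmetric or antisymmetric condition.
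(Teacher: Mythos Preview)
Your argument is correct. For the intersection $\fb \cap \fpe$ you do essentially what the paper does (the paper passes to the finite truncation $\fgl_{n\mid n}$ first, but the linear algebra is identical). For the sum $\fb + \fpe$, however, you take a genuinely different route: you produce an explicit block-by-block splitting of an arbitrary $X \in \fgl$ into a $\fb$-part and a $\fpe$-part, whereas the paper simply reduces to $\fgl_{n\mid n}$ and verifies the dimension identity
\[
\dim \fpe_n + \dim \fb_n - \dim(\fb_n \cap \fpe_n) = 2n^2 + (2n^2+n) - n = 4n^2 = \dim \fgl_{n\mid n}.
\]
The paper's dimension count is shorter and avoids any case analysis, but it requires the reduction to finite $n$ (harmless here, since $\fgl = \bigcup_n \fgl_{n\mid n}$). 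Your explicit decomposition is a bit longer but is constructive and works uniformly without invoking finite dimensionality; it also makes transparent \emph{why} the sum fills $\fgl$, rather than inferring it numerically.
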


\begin{proof}
It suffices to prove that $\fb_n + \fpe_n = \fgl_{n|n}$ for all $n$ where $\fgl_{n|n} = \End(\bC^{n|n})$ and $\fb_n$ and $\fpe_n$ are subalgebras of $\fgl_{n|n}$ defined in an analogous way as $\fb$ and $\fpe$. We can do this by a dimension count. First, we remark that $\fpe_n$ consists of matrices of the form $\begin{pmatrix} a & b \\ c & -a^T \end{pmatrix}$ where $a,b,c$ are $n \times n$ matrices with $b=b^T$ and $c=-c^T$ \cite[\S 1.1.5, equation (1.14)]{chengwang}. So $\fb_n \cap \fpe_n$ consists of matrices of the form $\begin{pmatrix} a & 0 \\ 0 & -a \end{pmatrix}$ where $a$ is a diagonal matrix. We see that 
\[
\dim(\fpe_n + \fb_n) = \dim(\fpe_n) + \dim(\fb_n) - n = 2n^2 + (2n^2+n) - n = 4n^2 = \dim(\fgl_{n|n}),
\]
so $\fpe_n + \fb_n = \fgl_{n|n}$.
\end{proof}

\begin{corollary} \label{cor:pe-gens}
Let $V$ be a $\fgl$-representation. Let $\{x_i\}$ be a complete set of highest weight vectors for $V$ with respect to the Borel subalgebra $\fb$. Then $\{x_i\}$ generate $V$ as a $\fpe$-representation.
\end{corollary}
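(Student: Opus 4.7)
The plan is to combine the decomposition $\fgl = \fpe + \fb$ from Lemma \ref{lem:iwasawa} with the super PBW theorem. My first step would be to establish the factorization
\[
U(\fgl) \;=\; U(\fpe) \cdot U(\fb).
\]
To do so, I would pick a homogeneous subspace $W \subseteq \fb$ complementary to $\fb \cap \fpe$, giving a vector-space decomposition $\fgl = \fpe \oplus W$, and then apply super PBW to an ordered homogeneous basis that lists the $\fpe$-basis elements first and the $W$-basis elements second. This yields $U(\fgl) \subseteq U(\fpe) \cdot U(\fb)$ (since $W \subseteq \fb$); the reverse inclusion is automatic.

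Next, I would use that each $x_i$, as a highest weight vector for $\fb$, is annihilated by the nilradical of $\fb$ and is an eigenvector for its Cartan part, so $U(\fb) \cdot x_i \subseteq \bC \cdot x_i$. Combining with the PBW factorization gives
\[
U(\fgl) \cdot x_i \;\subseteq\; U(\fpe) \cdot U(\fb) \cdot x_i \;\subseteq\; U(\fpe) \cdot x_i,
\]
with the reverse inclusion obvious. Hence the $\fpe$-submodule and the $\fgl$-submodule of $V$ generated by $\{x_i\}$ coincide; since $\{x_i\}$ is a complete set of highest weight vectors, it generates $V$ as a $\fgl$-representation and therefore also as a $\fpe$-representation.

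The only step requiring genuine care is the super PBW factorization: the complementary subspace $W$ must be chosen to consist of homogeneous elements, and the ordering of the basis must respect the rule that odd elements appear at most once in a PBW monomial. Once this bookkeeping is in place the remainder of the argument is purely formal, so I do not anticipate a serious obstacle.
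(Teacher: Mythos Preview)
Your argument is correct. The key ingredients --- the decomposition $\fgl = \fpe + \fb$, the fact that $U(\fb)\cdot x_i \subseteq \bC x_i$ for a highest weight vector, and the super PBW factorization $U(\fgl) = U(\fpe)\cdot U(\fb)$ obtained from a homogeneous complement $W \subseteq \fb$ to $\fb\cap\fpe$ --- all hold as you describe, and the deduction is clean.

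The paper takes a different, more hands-on route: rather than invoking PBW, it proves directly by a double induction that any monomial $a_1\cdots a_r x_i$ with $a_j \in \fgl$ lies in $\cU(\fpe)\cdot x_i$. The inductive step writes $a_1 = b + p$ with $b\in\fb$, $p\in\fpe$, and then commutes $b$ past the $\fpe$-factors using $[b,p_1] = b' + p'$, reducing the length at each stage. In effect the paper is re-deriving, in this specific situation, the straightening that PBW packages once and for all. Your approach is shorter and more structural; the paper's is more elementary in that it avoids quoting PBW (which, in the super and infinite-dimensional setting, some readers might want to see justified). Both rest on the same two facts from Lemma~\ref{lem:iwasawa} and the highest-weight condition, so the difference is purely one of presentation.
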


\begin{proof}
Every $v \in V$ is a linear combination of $a_1 \cdots a_r x_i$ where $a_i \in \fgl$. By induction on $r$, we will show that this belongs to $\cU(\fpe) x_i$. If $r=0$, there is nothing to show; if $r>0$, write $a_2 \cdots a_r x_i$ as a linear combination of $p_1 \cdots p_s x_i$ with $p_i \in \fpe$. It suffices to show that $a_1 p_1 \cdots p_s x_i \in \cU(\fpe) x_i$, which we will do by induction on $s$. Write $a_1 = b + p$ where $b \in \fb$ and $p \in \fpe$. If $s=0$, then we have $a_1 x_i = bx_i + px_i$; the second term is in $\cU(\fpe) x_i$ by definition, and the first term is a scalar multiple of $x_i$ since it is a highest weight vector, so $a_1 x_i \in \cU(\fpe) x_i$. If $s>0$, write $[b,p_1] = b' + p'$ where $b' \in \fb$ and $p' \in \fpe$. Then we have
\[
bp_1 \cdots p_s x_i = p_1 bp_2 \cdots p_s x_i + b' p_2 \cdots p_s x_i + p' p_2 \cdots p_s x_i.
\]
Now the first and second terms are in $\cU(\fpe) x_i$ by induction on $s$, and the last term is in $\cU(\fpe)x_i$ by definition, so we are done.
\end{proof}

\section{Stable representation theory of the periplectic group}
\label{sec:Perep}

We say that a representation of $\fpe$ is {\bf algebraic} if it appears as a subquotient of a finite direct sum of the spaces $T_n =\bV^{\otimes n}$ and $T_n[1]$.\footnote{The ``restricted dual'' $\bV_*$ is isomorphic to $\bV$ as a representation of $\fpe$, so one does not get anything new by considering mixed tensors.} We write $\Rep(\fpe)$ for the category of algebraic representations. The category $\Rep(\fpe)$ is closed under tensor products. Serganova \cite{serganova} has determined the structure of this category, and in this section we summarize the results and recast them in the style of \cite{infrank}. We remark that one of the conclusions of \cite{serganova}, namely that $\Rep(\fpe)$ is equivalent to $\Rep(\bO)$, is incorrect, see Remark~\ref{rmk:error}.

In \cite[(4.2.5)]{infrank}, we defined the downwards Brauer category, and in \cite[(4.2.11)]{infrank} we defined a signed variant. Here we introduce a different signed variant of this category, which we simply denote by $\cC$. It is defined as follows:
\begin{itemize}
\item The objects of $\cC$ are finite sets.
\item The space of morphisms $\uHom_{\cC}(L, L')$ is the super vector space spanned by pairs $(\Gamma, f)$, where $\Gamma$ is a matching on $L$ equipped with an orientation on its edge set (i.e., a total ordering modulo the action of even permutations) and $f$ is a bijection $L \setminus V(\Gamma) \to L'$, modulo the relations $(\Gamma, f)=-(\Gamma', f)$ if $\Gamma'$ is obtained from $\Gamma$ by reversing the orientation on the edge set. The (super) degree of $(\Gamma, f)$ is the number of edges in $\Gamma$.
\item The composition of $(\Gamma, f) \colon L \to L'$ and $(\Gamma', f') \colon L' \to L''$ is $(\Gamma \cup f^{-1}(\Gamma'), f' \circ f)$, where the orientation on the edge set of $\Gamma \cup f^{-1}(\Gamma')$ is the one obtained by putting the edges of $\Gamma$ before those of $f^{-1}(\Gamma')$.
\end{itemize}
We write $\Mod_{\cC}$ for the category of enriched functors $M \colon \cC \to \SVec$.

We now define an object $\cK$ of $\Mod_{\cC}$. For a finite set $L$, we put $\cK_L=\bV^{\otimes L}$. For a morphism $(\Gamma, f) \colon L \to L'$, we define $\cK_L \to \cK_{L'}$ by applying the pairing $\omega$ to the tensor factors paired by $\Gamma$ and using $f$ on the remaining tensor factors. Each $\cK_L$ belongs to $\Rep(\fpe)$, and the maps $\cK_L \to \cK_{L'}$ are maps of $\fpe$-representations, so $\cK$ can be considered as a representation of $\cC$ in the category $\Rep(\fpe)$. We therefore obtain a functor
\begin{displaymath}
\Phi \colon \Mod_{\cC}^{\rf} \to \Rep(\fpe), \qquad \Phi(M)=\Hom_{\cC}(M, \cK)
\end{displaymath}
and a functor
\begin{displaymath}
\Psi \colon \Rep(\fpe)^{\rf} \to \Mod_{\cC}, \qquad \Psi(N)=\Hom_{\fpe}(N, \cK),
\end{displaymath}
as in \cite[(2.1.10)]{infrank}. Here $(-)^{\rf}$ denotes the full subcategory of finite length objects.

\begin{theorem} \label{thm:PeBrauer}
The functors $\Phi$ and $\Psi$ are mutually quasi-inverse contravariant equivalences between $\Mod_{\cC}^{\rf}$ and $\Rep(\fpe)^{\rf}$.
\end{theorem}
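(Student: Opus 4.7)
The plan is to follow the Schur--Weyl-duality template developed in \cite[\S 2.1]{infrank} for establishing such contravariant equivalences, substituting Serganova's structural results \cite{serganova} on $\Rep(\fpe)$ for the orthogonal-group input used there. The proof has three main movements: compute $\Phi$ on the representable functors of $\cC$; establish a Schur--Weyl duality identifying morphisms in $\cC$ with $\fpe$-equivariant maps between tensor powers of $\bV$; and match simple objects on the two sides.

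First I would unwind $\Phi$ on generators. For a finite set $L$, let $h^L = \Hom_\cC(L,-) \in \Mod_\cC$; by Yoneda, $\Phi(h^L) = \cK_L = \bV^{\otimes L}$. The $h^L$ form a family of projective generators of $\Mod_\cC$, and the $\bV^{\otimes n}$ (together with their super shifts) generate $\Rep(\fpe)$ by definition. Every object of $\Mod_\cC^\rf$ has a presentation by finite direct sums of representables, so once I know $\Phi$ is fully faithful on representables and that the essential image cogenerates $\Rep(\fpe)^\rf$, a standard devissage promotes this to an equivalence $\Mod_\cC^\rf \to (\Rep(\fpe)^\rf)^{\mathrm{op}}$, with $\Psi$ playing the role of the quasi-inverse.

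The crucial step is the Schur--Weyl duality statement: the ``contract a matching, then relabel'' map
\[
\Hom_\cC(L, L') \longrightarrow \Hom_\fpe(\bV^{\otimes L}, \bV^{\otimes L'})
\]
must be an isomorphism. Injectivity can be established by a stable rank argument: because $\bV$ is infinite dimensional and $\omega$ is nondegenerate, distinct oriented matchings $(\Gamma, f)$ give operators whose values on sufficiently generic basis tensors in $\bV^{\otimes L}$ are linearly independent. Surjectivity should follow by stabilizing the finite periplectic Schur--Weyl duality for $\fpe_n$ on $(\bC^{n\mid n})^{\otimes k}$, whose centralizer is (a quotient of) a signed Brauer algebra; here the footnote $\bV_* \cong \bV$ ensures that no mixed tensors arise, so all invariants are genuinely captured by matchings. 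A parity count on the $(\Gamma, f)$ shows that the source super-degree matches the odd-parity shifts coming from $\omega$, so the signs in $\cC$ are consistent with composition.

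Finally I would match simples. The ``lowest cardinality nonzero piece'' construction, as in \cite[\S 4]{infrank}, gives a bijection between simples of $\Mod_\cC^\rf$ and partitions via irreducible $\symm{n}$-representations, while Serganova's classification provides the same partition indexing on the $\Rep(\fpe)$ side. Comparing leading terms under $\Phi$ shows the bijection is compatible. Both $\Phi$ and $\Psi$ are left exact as Hom-into-a-fixed-object functors, and this, combined with the identification on simples, closes the devissage. I expect the main obstacle to be the Schur--Weyl step: isolating the $\fpe$-centralizer of $\bV^{\otimes n}$ requires careful bookkeeping of the edge-orientation signs in $\cC$ and the parity shifts from $\omega$ being odd, and one needs Serganova's results in an essential way to ensure that the stable limit kills any leftover relations among matchings that could survive at finite rank.
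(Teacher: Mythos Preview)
Your outline is correct and sits inside the same framework as the paper—both invoke the contravariant-equivalence machinery of \cite[\S 2.1]{infrank}. The difference is in how the hypotheses are discharged. The paper applies the criterion \cite[(2.1.11)]{infrank} as a black box: part~(a) is checked via \cite[Proposition~3(d)]{serganova} and \cite[Lemma~17]{serganova}, and part~(b) by reading off from \cite[Proposition~3(d)]{serganova} that $\Hom_{\fpe}(V^{\lambda}, \bV^{\otimes n})$ is the Specht module $\bM_\lambda$ when $|\lambda|=n$ and vanishes otherwise. In other words, the paper never proves the Schur--Weyl centralizer isomorphism $\Hom_\cC(L,L')\cong\Hom_{\fpe}(\bV^{\otimes L},\bV^{\otimes L'})$ directly; Serganova's socle description of $\bV^{\otimes n}$ stands in for the surjectivity half.

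Your route instead attacks that centralizer isomorphism head-on by stabilizing finite-rank periplectic Schur--Weyl duality. This is valid, but it imports a separate nontrivial input (the finite $\fpe_n$ centralizer theorem, due to Moon and others) that the paper avoids entirely by leaning on Serganova. So the paper's path is shorter and uses only the $\Rep(\fpe)$ structure theory; yours is more self-contained on the $\fpe$ side but needs an external Schur--Weyl result. One point to make explicit in your devissage: promoting full faithfulness on representables to an equivalence on all finite-length objects uses that the $\bV^{\otimes L}$ are \emph{injective} in $\Rep(\fpe)^{\rf}$ (equivalently Proposition~\ref{prop:pe-inj}(b)), not merely that they cogenerate.
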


\begin{proof}
We apply the criterion of \cite[(2.1.11)]{infrank}. (We are not exactly in the situation discussed there, but the same criterion and proof still apply.) Part~(a) follows from \cite[Proposition~3(d)]{serganova} and \cite[Lemma~17]{serganova}. For Part~(b), consider a simple object $V^{\lambda}$ of $\Rep(\fpe)$, in the notation of \cite{serganova}. Suppose that the partition $\lambda$ is of size $n$. Then $\Hom_{\fpe}(V^{\lambda}, \cK_{[n]})$ is the Specht module $\bM_{\lambda}$ by \cite[Proposition~3(d)]{serganova}. Furthermore, if $m \ne n$ then $\Hom_{\fpe}(V^{\lambda}, \cK_{[m]})=0$ by \cite[Proposition~3(d)]{serganova}, since then the socle of $\cK_{[m]}$ has no copy of $V^{\lambda}$ in it.
\end{proof}

\begin{proposition} \label{prop:pe-inj}
We have the following:
\begin{enumerate}[\indent \rm (a)]
\item The $\bS_{\lambda}(\bV)$ are finite length representations of $\fpe$.
\item The $\bS_{\lambda}(\bV)$ are exactly the indecomposable injective objects of $\Rep(\fpe)^\rf$.
\item Every object of $\Rep(\fpe)^{\rf}$ has a finite length resolution by finite length injective objects $($i.e., finite sums of indecomposable injectives$)$.
\end{enumerate}
\end{proposition}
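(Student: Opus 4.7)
The strategy is to prove all three parts by transporting them across the contravariant equivalence $\Phi \colon \Mod_\cC^\rf \to \Rep(\fpe)^\rf$ of Theorem~\ref{thm:PeBrauer}. Let $h^L := \Hom_\cC(L,-)$ denote the representable functor; it is projective in $\Mod_\cC$ by Yoneda, and the direct computation $\Phi(h^L) = \Hom_\cC(h^L, \cK) = \cK_L = \bV^{\otimes L}$ provides the key bridge. Under $\Phi$, properties of $h^{[n]}$ (finite length, projectivity, endomorphism ring) pass to the corresponding properties (finite length, injectivity, endomorphism ring) of $\bV^{\otimes n}$, and the statements about $\bS_\lambda(\bV)$ will follow by extracting direct summands via Schur--Weyl duality.

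For (a), I would first check that $h^{[n]}$ is finite length in $\Mod_\cC$: the definition of the morphisms in $\cC$ forces $h^{[n]}(L') = 0$ unless $|L'| \le n$ and $|L'| \equiv n \pmod 2$, and each nonzero value is finite-dimensional, so $h^{[n]}$ is supported on only finitely many isomorphism classes of finite sets with finite-dimensional values. Applying $\Phi$ yields finite length of $\bV^{\otimes n}$ in $\Rep(\fpe)$, and since Schur--Weyl duality exhibits $\bS_\lambda(\bV)$ as an $\fgl$-summand (hence an $\fpe$-summand) of $\bV^{\otimes |\lambda|}$, it is finite length too. For (b), the contravariant equivalence swaps projectives and injectives, so $\bV^{\otimes n}$ and each of its summands $\bS_\lambda(\bV)$ are injective. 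For indecomposability, the Schur--Weyl decomposition $\bV^{\otimes n} = \bigoplus_{|\lambda|=n} \bS_\lambda(\bV) \otimes \spcht{\lambda}$ corresponds under $\Phi$ to the decomposition of $h^{[n]}$ into Specht-isotypic summands under the right $\symm_n$-action on $[n]$; the $\lambda$-isotypic summand of $h^{[n]}$ has endomorphism ring $\bC$, forcing indecomposability of $\bS_\lambda(\bV)$. To see these exhaust the indecomposable injectives, note that any indecomposable injective of $\Rep(\fpe)^\rf$ pulls back to an indecomposable finite-length projective in $\Mod_\cC^\rf$; since it is supported in some finite set of degrees, it admits a generator in some degree $n$, and therefore appears as a summand of $h^{[n]}$, hence is one of the above.

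For (c), via $\Phi$ it suffices to resolve any $M \in \Mod_\cC^\rf$ by a bounded complex of finite direct sums of representables. Given such $M$, pick a surjection $\bigoplus h^{L_i} \twoheadrightarrow M$ with finitely many terms (possible by the finite length of $M$) and iterate on the kernel. The main obstacle is termination: the kernel of a surjection between representables need not live in strictly smaller degrees, so a crude induction on support fails. I expect this to be overcome using a highest-weight or standard filtration structure on $\cC$, along the lines of \cite[\S 4]{infrank} for the $\bO_\infty$ analog, which bounds the projective dimensions of the simple $\cC$-modules and, by dévissage on composition length, of all finite length modules.
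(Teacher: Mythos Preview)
Your approach is correct and, for part (c), coincides with the paper's primary argument via Theorem~\ref{thm:PeBrauer}. For parts (a) and (b) you take a genuinely different route: the paper cites Serganova directly---\cite[Theorem~8]{serganova} for (b) (that $\bS_\lambda(\bV)$ is the injective envelope of the simple $V^\lambda$) and an induction based on \cite[Lemma~17]{serganova} for (a)---whereas you deduce both uniformly from the equivalence $\Phi$. Your route has the advantage of being self-contained once Theorem~\ref{thm:PeBrauer} is in hand, though of course that theorem itself rests on Serganova's results; the paper's route is shorter because it simply quotes the needed structural facts. One small imprecision: the $\lambda$-isotypic summand of $h^{[n]}$ has endomorphism ring $M_d(\bC)$ with $d=\dim\spcht{\lambda}$, not $\bC$; it is the indecomposable summand $P_\lambda = h^{[n]}\otimes_{\bC[\symm_n]}\spcht{\lambda}$ whose endomorphism ring is $\bC$, and that is what corresponds to $\bS_\lambda(\bV)$ under $\Phi$.

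Your worry about termination in (c) is overstated, and no highest-weight machinery is needed. The point is simply that morphisms in $\cC$ go from $[n]$ to $[m]$ only when $m\le n$, so $h^{[n]}$ is supported in degrees $\le n$ and $P_\lambda([n])=\spcht{\lambda}$. Hence for the simple $S_\lambda$ (concentrated in degree $n=\lvert\lambda\rvert$) the surjection $P_\lambda\twoheadrightarrow S_\lambda$ is an isomorphism in degree $n$, and its kernel is supported in degrees $\le n-2$. Induction on the top degree then bounds the projective dimension of every simple, and d\'evissage on composition length handles arbitrary finite-length $M$. This is exactly what the paper means by ``clear from the simple form of $\cC$.''
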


\begin{proof}  One can deduce from \cite[Lemma~17]{serganova} that the quotient of $\bS_{\lambda}(\bV)$ by its socle injects into a finite sum of $\bS_{\mu}(\bV)$'s with $\mu$ smaller than $\lambda$. An easy inductive argument using this proves (a).  (b) follows from \cite[Theorem~8]{serganova}, which states that $\bS_{\lambda}(\bV)$ is the injective envelope of the simple corresponding to $\lambda$. (c) follows from Theorem~\ref{thm:PeBrauer}, as the corresponding statement for $\Mod_{\cC}$ is clear from the simple form of $\cC$. 
\end{proof}

\begin{proposition} \label{prop:wedgesym2-diagram}
The category of finite length supermodules over $\Sym(\Sym^2( \bV)[1])$ and $\Mod_\cC^\rf$ are equivalent. In particular, it is also equivalent to  $\Rep(\fpe)^\rf$.
\end{proposition}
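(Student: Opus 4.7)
The plan is to leverage Theorem~\ref{thm:PeBrauer}, which already provides the equivalence $\Rep(\fpe)^\rf \simeq \Mod_\cC^\rf$; it therefore suffices to establish just one of the two claimed equivalences, and the other follows automatically. I would establish an equivalence between the category of finite length $A$-supermodules and $\Rep(\fpe)^\rf$ directly, as this makes the cleanest use of the material already set up.

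The key input is the counit $\epsilon \colon A \to \bC$ induced by $\omega$. Since $\fpe$ is by definition the stabilizer of $\omega$, the map $\epsilon$ is $\fpe$-equivariant with kernel $\fm$. Pullback along $\epsilon$ therefore equips any algebraic $\fpe$-representation $V$ with a natural $A$-supermodule structure (in which $\fm$ acts as zero), defining a functor $\Psi$ from $\Rep(\fpe)^\rf$ to the category of finite length $A$-supermodules. In the reverse direction, restricting the $\fgl$-action along the inclusion $\fpe \hookrightarrow \fgl$ gives a functor $\Phi$ in the opposite direction, and the composition $\Phi \circ \Psi$ is manifestly the identity on $\Rep(\fpe)^\rf$.

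The substantive step is $\Psi \circ \Phi \cong \mathrm{id}$, equivalently the assertion that on any finite length $A$-supermodule $M$ the action of $\fm$ is zero. I would prove this by d\'evissage on the length. For a simple $M$, an $A$-module structure amounts to a $\fpe$-equivariant map $\Sym^2(\bV)[1] \to \End(M)$; Schur's lemma together with the Brauer-category description of the $\fpe$-endomorphism algebra provided by Theorem~\ref{thm:PeBrauer} should force this space to be one-dimensional and spanned by the map induced by $\epsilon$, so that the $A$-action agrees with the one coming from $\Psi$. For higher length modules one extends by induction, using Corollary~\ref{cor:ann} and Proposition~\ref{prop:unitideal} to constrain the possible $\fpe$-stable annihilators.

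The main obstacle is this endomorphism computation: precisely ruling out $\fpe$-equivariant $A$-actions beyond the pullback of $\epsilon$ on a simple object. A cleaner alternative is to imitate the strategy of Theorem~\ref{thm:PeBrauer} itself: construct a $\cC$-module object $\cK^A$ in the category of finite length $A$-supermodules by setting $\cK^A_L = \bV^{\otimes L}$ with $A$-action via $\epsilon$, define $\Phi$ and $\Psi$ via $\Hom_A(-, \cK^A)$ and $\uHom_\cC(-, \cK^A)$, and verify the hypothesis of \cite[(2.1.11)]{infrank}. This is essentially parallel to the proof of Theorem~\ref{thm:PeBrauer} itself, with the only extra ingredient being an identification of the indecomposable injectives of the category in analogy with Proposition~\ref{prop:pe-inj}(b).
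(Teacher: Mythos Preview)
Your approach has a genuine gap at its foundation. An object of $\Mod_A$ is by definition a module object in $\cV$: a polynomial representation of $\GL_{\infty\mid\infty}$ together with a $\GL$-equivariant $\vert A\vert$-action. Your functor $\Psi$ is not well-defined, because a general algebraic $\fpe$-representation carries no $\GL$-structure --- the simple objects $V^\lambda$ of $\Rep(\fpe)$ are $\fpe$-subrepresentations of $\bS_\lambda(\bV)$ that are not $\GL$-stable --- and the algebra map $A\to\bC$ induced by $\omega$ is only $\fpe$-equivariant, not $\GL$-equivariant, so pullback along it does not produce an object of $\Mod_A$. The same defect afflicts your alternative $\cK^A_L=\bV^{\otimes L}$ with action via this map.

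Conversely, your $\Phi$ discards the $A$-module structure entirely and merely restricts the $\GL$-action to $\fpe$, so it cannot be faithful. The central claim that $\fm$ acts by zero on every finite length $A$-module is false. Let $\fa\subset A$ be the ideal of elements of central degree $\ge 4$ and set $M=A/\fa$; then $M\cong\bC\oplus\Sym^2(\bV)[1]$ as a $\GL$-representation, hence has length two, yet $(x_{1,1}-1)\cdot 1=x_{1,1}-1\neq 0$ in $M$. This $M$ and the module with the same underlying representation but trivial action of positive-degree elements are non-isomorphic finite length $A$-modules with identical restriction to $\fpe$, so your $\Phi$ collapses them.

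The paper's argument does not pass through $\fpe$ for this step at all. One proves $\Mod_A^{\rf}\simeq\Mod_\cC^{\rf}$ directly by Schur--Weyl duality: a finite length object of $\cV$ is encoded by the sequence of symmetric-group modules $L\mapsto\Hom_{\GL}(\bV^{\otimes L},M)$, and an $A$-module structure --- a $\GL$-equivariant map $\Sym^2(\bV)[1]\otimes M\to M$ --- translates under this correspondence into precisely the edge-maps that make this sequence a $\cC$-module. This is the signed variant of \cite[(2.4.1)]{infrank} the paper invokes. The equivalence with $\Rep(\fpe)^{\rf}$ is then deduced from Theorem~\ref{thm:PeBrauer}, in the opposite logical order from the one you attempt.
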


\begin{proof}
This follows from a signed variant of \cite[(2.4.1)]{infrank}.
\end{proof}

\begin{remark} \label{rmk:error}
Serganova claims in \cite[Theorem 9]{serganova} (without proof) that there is an equivalence between $\Rep(\fpe)$ and $\Rep(\bO)$ (both are categories of supermodules). In this remark, we explain that no such equivalence exists (even ignoring the tensor structure).

First, the existence of such an equivalence implies that the subcategories of finite length objects are also equivalent. By Proposition~\ref{prop:wedgesym2-diagram}, $\Rep(\fpe)^\rf$ is equivalent to the category of finite length supermodules over $\bigwedge(\Sym^2 \bV)$ and by \cite[Theorem 4.3.1]{infrank}, $\Rep(\bO)^\rf$ is equivalent to the category of finite length supermodules of $\Sym(\Sym^2 \bV)$. So in our language, this would be an equivalence between the categories of finite length supermodules of the tca $\Sym(\Sym^2 \bV)$ and the skew tca $\bigwedge(\Sym^2 \bV)$. Using the Koszul complex, we find
\begin{align*}
\ext^i_{\Sym(\Sym^2)}(\bS_\lambda, \bS_\mu) &\cong \hom_{\GL}(\bS_\mu, \bS_\lambda \otimes \bigwedge^i(\Sym^2))\\
\ext^i_{\bigwedge(\Sym^2)}(\bS_\lambda, \bS_\mu) &\cong \hom_{\GL}(\bS_\mu, \bS_\lambda \otimes \Sym^i(\Sym^2)).
\end{align*}
We claim that any equivalence has to send $\bS_\emptyset$ either to itself or its parity shift $\bS_\emptyset[1]$. First, if $\mu$ is a partition with the property that $\ext^\bullet_{\Sym(\Sym^2)}(\bS_\lambda, \bS_\mu) = 0$ for all $\lambda$, then $\mu$ is a single column partition of the form $(1^d)$ for some $d \ge 0$. Hence any simple solution $M$ to $\ext^\bullet_{\Sym(\Sym^2)}(\bS_\lambda, M) = 0$ for all $\bS_\lambda$ must be either $\bS_{(1^d)}$ or $\bS_{(1^d)}[1]$. If $d>0$, then there are two solutions to $\ext^2_{\Sym(\Sym^2)}(\bS_\lambda, \bS_{1^d}) \ne 0$, namely $\lambda \in \{(3,1^{d-1}), (2,1^d)\}$, and if $d=0$ there is only one solution. Similarly, any simple solution $M$ to $\ext^\bullet_{\bigwedge(\Sym^2)}(\bS_\lambda, M) = 0$ for all $\lambda$ must be $\bS_d$ or $\bS_d[1]$ for some $d \ge 0$. If $d>0$, then $\ext^2_{\bigwedge(\Sym^2)}(\bS_\lambda, \bS_d) \ne 0$ has two solutions, and if $d=0$ there is only one solution. In particular, $\bS_\emptyset$ is either sent to itself or $\bS_\emptyset[1]$. Since the parity change functor is an equivalence in Serganova's setup, we may as well compose with it if needed to assume that the proposed equivalence sends $\bS_\emptyset$ to itself.

Next, notice that there are exactly two simple solutions $M$ to $\ext^2_{\Sym(\Sym^2)}(\bS_\emptyset, M) \ne 0$, namely $\bigwedge^2(\Sym^2) \cong \bS_{2,1,1}$ or its parity shift. On the other hand, there are exactly four simple solutions $M$ to $\ext^2_{\bigwedge(\Sym^2)}(\bS_\emptyset, M) \ne 0$, namely those appearing in $\Sym^2(\Sym^2) \cong \bS_{4} \oplus \bS_{2,2}$ or their parity shifts. As we just said, the equivalence preserves $\bS_\emptyset$, and since it takes simple objects to simple objects, and preserves extension groups, we conclude that {\it no equivalence between $\Sym(\Sym^2)$ and $\bigwedge(\Sym^2)$ exists.}
\end{remark}

\section{The generic category} \label{sec:serre-quotient}

We now define a notion of ``torsion'' for $A$-modules. We begin with a variant of Nakayama's lemma. Recall that $S$ is the set of super homogeneous elements of $A \setminus \fm$.

\begin{lemma} \label{lem:nakayama}
Let $M$ be a finitely generated $A$-module such that $M=\fm M$ (with equality as $|A|$-modules). Then $S^{-1}M=0$.
\end{lemma}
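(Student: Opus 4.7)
I would prove this by reducing to classical Nakayama at each finite level $\bC^{R|R}$. Write $M = A \cdot W$ for a finite $\cV$-generating direct sum $W = \bigoplus_i \bS_{\lambda_i}(\bV)[e_i]$ of (possibly shifted) Schur functors; a short computation identifies $\fm M$ with the $\bC$-span of $\{cw : c \in \fm,\ w \in W\}$, so the hypothesis $M = \fm M$ amounts to $W \subseteq \fm \cdot W$. Set $A_R := A(\bC^{R|R})$, $\fm_R := \fm \cap A_R$, $W_R := W(\bC^{R|R})$, and $M_R := A_R \cdot W_R$, so that $M = \bigcup_R M_R$ and each $M_R$ is finitely generated over the Noetherian super-commutative ring $A_R$.

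The key step is to upgrade the hypothesis to $W_R \subseteq \fm_R \cdot W_R$ for $R$ large. By the finite-level version of Corollary~\ref{cor:pe-gens} (proved by the same argument using Lemma~\ref{lem:iwasawa}), $W_R = \cU(\fpe_R) \cdot \{w_1, \ldots, w_r\}$, where $w_1, \ldots, w_r$ are the $\fb$-highest-weight vectors of $W$. Choosing $R$ large enough that the finitely many witnesses $w_i = \sum_j c_{ij} v_{ij}$ (with $c_{ij} \in \fm$, $v_{ij} \in W$) of $w_i \in \fm \cdot W$ all live at level $R$ gives $w_i \in \fm_R \cdot W_R$ for each $i$. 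I then bootstrap via the Leibniz rule and the $\fpe$-stability of $\fm$: for $p \in \fpe_R$, $c \in \fm_R$, $w \in W_R$, one has $p(cw) = (p \cdot c)\,w + c\,(p \cdot w)$ with $p \cdot c \in \fm_R$ and $p \cdot w \in W_R$, so iterating over $\cU(\fpe_R)$ preserves membership in $\fm_R \cdot W_R$. Hence $W_R \subseteq \fm_R \cdot W_R$ and thus $M_R = A_R \cdot W_R \subseteq \fm_R \cdot M_R$.

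Classical Nakayama applied in the Noetherian local ring $(A_R)_{\fm_R}$ now forces $(M_R)_{\fm_R} = 0$; equivalently, every $m \in M_R$ is annihilated by some $s \in A_R \setminus \fm_R$. A routine super-grading argument (if $sm = 0$ with $s = s_0 + s_1$, then for any super-homogeneous $m$ both $s_0 m$ and $s_1 m$ vanish separately) lets us take $s$ in the super-degree-$0$ part $A_{R,0} \setminus \fm_{R,0} \subseteq S$. Since $M = \bigcup_R M_R$, every element of $M$ is killed by some element of $S$, so $S^{-1} M = 0$. The essential content of the argument is the Leibniz bootstrapping step, which turns the infinite-dimensional Schur-functor generators of $M$ into honest finite-level Nakayama input via the $\fpe$-equivariance of $\fm$; without this, classical Nakayama could not be applied directly since $M$ need not be finitely generated as an $\vert A \vert$-module.
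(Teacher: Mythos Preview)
Your proof is correct and follows essentially the same strategy as the paper: pick highest-weight generators via Corollary~\ref{cor:pe-gens}, pass to a finite level $\bC^{R|R}$, use $\fpe_R$-stability of $\fm_R$ to bootstrap the relation $w_i \in \fm_R \cdot W_R$ to all of $W_R$, and then apply classical Nakayama. The only cosmetic differences are that you first rewrite $\fm M$ as $\fm W$ (the paper works directly with $n_{ij}\in M$) and that you make the super-grading reduction to $s_0\in S$ explicit, whereas the paper absorbs this into the citation of noncommutative Nakayama.
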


\begin{proof}
Let $V \subset M$ be a finite length $\GL$-subrepresentation generating $M$ as an $A$-module. Pick $m_1, \ldots, m_k \in V$ such that the $m_i$ generate $V(\bC^{N\mid N})$ as a $\fpe_N$-representation for all $N \gg 0$ (they exist by Corollary~\ref{cor:pe-gens}). Write $m_i=\sum_i a_{i,j} n_{i,j}$ where $a_{i,j} \in \fm$ and $n_{i,j} \in M$. Let $N \gg 0$ be large enough so that the $m_i$ and the $n_{i,j}$ belong to $M'=M(\bC^{N\mid N})$ and the $a_{i,j}$ belong to $A'=A(\bC^{N \mid N})$. Let $V'=V(\bC^{N\mid N})$, let $\fm'=\fm(\bC^{N\mid N})$, and let $S'$ be the super homogeneous elements of $A'$ not in $\fm'$ (they all have super degree~0). Then $M'$ is an $A'$-module and generated (ignoring any group action) by $V'$. We have $m_i \in \fm' M'$ for all $i$, and so $gm_i \in \fm' M'$ for any $g \in \fpe_N$, since $\fm'$ is $\fpe_N$-stable. Thus $V' \subset \fm' M'$ and so $M'=\fm' M'$. Thus, by the usual version of Nakayama's lemma \cite[(4.22)]{lam}, we have $(S')^{-1} M'=0$. Therefore, for each $1 \le i \le k$ there exists $s_i \in S' \subset S$ such that $s_im_i=0$, which implies $S^{-1}M=0$.
\end{proof}

\begin{proposition} \label{prop:torsion}
Let $M$ be an $A$-module. The following conditions are equivalent:
\begin{enumerate}[\indent \rm (a)]
\item For every finitely generated submodule $M'$ of $M$ there is a non-zero ideal $\fa$ of $A$ such that $\fa M' = 0$.
\item We have $S^{-1}M=0$.
\item For every $m \in M$ there exists $a \in A$ with non-zero image in $\bC[x_{i,j}]=A/(y_{i,j},z_{i,j})$ such that $am=0$.
\end{enumerate}
\end{proposition}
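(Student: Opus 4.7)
The plan is to establish the cyclic chain (a)$\Rightarrow$(b)$\Rightarrow$(c)$\Rightarrow$(a).

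First I would prove (a)$\Rightarrow$(b). Given $m \in M$, let $M'$ be the $\GL$-stable submodule of $M$ generated by $m$; this is finitely generated in $\cV$, because $m$ lies in a finite-length $\GL$-subrepresentation. By (a), there is a nonzero ideal $\fa$ of $A$ with $\fa M' = 0$. Corollary~\ref{cor:ann} yields $\fa + \fm = A$, whence $M' = AM' = (\fa + \fm)M' = \fm M'$. Lemma~\ref{lem:nakayama} then forces $S^{-1} M' = 0$, producing $s \in S$ with $sm = 0$. Next, (b)$\Rightarrow$(c) is immediate: since $\fm \supseteq (y_{i,j}, z_{i,j})$, every $s \in S$ lies outside $(y,z)$, hence has nonzero image in $\bC[x_{i,j}] = A/(y_{i,j},z_{i,j})$; take $a = s$.

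For the substantive implication (c)$\Rightarrow$(a), let $M' \subseteq M$ be finitely generated, say $M' = AV$ with $V$ a finite-length $\GL$-subrepresentation of $M$. Choose $\fb$-highest weight vectors $m_1, \ldots, m_k$ for the simple $\GL$-summands of $V$; by Corollary~\ref{cor:pe-gens} these generate $V$ as a $\fpe$-representation. Using (c), pick $a_i \in A$ with $a_i m_i = 0$ and $\overline{a_i} \ne 0$ in $\bC[x_{i,j}]$. Because the quotient $A/(y_{i,j},z_{i,j}) = \bC[x_{i,j}]$ is a polynomial domain, the ideal $(y_{i,j}, z_{i,j})$ is prime, so the product $b = a_1 \cdots a_k$ lies outside $(y,z)$ and in particular is nonzero; super-commutativity of $A$ then gives $b \cdot m_i = 0$ for each $i$.

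The remaining step is to promote ``$b$ annihilates each $m_i$'' to ``a nonzero $\GL$-stable ideal annihilates $V$''. A useful preliminary observation is that $\mathrm{Ann}_A(V)$ is automatically $\fgl$-stable, and hence $\GL$-stable: for $g \in \fgl$, $a \in \mathrm{Ann}_A(V)$, and $v \in V$, $(ga)v = g(av) - a(gv) = 0$, because $gv \in V$. Thus it suffices to produce any single nonzero element of $A$ annihilating all of $V$. The main obstacle I anticipate is precisely this step: super-commutativity and $bm_i = 0$ give the iterated relation $b^{r+1}(p_r \cdots p_1 m_i) = 0$ for $p_j \in \fpe$, exhibiting $b$ as locally nilpotent on $V$, but this does not immediately give a uniform annihilator. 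To overcome this, I would try to exploit the finite length of $V$ as a $\fpe$-representation (Proposition~\ref{prop:pe-inj}(a)) to bound the required power of $b$, or alternatively reduce to finite rank $\bC^{N \mid N}$, where $A(\bC^{N|N})$ is Noetherian, apply classical commutative algebra to get nonzero annihilators, and lift back to a $\GL$-stable ideal of $A$ via a stability-in-$N$ argument.
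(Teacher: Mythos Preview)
Your implications (a)$\Rightarrow$(b) and (b)$\Rightarrow$(c) match the paper exactly.

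For (c)$\Rightarrow$(a), your setup differs only cosmetically from the paper's: you take $\fb$-highest weight vectors and invoke Corollary~\ref{cor:pe-gens}, whereas the paper simply takes any finite generating set $m_1,\dots,m_k$ for $M'$. Your observation that $\mathrm{Ann}_A(V)$ is automatically $\fgl$-stable is a clean way to package the endgame, though the paper does not phrase it this way. The gap you flag is real and is the one substantive step in the argument.

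The paper closes it by citing \cite[Prop.~2.2]{sym2noeth}: from $a m_i=0$ with $a$ having nonzero image in $\bC[x_{i,j}]$, that proposition yields an exponent $n$, depending only on the $m_i$, such that $a^n(gm_i)=0$ for every $g\in\GL_{\infty\mid\infty}$; the ideal generated by $a^n$ then annihilates $M'$. The argument behind that proposition is indeed a reduction to finite rank, so your approach~2 points in the right direction; what is missing from your proposal is precisely the content of that external result.

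Your approach~1, by contrast, is unlikely to succeed as stated. Finite length of $V$ in $\Rep(\fpe)$ (Proposition~\ref{prop:pe-inj}(a)) does not bound the number of $\fpe$-operations needed to reach an arbitrary vector of $V$ from the $m_i$: the simple objects of $\Rep(\fpe)$ are infinite-dimensional, so there is no uniform $r$ with $V=\sum_i \cU_{\le r}(\fpe)\cdot m_i$, and the iteration $b^{r+1}(p_r\cdots p_1 m_i)=0$ therefore does not produce a single power of $b$ annihilating all of $V$.
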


\begin{proof}
Suppose (a) holds, and let us prove (b). Let $M' \subset M$ be finitely generated, and let $\fa$ be a non-zero ideal of $A$ annihilating the submodule $M'$. Since $\fa+\fm=A$ by Lemma~\ref{cor:ann}, we have $\fm M'=M'$, and so $S^{-1} M'=0$ by Lemma~\ref{lem:nakayama}. Since this holds for all finitely generated $M' \subset M$, it follows that $S^{-1} M=0$. 

Now suppose (b) holds. So given $m \in M$, there exists $s \in S$ such that $sm=0$. As $s$ has non-zero reduction in $\bC[x_{i,j}]$, one can take $a=s$ in (c). Thus (c) holds.

Finally, suppose (c) holds. Let $M'$ be a submodule of $M$ generated by $m_1, \ldots, m_k$. Let $a_i m_i=0$ with $a_i$ as in (c). Let $a=a_1 \cdots a_k$; this still has non-zero image in $A/(y_{i,j}, z_{i,j})$, since $\bC[x_{i,j}]$ is a domain, and annihilates each $m_i$. Following the proof of \cite[Prop.~2.2]{sym2noeth}, we see that there exists $n$, depending only on the $m_i$, such that $a^n(gm_i)=0$ for all $g \in \GL_{\infty\mid\infty}$. Note that $a^n \ne 0$, again since $\bC[x_{i,j}]$ is a domain. It follows that the (non-zero) ideal of $A$ generated by $a^n$ annihilates $M'$, and so (a) holds.
\end{proof}

We say that an $A$-module is {\bf torsion} if it satisfies the equivalent conditions of Proposition~\ref{prop:torsion}. We write $\Mod_A^{\tors}$ for the category of torsion modules. It is clear that this is a Serre subcategory of $\Mod_A$. We denote by $\Mod_K$ the Serre quotient $\Mod_A/\Mod_A^{\tors}$, and write $\rT \colon \Mod_A \to \Mod_K$ for the localization functor.

\section{Local structure of $A$-modules at $\fm$} \label{s:phi}

In this section, we analyze the local structure of $A$-modules at the ideal $\fm$. The main result (Proposition~\ref{prop:phiM}) shows that if $M$ is an $A$-module then the localization $S^{-1} M$ can be functorially recovered from the $\fpe$-representation $M/\fm M$. As an important corollary, we find that $S^{-1} M$ is free over $S^{-1} A$.

\subsection{Construction of $\phi$}

Following the notation from \S \ref{ss:subgroups}, let $\bC[\ol{B}]$ be the super polynomial ring in even variables $a_{i,j}$ with $i \le j$, odd variables $b_{i,j}$ with $i < j$, odd variables $c_{i,j}$ with $i \le j$ and even variables $d_{i,j}$ with $i \le j$. Then $\bC[B]$ is $\bC[\ol{B}]$ with the variables $a_{i,i}$ and $d_{i,i}$ inverted. We let $T$ act on $\bC[\ol{B}]$ as follows:
\begin{displaymath}
\alpha \cdot a_{i,j}=\alpha_i^{-1} a_{i,j}, \quad \alpha \cdot b_{i,j}=\alpha_i^{-1} b_{i,j}, \quad
\alpha \cdot c_{i,j}=\alpha_i c_{i,j}, \quad \alpha \cdot d_{i,j}=\alpha_i d_{i,j}.
\end{displaymath}

Let $V$ be a polynomial representation of $\GL(\bV)$. Then $V$ is naturally a comodule over $\bC[\ol{B}]$ (see \cite[\S 3.2]{sym2noeth}). The image of the comultiplication map $V \to V \otimes \bC[\ol{B}]$ is elementwise $T$-invariant. Let $M$ be an $A$-module. Taking $V=M$ in the previous comment and using that $\fm M$ is $T$-invariant, we thus obtain a map 
\begin{displaymath}
\phi_M \colon M \to (M/\fm M \otimes \bC[\ol{B}])^T.
\end{displaymath}
In the remainder of this section, we study this map.

\subsection{The map $\phi_A$}

We now study the map $\phi_A$:
\begin{displaymath}
\phi_A \colon A \to \bC[\ol{B}]^T = A'.
\end{displaymath}
This is an algebra homomorphism. The ring $A'$ is easy to describe:
\begin{displaymath}
A'=\bC[a_{i,j} c_{i,k}, a_{i,j}d_{i,k}, b_{i,j}c_{i,k}, b_{i,j}d_{i,k}].
\end{displaymath}
We now compute $\phi_A$ explicitly. Under comultiplication, we have
\begin{displaymath}
e_i \mapsto \sum_{k \le i} e_k a_{k,i} + \sum_{k \le i} f_k c_{k,i}, \qquad
f_i \mapsto \sum_{k\le i} e_k b_{k,i} + \sum_{k \le i} f_k d_{k,i},
\end{displaymath}
using the convention $b_{i,i}=0$. We thus have
\begin{displaymath}
x_{i,j} \mapsto \left( \sum_{k \le i} e_k a_{k,i} + \sum_{k \le i} f_k c_{k,i} \right) \cdot
\left( \sum_{\ell<j} e_{\ell} b_{\ell,j} + \sum_{\ell \le j} f_{\ell} d_{\ell,j} \right)  \cdot \epsilon
\end{displaymath}
under comultiplication. Passing to $A/\fm$, only the $e_i f_i \epsilon$ terms survive, and they all become~1. We thus find
\begin{displaymath}
\phi(x_{i,j}) = \sum_{k \le i,j} ( a_{k,i} d_{k,j} + c_{k,i} b_{k,j} ) = X_{i,j}.
\end{displaymath}
Similar computations give
\begin{align*}
\phi(y_{i,j})&=\sum_{k \le i,j} ( a_{k,i} c_{k,j}+a_{k,j} c_{k,i}) = Y_{i,j},\\
\phi(z_{i,j})&=\sum_{k \le i,j} (d_{k,i}b_{k,j}-b_{k,i}d_{k,j}) = Z_{i,j}.
\end{align*}

Define an ordering on the variables $a, b, c, d$ as follows: for $p,q \in  \{ a, b, c ,d \}$, first we define $p_{ij} > q_{k\ell}$ if $(j,i) > (\ell,k)$ in the lexicographic order, and then to compare $p_{i,j}$ and $q_{i,j}$, we use the ordering $d > c > a > b$. Extend this to order monomials using the graded lexicographic ordering. The {\bf leading term} of an element in $A'$ is the largest monomial appearing in it with nonzero coefficient. So when $i \le j$, we have
\begin{itemize}
\item the leading term of $X_{i,j}$ is $a_{i,i}d_{i,j}$, 
\item the leading term of $X_{j,i}$ is $a_{i,j} d_{i,i}$, 
\item the leading term of $Y_{i,j}$ is $a_{i,i} c_{i,j}$, and 
\item the leading term of $Z_{i,j}$ is $d_{i,i} b_{i,j}$ (note that $b_{ii} = 0$ and that $Z_{i,i}=0$ since $z_{i,i}=0$).
\end{itemize}
The leading term of a monomial in $X,Y,Z$ is the product of the corresponding leading terms. (Note: in a non-zero monomial, $Y_{i,j}$ and $Z_{i,j}$ can only appear once, since they square to zero, and so the product of leading terms is non-zero.)

\begin{proposition} \label{prop:phi-inj}
$\phi_A$ is injective.
\end{proposition}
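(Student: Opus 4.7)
The plan is a Gröbner / leading-term argument using the graded lex order on monomials in $A'$ induced by the ordering on variables defined just before the proposition. Pick a monomial basis of $A$: since $A$ is supercommutative with the $x_{p,q}$ even and the $y_{i,j}, z_{i,j}$ odd, a basis consists of ordered products
\[
m = \prod_{p,q} x_{p,q}^{\alpha_{p,q}} \cdot \prod_{i \le j} y_{i,j}^{\varepsilon_{i,j}} \cdot \prod_{i < j} z_{i,j}^{\eta_{i,j}}
\]
with $\alpha_{p,q} \in \bZ_{\ge 0}$ and $\varepsilon_{i,j}, \eta_{i,j} \in \{0,1\}$. The plan is to show that $m \mapsto \mathrm{LT}(\phi(m))$ is injective on this basis; from this, injectivity of $\phi_A$ follows in the usual way.

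First I would verify that $\mathrm{LT}(\phi(m))$ equals, up to a nonzero sign, the product of the leading terms of the factors $\phi(x_{p,q}) = X_{p,q}$, $\phi(y_{i,j}) = Y_{i,j}$, $\phi(z_{i,j}) = Z_{i,j}$ listed in the text. The crucial no-cancellation observation is that distinct $Y_{i,j}$'s contribute pairwise distinct odd variables $c_{i,j}$, distinct $Z_{i,j}$'s contribute pairwise distinct odd variables $b_{i,j}$, and the $X_{p,q}$'s contribute only even variables, so the product of the individual leading terms is a nonzero monomial in $A'$. Compatibility of the graded lex order with multiplication then guarantees no monomial in the expansion of $\phi(m)$ exceeds this one.

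Next I would show that the basis monomial $m$ is uniquely recoverable from $\mathrm{LT}(\phi(m))$. The presence of $c_{i,j}$ (resp.\ $b_{i,j}$) in the leading term records $\varepsilon_{i,j}$ (resp.\ $\eta_{i,j}$). For $p < q$ the variable $d_{p,q}$ arises only from factors $X_{p,q}$, so its exponent equals $\alpha_{p,q}$; symmetrically, for $p > q$ the exponent of $a_{q,p}$ equals $\alpha_{p,q}$. Finally, $\alpha_{p,p}$ is read off the exponent of $a_{p,p}$ after subtracting the already-determined contributions coming from the $y_{p,j}$'s (with $j \ge p$) and from the $x_{p,q}$ with $q > p$.

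Once Step~2 is in hand, the conclusion is immediate: given a nonzero $f = \sum c_m m$, the leading terms $\mathrm{LT}(\phi(m))$ for $c_m \ne 0$ are distinct, their maximum is attained at a unique $m_0$, and this maximum appears in $\phi(f)$ with coefficient $\pm c_{m_0} \ne 0$. The main obstacle is the bookkeeping of Step~2, specifically disentangling the multiple sources contributing to the diagonal variables $a_{p,p}$ and $d_{p,p}$; this is where the careful choice of term order (in particular the tiebreaker $d > c > a > b$ on variables with equal index pair) does the real work.
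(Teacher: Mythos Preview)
Your proposal is correct and follows essentially the same approach as the paper's proof: both arguments verify that the leading term of $\phi(m)$ is the product of the individual leading terms of the $X$, $Y$, $Z$ factors (nonzero because the odd variables $c_{i,j}$, $b_{i,j}$ appearing are all distinct), and then show that the monomial $m$ can be uniquely reconstructed from this leading term by reading off the odd variables first and then the off-diagonal $a$'s and $d$'s. Your reconstruction bookkeeping in Step~2 is a slightly more explicit version of the paper's ``peel off $c$'s and $b$'s, then off-diagonal $d$'s and $a$'s'' argument, but the substance is identical.
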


\begin{proof}
It suffices to show that distinct monomials in the $X,Y,Z$ (where each $Y_{i,j}$ and $Z_{i,j}$ appear at most once) have distinct leading terms. If we have a product of the $a,b,c,d$ which is the leading term of some monomial in $X,Y,Z$, we just need to show that this monomial can be uniquely reconstructed. First, any instances of $c_{i,j}$ must have an accompanying $a_{i,i}$ and this corresponds to an instance of $Y_{i,j}$, and similarly for $b_{i,j}$. After removing these, we are left with a leading term in $a,d$. But again, any instance of $d_{i,j}$ with $i<j$ has an accompanying $a_{i,i}$ and this corresponds to $X_{i,j}$, and similarly for $a_{i,j}$. 
\end{proof}

\begin{lemma} \label{lem:NZD}
Let $I$ be the ideal of $\vert A \vert$ generated by elements of the form $y_{i,j}$ and $z_{i,j}$. An element $s \in A$ is a nonzerodivisor if and only if $s \notin I$.
\end{lemma}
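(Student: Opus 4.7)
The plan is to recognize $A$ explicitly as a super-commutative ring. Since $\Sym^2(\bV)[1]$ decomposes as a super vector space into its even part ${}_0\bV \otimes {}_1\bV$ (spanned by the $x_{i,j}$) and its odd part $\Sym^2({}_0\bV) \oplus \lw^2({}_1\bV)$ (spanned by the $y_{i,j}$ and $z_{i,j}$), applying $\Sym$ in the super sense gives an isomorphism of super-commutative $\bC$-algebras
\[
A \;\cong\; R \otimes_\bC \lw W,
\]
where $R = \bC[x_{i,j}]$ is an ordinary polynomial ring and $W$ is the $\bC$-vector space spanned by the $y_{i,j}$ (with $y_{i,j}=y_{j,i}$) and the $z_{i,j}$ (with $z_{i,j}=-z_{j,i}$). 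Under this identification $I$ is precisely $R \otimes \lw^{\ge 1} W$, i.e., the augmentation ideal of the exterior factor, so in particular $A/I = R$ is a domain.

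For the forward direction, suppose $s \in I$. Then $s$ involves only finitely many of the odd generators, so there is a finite-dimensional subspace $W_0 \subset W$ with $s \in R \otimes \lw^{\ge 1} W_0$. Let $\omega_0$ be a top exterior form in $W_0$ (the wedge of any basis); then $\omega_0 \ne 0$ in $A$, and for each basis monomial $e \in \lw^{\ge 1} W_0$ the product $e \wedge \omega_0$ contains a repeated factor and hence vanishes. Expanding $s$ in this basis gives $s \cdot \omega_0 = 0$, so $s$ is a zerodivisor.

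For the converse, suppose $s \notin I$ and write $s = s_0 + s_1$ with $s_0 \in R \setminus \{0\}$ and $s_1 \in I$. Filter $A$ by exterior degree, $F^k A = R \otimes \lw^{\ge k} W$. If $t \ne 0$, let $m$ be the smallest exterior degree appearing in $t$ and let $t^{(m)} \in R \otimes \lw^m W$ be its degree-$m$ component. Then the degree-$m$ part of $st$ is exactly $s_0 t^{(m)}$: contributions $s_0 t^{(k)}$ with $k>m$ and $s_1 t^{(k)}$ with $k \ge m$ all lie in $F^{m+1}A$. Since $R$ is a domain and $R \otimes_\bC \lw^m W$ is a free $R$-module, $s_0 t^{(m)} \ne 0$; hence $st \ne 0$.

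The only conceptual step is the ring-level identification $A \cong R \otimes \lw W$ in the first paragraph (it must be checked as super-commutative rings, not merely as super vector spaces, using that the $x$'s are even and the $y$'s and $z$'s are odd). Once that is in hand, the forward direction is just the observation that in an exterior algebra a sufficiently large top form annihilates everything of positive exterior degree, and the reverse direction is the standard filtered argument that works because the lowest piece $R$ of the associated graded is a domain.
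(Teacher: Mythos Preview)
Your proof is correct and takes essentially the same approach as the paper: both use the filtration of $A$ by exterior degree in the odd generators (equivalently the $I$-adic filtration, since $I^n = R \otimes \lw^{\ge n} W$) and argue that the degree-$0$ piece $R = \bC[x_{i,j}]$ is a domain. Your version is more explicit, spelling out the identification $A \cong R \otimes \lw W$ and giving a concrete argument for the forward direction (via a top form in a finite exterior subalgebra), whereas the paper simply asserts that direction as ``clear.''
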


\begin{proof} 
Clearly every element of $I$ is a zero divisor. Now suppose $s \notin I$ and pick $a \in A \setminus \{0\}$. To a monomial $m$ in variables of the form $x_{i,j}$, $y_{i,j}$ and $z_{i,j}$, we say that $\deg(m) = n$ if $n$ is the largest integer such that $m \in I^n$ (this notion of degree only satisfies $\deg(m_1 m_2) \ge \deg(m_1) + \deg(m_2)$). Since the monomials form a basis for $A$, this defines a direct sum decomposition $A = \bigoplus_{n \ge 0} A_n$ which we will use for the rest of this proof (but nowhere else in the paper). Clearly, the degree $0$ piece $m$ of $s$ is nonzero (and $m$ is a polynomial in the $x_{i,j}$ with coefficients in $\bC$, and hence a nonzerodivisor). Let $e$ be the piece of $a$ of minimal degree. Then $me$ is nonzero, and the minimal degree piece of $as$ is $me$. This shows that $as \ne 0$ and proves that $s$ is a nonzerodivisor.
\end{proof}

Recall that $S$ is the set of super degree~0 elements of $A$ not belonging to $\fm$. By Lemma~\ref{lem:NZD}, every element of $S$ is a nonzerodivisor (since $I \subset \fm$).

\begin{proposition} \label{prop:phiA}
The localization of $\phi_A$ at $S$ is an isomorphism.
\end{proposition}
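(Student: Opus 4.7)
The plan is to prove surjectivity, as injectivity was just established in Proposition~\ref{prop:phi-inj}. Since the torus $T$ splits as $\prod_i T_i$ with $T_i = \bG_m$ acting nontrivially only on the row-$i$ variables $a_{i,\ast}, b_{i,\ast}, c_{i,\ast}, d_{i,\ast}$, we have a tensor factorization $A' = \bigotimes_i R_i$ where $R_i \subset \bC[\ol B]$ is the subring of row-$i$ $T_i$-invariants. It therefore suffices to show, inductively on $i$, that every generator of $R_i$ lies in $\phi_A(S)^{-1}\phi_A(A)$; I will carry this induction together with the auxiliary claim that $a_{i,i}d_{i,i}$ is a unit in $\phi_A(S)^{-1}A'$.

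For the inductive step I first reduce to row-$i$ equations: modulo the subring generated by row-$<i$ invariants, the images $\phi_A(x_{i,i})$, $\phi_A(x_{i,j})$, $\phi_A(x_{j,i})$, $\phi_A(y_{i,i})$, $\phi_A(y_{i,j})$, $\phi_A(z_{i,j})$ (for $j>i$) collapse to $a_{i,i}d_{i,i}$, $a_{i,i}d_{i,j}+c_{i,i}b_{i,j}$, $a_{i,j}d_{i,i}$, $2a_{i,i}c_{i,i}$, $a_{i,i}c_{i,j}+a_{i,j}c_{i,i}$, $d_{i,i}b_{i,j}$ respectively (using $b_{i,i}=0$). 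The correction terms from rows $<i$ already lie in $\phi_A(S)^{-1}\phi_A(A)$ by induction, so each of these row-$i$ combinations (in particular $a_{i,i}d_{i,i}$) does too.

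Next I will show that $a_{i,i}d_{i,i}$ is a unit in the localization. Writing $a_{i,i}d_{i,i} = \phi_A(s)/\phi_A(q)$ with $s\in A$ and $q\in S$, I will use the evaluation homomorphism $\mathrm{ev}\colon \bC[\ol B] \to \bC$ sending $a_{k,k}, d_{k,k}\mapsto 1$ and all other variables to $0$. A direct check on the generators $x_{i,j}, y_{i,j}, z_{i,j}$ shows that $\mathrm{ev}\circ\phi_A$ agrees with the quotient map $A\to A/\fm = \bC$. Applying $\mathrm{ev}$ to $\phi_A(s) = \phi_A(q)\cdot a_{i,i}d_{i,i}$ yields $s\equiv q\pmod{\fm}$, which is nonzero since $q\in S$, so $s\in S$ and $a_{i,i}d_{i,i}$ is a ratio of two elements of $\phi_A(S)$.

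Finally, once $a_{i,i}d_{i,i}$ is invertible the identities $(a_{i,i}c_{i,i})(a_{i,j}d_{i,i}) = (a_{i,i}d_{i,i})(a_{i,j}c_{i,i})$ and $(a_{i,i}c_{i,i})(d_{i,i}b_{i,j}) = (a_{i,i}d_{i,i})(c_{i,i}b_{i,j})$ extract $a_{i,j}c_{i,i}$ and $c_{i,i}b_{i,j}$ individually, and hence split the two sums above to give all ``anchored'' generators $a_{i,i}c_{i,k}, a_{i,i}d_{i,k}, a_{i,k}c_{i,i}, a_{i,k}d_{i,i}, c_{i,i}b_{i,k}, d_{i,i}b_{i,k}$; any remaining $u_{i,j}v_{i,k}$ with $j,k>i$, $u\in\{a,b\}$, $v\in\{c,d\}$ is recovered as $(u_{i,j}d_{i,i})(a_{i,i}v_{i,k})/(a_{i,i}d_{i,i})$. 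The main obstacle is the invertibility step, since $a_{i,i}d_{i,i}$ does not individually lift to $\phi_A(A)$; the evaluation-at-identity trick is what forces the lift $s$ to land in $S$.
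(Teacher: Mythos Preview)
Your argument is correct and follows essentially the same inductive strategy as the paper: both proofs establish surjectivity by showing that each quadratic $T$-invariant $u_{i,j}v_{i,k}$ lies in $\phi_A(S)^{-1}\phi_A(A)$, inducting on the row index and using the explicit formulas for $X_{i,j},Y_{i,j},Z_{i,j}$ to peel off the row-$i$ contribution from lower-row terms already handled. Your tensor factorization $A'=\bigotimes_i R_i$ is a clean way to organize this induction, and your extraction of the individual generators from the ``anchored'' ones via $(u_{i,j}d_{i,i})(a_{i,i}v_{i,k})/(a_{i,i}d_{i,i})$ matches the paper's case analysis.

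The one genuine difference is the treatment of the invertibility of $a_{i,i}d_{i,i}$. The paper carries an extra inductive hypothesis, namely that every quadratic invariant other than $a_{k,k}d_{k,k}$ lies in the image of $S^{-1}\fm$; this forces the lower-row correction terms in $X_{i,i}$ to lie in $S^{-1}\fm$, so the lift of $a_{i,i}d_{i,i}$ lands in $S$ automatically. You instead use the evaluation map $\mathrm{ev}\colon\bC[\ol B]\to\bC$ (identity matrix) and the observation that $\mathrm{ev}\circ\phi_A$ coincides with $A\to A/\fm$, which directly forces any lift $s$ of $a_{i,i}d_{i,i}$ to satisfy $s\equiv q\pmod\fm$ and hence $s\in S$. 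This is a nice shortcut: it avoids tracking the $\fm$-image throughout the induction, at the cost of invoking one additional (but easy) compatibility. One small point you might make explicit is that the lift $s$ can be taken to be super-homogeneous of degree~$0$, since $\phi_A$ is injective and degree-preserving and $\phi_A(q)\cdot a_{i,i}d_{i,i}$ is even.
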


\begin{proof}
Injectivity of $S^{-1} \varphi_A$ follows from Proposition~\ref{prop:phi-inj} because localization is exact. Let $x$ be a nonzero element of the form $a_{k,i} d_{k,j}$, $a_{k,i} c_{k,j}$, $b_{k,i} c_{k,j}$ or $b_{k,i} d_{k,j}$ in $S^{-1}A'$. In a similar manner as in Proposition~\ref{prop:phi-inj}, we define a total quasiorder on the variables $a, b, c, d$ as follows: for $p,q \in  \{ a, b, c ,d \}$, first we define $p_{ij} > q_{k\ell}$ if $(j,i) > (\ell,k)$ in the lexicographic order. Then we extend it to a total quasiorder on monomials using the graded lexicographic ordering. We show by induction on this quasiorder (which is clearly well-founded) that:
\begin{itemize}
\item if $x$ is of type $ad$, $bc$, $ac$, or $bd$, then it is in the image of $S^{-1} \phi_A$,
\item if $x$ is not of the form $a_{i,i} d_{i,i}$ then it is in the image of $S^{-1} \fm$,
\item if $x$ is of the form $a_{i,i} d_{i,i}$ then it is in the image of $S$ and hence is a unit in $S^{-1} A'$.
\end{itemize} 

The base case is clear because by definition $ \varphi_A({x_{1,1}}) = X_{1,1} = a_{1,1}d_{1,1}$, $Y_{1,1} = 2 a_{1,1} c_{1,1}$ are in the image and $x_{1,1} \in S$. Also note that $b_{i,i} = 0$, so $b_{1,1} c_{1,1} = b_{1,1} d_{1,1} = 0$.

For the case $k < i,j$, the following expressions and the induction hypothesis proves the hypothesis at hand: 
\begin{align*}
a_{k,i} d_{k,j} &=  (a_{k,k} d_{k,k})^{-1} (a_{k,i} d_{k,k}) (a_{k,k} d_{k,j})  \\
a_{k,i} c_{k,j} &=  (a_{k,k} d_{k,k})^{-1} (a_{k,i} d_{k,k}) (a_{k,k} c_{k,j})  \\
b_{k,i} c_{k,j} &=  (a_{k,k} d_{k,k})^{-1} (b_{k,i} d_{k,k}) (a_{k,k} c_{k,j})  \\
b_{k,i} d_{k,j} &=  (a_{k,k} d_{k,k})^{-1} (b_{k,i} d_{k,k}) (a_{k,k} d_{k,j}). 
\end{align*} 

Next we consider the case $k = i < j$. Then the equations 
\begin{align*}
X_{j,i} &= a_{i,j} d_{i,i}  + \text{lower terms of type $ad$ or $bc$}  \\
Z_{i,j} &= d_{i,i} b_{i,j}  + \text{lower terms of type $ac$ or $bd$}
\end{align*} 
imply that $ a_{i,j} d_{i,i}$ and $d_{i,i} b_{i,j}$ satisfy the hypothesis. Next the equations  \begin{align*}
b_{i,j} c_{i,i} &= (a_{i,i} d_{i,i})^{-1} (b_{i,j} d_{i,i}) (a_{i,i} c_{i,i}) \\
a_{i,j} c_{i,i} &= (a_{i,i} d_{i,i})^{-1} (a_{i,j} d_{i,i}) (a_{i,i} c_{i,i}) 
\end{align*} 
show that $b_{i,j} c_{i,i}$ and $a_{i,j} c_{i,i}$ satisfy the hypothesis. Finally, the equations \begin{align*}
X_{i,j} &= a_{i,i} d_{i,j} + c_{i,i} b_{i,j}  + \text{lower terms of type $ad$ or $bc$} \\
Y_{i,j} &= a_{i,i} c_{i,j} +  a_{i,j} c_{i,i} + \text{lower terms of type $ac$ or $bd$}  
\end{align*} show that $a_{i,i} d_{i,j}$ and $a_{i,i} c_{i,j}$ satisfy the hypothesis.

The case $k = j < i$ follows from the last case. Finally, the case $k=i=j$ follows immediately from the equations
\begin{align*}
X_{j,j} &= a_{j,j} d_{j,j} + \text{lower terms of type $ad$ or $bc$}  \\
Y_{j,j} &= 2 a_{j,j} c_{j,j} + \text{lower terms of type $ac$ or $bd$}
\end{align*} 
and the fact that $\phi_A(x_{j,j}) =X_{j,j}$ is in the image of $S$. 
\end{proof}

\begin{proposition} \label{prop:extended}
Let $\fn \subset S^{-1}\bC[\ol{B}]$ be the ideal generated by $a_{i,i}-1$, $d_{i,i}-1$, and the remaining variables $($i.e., $a_{i,j}$ for $i \ne j$, $d_{i,j}$ for $i\ne j$, the $b_{i,j}$, and the $c_{i,j})$. Then the extension of $\fm$ to $S^{-1} A'$ is the contraction of $\fn$ to $A'$.
\end{proposition}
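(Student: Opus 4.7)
The plan is to identify both ideals in question as the kernel of a single surjective ring homomorphism $S^{-1}A' \to \bC$, from which their equality is immediate.

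I would begin by introducing the evaluation homomorphism $\alpha\colon \bC[\ol{B}] \to \bC$ sending $a_{i,i}, d_{i,i} \mapsto 1$ and every remaining generator ($a_{i,j}, d_{i,j}$ with $i \ne j$, together with all $b_{i,j}$ and $c_{i,j}$) to $0$. By inspection $\ker(\alpha)$ is generated by the same elements as $\fn$. Using the explicit formulas for $X_{i,j}$, $Y_{i,j}$, $Z_{i,j}$ derived in \S\ref{s:phi}, one checks directly that each summand of $Y_{i,j}$ contains a $c$-factor and each summand of $Z_{i,j}$ contains a $b$-factor, so both vanish under $\alpha$, whereas the only surviving summand of $X_{i,j}$ is $a_{k,i}d_{k,j}$ with $k=i=j$, yielding $X_{i,j}\mapsto \delta_{i,j}$. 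Hence $\alpha \circ \phi_A\colon A \to \bC$ agrees with the homomorphism induced by the form $\omega$, whose kernel is $\fm$ by definition.

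Now I would localize at $S$. Because every $s \in S$ satisfies $\omega(s) \ne 0$, we have $\alpha(\phi_A(s)) \in \bC^{\times}$, so $\alpha$ extends uniquely to $\bar\alpha\colon S^{-1}\bC[\ol{B}] \to \bC$, and $\ker(\bar\alpha) = \fn$ because $S^{-1}(\bC[\ol{B}]/\fn) = S^{-1}\bC = \bC$. Restricting $\bar\alpha$ along the inclusion $S^{-1}A' \hookrightarrow S^{-1}\bC[\ol{B}]$ (injective by Proposition~\ref{prop:phi-inj} and exactness of localization) produces a map whose kernel is, tautologically, the contraction of $\fn$ to $S^{-1}A'$. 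On the other hand, Proposition~\ref{prop:phiA} says $S^{-1}\phi_A$ is an isomorphism, so transporting $\bar\alpha \circ S^{-1}\phi_A = S^{-1}\omega$ along this isomorphism identifies the same kernel with $S^{-1}\phi_A(\fm \cdot S^{-1}A) = \fm \cdot S^{-1}A'$. The two descriptions of this common kernel give the desired equality of ideals.

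I do not anticipate any real obstacle: once the explicit formulas of \S\ref{s:phi} and the isomorphism of Proposition~\ref{prop:phiA} are available, the argument is completely formal. The only mild point of care is verifying that elements of $S$ map to units in $\bC$ under $\alpha \circ \phi_A$, which is immediate from $S \cap \fm = \emptyset$.
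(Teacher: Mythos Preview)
Your proposal is correct and follows essentially the same approach as the paper: both compute that the composite $A \xrightarrow{\phi_A} A' \hookrightarrow \bC[\ol{B}] \to \bC[\ol{B}]/\fn \cong \bC$ has kernel $\fm$ via the explicit formulas for $X_{i,j}, Y_{i,j}, Z_{i,j}$, and then invoke the isomorphism $S^{-1}\phi_A$ of Proposition~\ref{prop:phiA} to conclude. The only cosmetic difference is that the paper phrases the final step as ``$\fm^e \subseteq \fn^c$ plus maximality of $\fm^e$'' whereas you identify both ideals directly as the kernel of the same surjection $S^{-1}A' \to \bC$; also, your citation of Proposition~\ref{prop:phi-inj} for the injectivity of $S^{-1}A' \hookrightarrow S^{-1}\bC[\ol{B}]$ is unnecessary, since $A' = \bC[\ol{B}]^T$ is a subring by definition.
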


\begin{proof}
From the formulas for $\varphi(x_{i,j})$, $\varphi(y_{i,j})$, and $\varphi(z_{i,j})$, one easily sees that the kernel of the homomorphism $A \to \bC[\ol{B}] \to \bC[\ol{B}]/\fn$ is $\fm$. It follows that the kernel of $A \to A' \to A'/\fn^c$ is also $\fm$. Thus the extension of $\fm$ to $A'$ is contained in $\fn^c$. But this extension is maximal, since $S^{-1} \varphi_A$ is an isomorphism, and so we have equality.
\end{proof}

\subsection{The map $\phi_M$ in general}

A {\bf monomial character} of $T$ is a homomorphism $T \to \bC^{\times}$ of the form
\begin{displaymath}
\chi_{\bn} \colon (\alpha_1, \alpha_2, \ldots) \mapsto \alpha_1^{n_1} \alpha_2^{n_2} \cdots
\end{displaymath}
where the $n_i$ are integers and $n_i=0$ for $i \gg 0$. An {\bf admissible representation} of $T$ is a representation $V$ of $T$ that decomposes as a direct sum of monomial characters. We note that if $V$ is an algebraic representation of $\fpe$, $V$ is a representation of its Cartan subalgebra which integrates to an action of $T$; then $V \vert_T$ is an admissible representation of $T$: it suffices to check this for tensor powers of $\bV$ and $\bV[1]$ in which case it is clear.

\begin{proposition} \label{prop:T-ad}
Let $V$ be an admissible representation of $T$. Then $N=S^{-1} (V \otimes \bC[\ol{B}])^T$ is free over $\vert S^{-1} A \vert$.
\end{proposition}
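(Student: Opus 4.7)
My plan is to $T$-decompose both $V$ and $\bC[\ol B]$ into weight spaces, use the resulting bilinear-pairing decomposition of $T$-invariants in the tensor product, and reduce to showing that each weight space of $\bC[\ol B]$ is a free rank-one $\vert S^{-1}A'\vert$-module after localizing at $S$.

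Since $V$ is admissible, I write $V = \bigoplus_\chi V_\chi$ as a direct sum indexed by monomial characters. The explicit $T$-weights given for the generators $a_{i,j}, b_{i,j}, c_{i,j}, d_{i,j}$ at the start of \S\ref{s:phi} show that each such generator is a $T$-eigenvector, and hence $\bC[\ol B] = \bigoplus_\chi \bC[\ol B]_\chi$ splits into $T$-weight spaces whose zero-weight piece is $A'$. Taking $T$-invariants of the tensor product and then localizing gives
\[
N = \bigoplus_\chi V_\chi \otimes_\bC S^{-1}\bC[\ol B]_{\chi^{-1}}.
\]
By Proposition~\ref{prop:phiA}, $\vert S^{-1}A\vert \cong \vert S^{-1}A'\vert$, so it suffices to show that each $S^{-1}\bC[\ol B]_\chi$ is a free $\vert S^{-1}A'\vert$-module of rank one. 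For $\chi = \chi_{\bn}$, the natural candidate generator is the weight-$\chi$ monomial
\[
e_\chi = \prod_{i\,:\,n_i > 0} d_{i,i}^{n_i} \cdot \prod_{i\,:\,n_i < 0} a_{i,i}^{-n_i} \in \bC[\ol B].
\]
Multiplication by $e_\chi$ is a weight-shifting $\vert S^{-1}A'\vert$-linear map $\vert S^{-1}A'\vert \to S^{-1}\bC[\ol B]_\chi$ which is an isomorphism as soon as $e_\chi$ is a unit in $S^{-1}\bC[\ol B]$. Since the $a_{i,i}$ and $d_{i,i}$ are super-even, it in turn suffices to show that $a_{i,i} d_{i,i}$ is a unit in $S^{-1}\bC[\ol B]$ for each $i$.

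This last step is the main obstacle, and I would prove it by induction on $i$. The base case is immediate: $\phi_A(x_{1,1}) = a_{1,1} d_{1,1}$ and $x_{1,1} \in S$. For the inductive step, observe that $\det(x_{k,\ell})_{1 \le k, \ell \le i} \in A$ reduces mod $\fm$ to $\det(\delta_{k,\ell}) = 1$, so it lies in $S$, and $\phi_A$ sends it to the determinant $\det(X_{k,\ell})_{1 \le k, \ell \le i}$, which is therefore a unit in $S^{-1}\bC[\ol B]$. Writing the $i \times i$ matrix $X = A^T D + C^T B$ using the block matrices of $a_{j,k}, b_{j,k}, c_{j,k}, d_{j,k}$ for $j,k \le i$, and expanding the determinant by Leibniz, every summand involving at least one factor from $C^T B$ lies in the ideal $I_O$ generated by the odd variables, and the remaining summand is just $\det(A^T D) = \prod_{k=1}^i a_{k,k}d_{k,k}$. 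Performing this computation inside $S^{-1}\bC[\ol B_m]$ for any $m \ge i$, $I_O$ involves only finitely many super-odd variables (each squaring to zero) and is therefore nilpotent; hence $\prod_{k \le i}(a_{k,k} d_{k,k}) = \det(X) - (\text{nilpotent})$ is a unit via a finite Neumann series. Dividing by $\prod_{k < i}(a_{k,k} d_{k,k})$, a unit by the inductive hypothesis, shows $a_{i,i} d_{i,i}$ is a unit, as required. Reassembling, $N \cong \bigoplus_\chi V_\chi \otimes_\bC \vert S^{-1}A'\vert$ is free over $\vert S^{-1}A\vert$, with basis $\{v \otimes e_{\chi^{-1}}\}$ as $v$ runs over a $\bC$-basis of $V_\chi$ and $\chi$ over monomial characters.
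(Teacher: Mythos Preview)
Your proof is correct and follows the same overall architecture as the paper's: reduce to one-dimensional $V$, identify the explicit generator of the weight-$\chi$ piece of $S^{-1}\bC[\ol B]$ as a monomial in the $a_{i,i}$ and $d_{i,i}$ (your $e_\chi$ is the paper's $p(\bn)$ up to the sign of the exponent), and conclude freeness of rank one. The difference is in how you show this monomial generates. The paper simply says ``an argument similar to the proof of Proposition~\ref{prop:phiA}'', meaning one should rerun the leading-term induction on the monomial basis of $\bC[\ol B]$, now shifted to the weight-$\chi$ piece. You instead argue that $e_\chi$ is a \emph{unit} in the ambient ring $S^{-1}\bC[\ol B]$, so multiplication by it is an isomorphism of $|S^{-1}A'|$-modules from the weight-$0$ piece to the weight-$\chi$ piece; this bypasses the induction entirely. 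Your determinant-plus-nilpotent argument for invertibility of $a_{i,i}d_{i,i}$ is a nice self-contained alternative, though note that the proof of Proposition~\ref{prop:phiA} already establishes (in its third bullet) that $a_{i,i}d_{i,i}$ lies in the image of $S$, so you could have cited that directly. Either way, once $a_{i,i}d_{i,i}$ is a unit in a supercommutative ring with even $a_{i,i},d_{i,i}$, both factors are units individually, and your conclusion follows.
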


\begin{proof}
It suffices to treat the case when $V$ is one dimensional, say with basis $v$. Suppose that $T$ acts on $V$ through the character $\chi_{\bn}$. Define $p_i$ to be $a_{i,i}$ if $n_i>0$ and $d_{i,i}$ if $n_i<0$ (and 1 if $n_i=0$), and define $p(\bn)=p_1^{n_1} p_2^{n_2} \cdots$. The element $v \otimes p(\bn)$ is $T$-invariant, and an argument similar to the one in the proof of Proposition~\ref{prop:phiA} shows that $S^{-1} (V \otimes \bC[\ol{B}])^T$ is a free $\vert S^{-1} A \vert$-module generated by this element.
\end{proof}

Now let $M$ be an $A$-module, and consider the map
\begin{displaymath}
\varphi_M \colon M \to (M/\fm M \otimes \bC[\ol{B}])^T
\end{displaymath}
The target is naturally a module over the ring $A'$, which is itself an $\vert A \vert$-algebra, and one easily verifies that $\varphi_M$ is a map of $\vert A \vert$-modules.

\begin{lemma} \label{lem:phired}
The reduction of $\varphi_M$ modulo $\fm$ is an isomorphism.
\end{lemma}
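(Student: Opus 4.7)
The approach is to construct an explicit inverse to $\bar\phi_M$ via an augmentation. Define the algebra homomorphism $\epsilon \colon \bC[\overline{B}] \to \bC$ sending $a_{i,i}, d_{i,i} \mapsto 1$ and every other generator to $0$. A direct check of the formulas for $X_{i,j}, Y_{i,j}, Z_{i,j}$ gives $\epsilon(X_{i,j})=\delta_{i,j}$ and $\epsilon(Y_{i,j})=\epsilon(Z_{i,j})=0$, so $\epsilon \circ \phi_A \colon A \to \bC$ coincides with the canonical quotient $A \to A/\fm$. Setting $\tilde\epsilon := \mathrm{id} \otimes \epsilon \colon N \to M/\fm M$, where $N = (M/\fm M \otimes \bC[\overline{B}])^T$ is the target of $\phi_M$, the counit identity for the comultiplication shows that $\tilde\epsilon \circ \phi_M$ equals the quotient $M \to M/\fm M$. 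Reducing modulo $\fm$ yields an induced map $\bar\epsilon \colon N \otimes_A \bC \to M/\fm M$ with $\bar\epsilon \circ \bar\phi_M = \mathrm{id}_{M/\fm M}$, exhibiting $\bar\phi_M$ as split injective. It remains to prove that $\bar\epsilon$ is also an isomorphism.

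For that, the key algebraic input is the identification
\[
\bC[\overline{B}]/\phi_A(\fm)\bC[\overline{B}] \cong \bC[T].
\]
Geometrically, this reflects $\overline{B} \cap \Pe = T$ (compare Lemma~\ref{lem:iwasawa}); algebraically, I prove it by strong induction on $n$. Assuming the generators with indices at most $n-1$ have been reduced as claimed, the relations arising from $X_{n,k}=0$, $Y_{k,n}=0$, $X_{k,n}=0$, $Z_{k,n}=0$ for $k<n$, together with $Y_{n,n}=0$ and $X_{n,n}=1$, are processed in that order: by the leading-term analysis behind Proposition~\ref{prop:phi-inj}, all lower terms of each relation are already killed by the inductive hypothesis, so its leading term (a product of a unit $a_{k,k}$ or $d_{k,k}$ with a single new variable) forces that new variable to vanish, while $X_{n,n}=1$ yields $a_{n,n} d_{n,n} = 1$. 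This combinatorial bookkeeping is the main technical obstacle.

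Once this identification is in hand, since $\phi_A(\fm) \subset A' = \bC[\overline{B}]^T$ is of $T$-weight zero, the functor $-\otimes_A \bC$ commutes with taking $T$-invariants on admissible $T$-representations, giving
\[
N \otimes_A \bC \cong (M/\fm M \otimes \bC[T])^T.
\]
For any admissible $T$-representation $V$, evaluation at the identity of $T$ provides an isomorphism $(V \otimes \bC[T])^T \cong V$, since each weight space $V_\chi$ pairs with the one-dimensional $\bC[T]_{-\chi}$. Under these identifications $\bar\epsilon$ becomes precisely evaluation at the identity of $T$, hence an isomorphism; therefore $\bar\phi_M = \bar\epsilon^{-1}$ is an isomorphism as well, completing the proof.
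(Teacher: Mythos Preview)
Your proof is correct and follows the same overall strategy as the paper: both construct the augmentation map (your $\tilde\epsilon$, the paper's $\psi$, induced by $\bC[\ol{B}] \to \bC[\ol{B}]/\fn$) and use the counit identity to get $\ol\epsilon \circ \ol\phi_M = \id$, then argue that $\ol\epsilon$ is itself an isomorphism. The difference lies in how that last bijectivity is established. The paper invokes the proof of Proposition~\ref{prop:T-ad} to produce an explicit free $S^{-1}A$-basis $\{m_i \otimes p(\bn_i)\}$ of $N$ and observes that $\ol\psi$ carries it to the basis $\{m_i\}$ of $M/\fm M$. You instead compute the quotient $\bC[\ol{B}]/\phi_A(\fm)\bC[\ol{B}] \cong \bC[T]$ directly, then use the standard isomorphism $(V \otimes \bC[T])^T \cong V$ for admissible $V$; under this identification $\ol\epsilon$ is evaluation at the identity. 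Your inductive elimination of the off-diagonal generators is essentially a reorganization of the leading-term arguments behind Propositions~\ref{prop:phi-inj} and~\ref{prop:phiA} (one small imprecision: some ``lower terms'' are killed not by the inductive hypothesis per se but by earlier relations within the same step $n$, though your ordering handles this correctly). The payoff of your route is a clean geometric statement ($\ol{B}/\Pe$-orbit through the identity has stabilizer $T$), making the argument self-contained; the paper's route is shorter in context because Proposition~\ref{prop:T-ad} is already available.
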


\begin{proof}
Let $N=(M/\fm M \otimes \bC[\ol{B}])^T$ and let $\psi \colon N \to M/\fm M$ be the map induced by $\bC[\ol{B}] \to \bC[\ol{B}]/\fn=\bC$, where $\fn$ is as in Proposition~\ref{prop:extended}. Write $\ol{\phi}$ and $\ol{\psi}$ for the mod $\fm$ reductions of $\phi=\phi_M$ and $\psi$. Consider the diagram
\begin{displaymath}
\xymatrix{
M \ar[r] \ar[d] & (M \otimes \bC[\ol{B}])^T \ar[r] \ar[d] &  M \ar[d] \\
M/\fm M \ar[r]^{\ol{\phi}} & N/\fm N \ar[r]^{\ol{\psi}} & M/\fm M }
\end{displaymath}
The top right map is induced by $\bC[\ol{B}] \to \bC[\ol{B}]/\fn$. By definition, the composition of the top row is the action of $1 \in B$ on $M$, and is thus the identity. The diagram is easily seen to commute, and so $\ol{\psi} \circ \ol{\phi}$ is the identity.

Now, let $\{m_i\}$ be a basis of $M/\fm M$ consisting of $T$ weight vectors, where $m_i$ has weight $\bn_i$. Then it follows from the proof of Proposition~\ref{prop:T-ad} that the elements $m_i \otimes p(\bn_i)$ form a basis of $N/\fm N$. Since $p(\bn_i)=1 \pmod{\fn}$, we have $\ol{\psi}(m_i \otimes p(\bn_i))=m_i$. Thus $\ol{\psi}$ takes a basis of $N/\fm N$ to one of $M/\fm M$, and is thus a bijection. Since $\ol{\phi}$ is a right inverse to $\ol{\psi}$, it too is a bijection.
\end{proof}

\begin{lemma} \label{lem:eqker}
The kernel of $\varphi_M$ is $\GL$-stable, and thus an $A$-submodule of $M$.
\end{lemma}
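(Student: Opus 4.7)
My plan is to give an intrinsic description of $\ker\varphi_M$ and then use the decomposition $\fgl=\fb+\fpe$ of Lemma~\ref{lem:iwasawa} to verify $\fgl$-stability, from which $\GL$-stability follows since $M$ is a polynomial representation.

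By the definition of the comultiplication, if $\{f_\alpha\}$ is a basis of $\bC[\ol{B}]$ and $\{u_\alpha\}\subset\cU(\fb)$ is the dual basis under the natural pairing between $\cU(\fb)$ and $\bC[\ol{B}]$ at the identity, then $\Delta_M(m)=\sum_\alpha(u_\alpha\cdot m)\otimes f_\alpha$. Hence $\varphi_M(m)=(\pi\otimes\id)(\Delta_M(m))=0$ iff each $u_\alpha m$ lies in $\fm M$, giving
\[
\ker\varphi_M=\{m\in M:\cU(\fb)\cdot m\subseteq\fm M\}.
\]
With this description, $\fb$-stability is immediate: for $b\in\fb$ and $u\in\cU(\fb)$, $u(bm)=(ub)m\in\fm M$ because $ub\in\cU(\fb)$.

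The substantive step is $\fpe$-stability. Two facts make it work: (a) $\fm$ is $\fpe$-stable, and the Leibniz compatibility $p(am)=(p\cdot a)m+a(pm)$ for $p\in\fpe$, $a\in A$, $m\in M$ then gives $\cU(\fpe)\cdot\fm M\subseteq\fm M$; and (b) from $\fgl=\fb+\fpe$ (Lemma~\ref{lem:iwasawa}), the PBW theorem yields $\cU(\fgl)=\cU(\fpe)\cdot\cU(\fb)$ (apply PBW to an ordered basis of $\fgl$ that begins with a vector-space complement of $\fb\cap\fpe$ in $\fpe$ and ends with a basis of $\fb$). So for $m\in\ker\varphi_M$, $p\in\fpe$, $u\in\cU(\fb)$, factor $up=\sum_jq_ju_j$ with $q_j\in\cU(\fpe)$ and $u_j\in\cU(\fb)$, and compute
\[
u(pm)=\sum_jq_j(u_jm)\in\cU(\fpe)\cdot\fm M\subseteq\fm M,
\]
showing $pm\in\ker\varphi_M$.

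Combining both stabilities with $\fgl=\fb+\fpe$ yields $\fgl$-stability, hence $\GL$-stability. Since $\varphi_M$ is already $\vert A\vert$-linear, $\ker\varphi_M$ is a $\vert A\vert$-submodule; adjoining $\GL$-equivariance then promotes it to an $A$-submodule. The main conceptual point is obtaining the intrinsic description of $\ker\varphi_M$ as $\{m:\cU(\fb)m\subseteq\fm M\}$; once this is in hand, $\fpe$-stability reduces to a short PBW calculation that relies on the $\fpe$-stability of $\fm$, while $\fb$-stability is automatic.
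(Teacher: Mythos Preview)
Your proof is correct and follows the same overall strategy as the paper: describe $\ker\varphi_M$ intrinsically as $\{m:\cU(\fb)m\subseteq\fm M\}$, observe $\fb$-stability is automatic, and then use $\fgl=\fb+\fpe$ together with the $\fpe$-stability of $\fm$ to obtain $\fpe$-stability. The one genuine difference is in how $\fpe$-stability is verified: the paper argues by a commutator calculation (for $X\in\fb$, $Y\in\fpe$, write $XYm=YXm+[X,Y]m$ and decompose $[X,Y]=X'+Y'$), which implicitly requires an induction on the length of a word in $\cU(\fb)$; you instead invoke PBW to get $\cU(\fgl)=\cU(\fpe)\cdot\cU(\fb)$ in one stroke and conclude directly. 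Your route is slightly cleaner and avoids the implicit induction, at the cost of appealing to PBW for Lie superalgebras; both approaches rest on exactly the same two inputs (Lemma~\ref{lem:iwasawa} and $\fpe$-stability of $\fm$).
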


\begin{proof}
By definition, the kernel of $\varphi_M$ consists of those $m \in M$ such that the $B$-submodule of $M$ generated by $m$ is contained in $\fm M$. In other words, $m \in \ker(\varphi_M)$ if and only if $m \in \fm M$ and $am \in \fm M$ for all $a \in \cU(\fb)$. Clearly, $\ker(\varphi_M)$ is $\fb$-stable. It suffices to show that it is also $\pe$-stable, since $\gl=\pe+\fb$ (Lemma~\ref{lem:iwasawa}). Let $Y \in \pe$ and let $m \in \ker(\varphi_M)$, and let us show $Ym \in \ker(\varphi_M)$. Since $m \in \fm M$ and $\fm$ is $\pe$-stable, it follows that $Ym \in \fm M$. Now let $X \in \fb$. We have $XYm=YXm+[X,Y]m$. Now, $Xm \in \fm M$ and so $YXm \in \fm M$. Since $[X,Y]$ belongs to $\gl=\fb+\pe$, we can write it as $X'+Y'$ with $X' \in \fb$ and $Y' \in \pe$. Since $m \in \ker(\varphi_M)$, we have $X'm \in \fm M$ and since $m \in \fm M$ we have $Y'm \in \fm M$. The result follows.
\end{proof}

Since $\fm$ is $\fpe$-stable, so is $S$, and so $\fpe$ acts on $S^{-1}A$. We say that a $\fpe$-equivariant $S^{-1}A$-module is {\bf algebraic} if it is generated, as an $S^{-1}A$-module, by an algebraic $\fpe$-subrepresentation.

\begin{lemma} \label{lem:eqfg}
Suppose that
\begin{displaymath}
0 \to R \to M \to N \to 0
\end{displaymath}
is an exact sequence of algebraic $\fpe$-equivariant $S^{-1}A$-modules such that $M$ is equivariantly finitely generated and $N$ is free as an $\vert S^{-1}A \vert$-module. Then $R$ is also equivariantly finitely generated.
\end{lemma}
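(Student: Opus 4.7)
The plan is to reduce modulo $\fm$ and exploit the fact that, under the structural correspondence established in Proposition~\ref{prop:phiM}, equivariant finite generation of an algebraic $\fpe$-equivariant $S^{-1}A$-module corresponds to finite length of its $\fm$-reduction as a $\fpe$-representation.

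First, I tensor the given exact sequence over $S^{-1}A$ with $S^{-1}A/\fm S^{-1}A$, which is $\bC$ by Proposition~\ref{prop:extended}. Since $N$ is free over $|S^{-1}A|$ it is flat, the first $\mathrm{Tor}$ term vanishes, and I obtain a short exact sequence
\[
0 \to R/\fm R \to M/\fm M \to N/\fm N \to 0
\]
of $\fpe$-representations. Next, by hypothesis $M$ is generated over $S^{-1}A$ by a finite length algebraic $\fpe$-subrepresentation $V$; as $S^{-1}A$ acts on $M/\fm M$ through the scalar quotient $\bC$, the map $V \to M/\fm M$ is surjective, so $M/\fm M$ is a quotient of $V$ and hence of finite length. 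Consequently the subobject $R/\fm R$ is also of finite length as a $\fpe$-representation.

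The final step is to promote finite length of $R/\fm R$ back to equivariant finite generation of $R$. This is where Proposition~\ref{prop:phiM} does the work: it shows that an algebraic $\fpe$-equivariant $S^{-1}A$-module is functorially recovered from its $\fm$-reduction, and under this recovery, equivariant finite generation on the module side matches finite length on the reduction side. Hence $R$ is equivariantly finitely generated.

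I expect this last step to be the main substantive obstacle, since the first two steps are formal. If one wished to avoid invoking Proposition~\ref{prop:phiM} directly, the plausible alternative is to lift a finite length $\fpe$-subrepresentation generating $R/\fm R$ to a finite length algebraic $\fpe$-subrepresentation $W \subset R$ (using the injectivity of the $\bS_{\lambda}(\bV)$ from Proposition~\ref{prop:pe-inj}), set $R' = S^{-1}A \cdot W \subset R$, and deduce $R' = R$ from the relation $R/R' = \fm \cdot (R/R')$ via a Nakayama-type argument analogous to Lemma~\ref{lem:nakayama}. The need for a Nakayama statement adapted to $S^{-1}A$-modules (rather than $A$-modules) is the chief technical difficulty on this alternative route, which is why appealing to the structural equivalence encapsulated in Proposition~\ref{prop:phiM} is the cleaner approach.
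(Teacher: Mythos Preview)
Your main appeal to Proposition~\ref{prop:phiM} is circular: that proposition uses Lemma~\ref{lem:eqfg} in its proof (to conclude that the kernel $S^{-1}R$ of $S^{-1}\varphi_M$ is equivariantly finitely generated). So you cannot invoke it here. Even setting circularity aside, Proposition~\ref{prop:phiM} is a statement about $A$-modules (with full $\GL$-action), not about arbitrary algebraic $\fpe$-equivariant $S^{-1}A$-modules, and it does not by itself establish the ``structural correspondence'' you describe; that correspondence is the content of Theorem~\ref{thm:modSA-equiv}, proved later and also relying on Proposition~\ref{prop:phiM}.

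Your alternative route is closer to what the paper actually does, but you have correctly identified the obstacle: Nakayama for $S^{-1}A$-modules requires some finiteness on $R$ that you do not yet have. The paper's resolution is not to prove a stronger Nakayama, but to arrange that $R$ is \emph{free}, so that the usual Nakayama applies basis-element-by-basis-element with no a priori finiteness. This is done in two steps. First, if $M$ is also free over $|S^{-1}A|$, then $R$ is a direct summand of $M$, hence projective, hence free (since $S^{-1}A$ is local); your mod-$\fm$ exact sequence then gives $R/\fm R$ finite length, one lifts a finite length algebraic $\fpe$-subrepresentation $V\subset R$ surjecting onto $R/\fm R$, and freeness of $R$ lets Nakayama conclude $V$ generates $R$. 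Second, to reduce the general case to this one, replace $N$ by the free module $N'=S^{-1}A\otimes V$ for a finite length $V\subset M$ surjecting onto $N/\fm N$, and replace $M$ by the fiber product $M'=M\times_N N'$; the first step applied to $0\to N''\to N'\to N\to 0$ shows the kernel $N''$ is equivariantly finitely generated, hence so is $M'$, and since $M'\to N'$ splits equivariantly (as $N'$ is free on a $\fpe$-representation), one obtains an equivariant surjection $M'\to R$.
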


\begin{proof}
We first treat the case where $M$ is also $\vert S^{-1}A \vert$-free. Since $R$ is a summand of $M$ as an $\vert S^{-1}A \vert$-module, it follows that $R$ is projective and thus (since $S^{-1}A$ is local) free. Consider the sequence
\begin{displaymath}
0 \to R/\fm R \to M/\fm M \to N/\fm N \to 0
\end{displaymath}
which is exact by the freeness hypothesis on $N$. Since $M$ is finitely generated and algebraic, $M/\fm M$ is a finite length algebraic $\fpe$-representation, and so $R/\fm R$ is as well. Let $V \subset R$ be a finite length algebraic $\fpe$-representation surjecting onto $R/\fm R$. Then Nakayama's lemma shows that $V$ generates $R$ as an $\vert S^{-1}A \vert$-module, which shows that $R$ is equivariantly finitely generated. (Note: we can apply Nakayama without an a priori finiteness condition on $R$ since we know $R$ is free.)

We now treat the general case. Let $V \subset M$ be a finite length algebraic representation surjecting onto $N/\fm N$. Let $N'=S^{-1}A \otimes V$, let $N''$ be the kernel of the surjection $N' \to N$, and let $M'$ be the fiber product of $M$ and $N'$ over $N$. We have the following commutative diagram
\begin{displaymath}
\xymatrix{
&& 0 & 0 \\
0 \ar[r] & R \ar[r] & M \ar[r] \ar[u] & N \ar[r] \ar[u] & 0 \\
0 \ar[r] & R \ar@{=}[u] \ar[r] & M' \ar[u] \ar[r] & N' \ar[r] \ar[u] & 0 \\
& & N'' \ar[u] \ar@{=}[r] & N'' \ar[u] \\
& & 0 \ar[u] & 0 \ar[u] }
\end{displaymath}
The two rows and two columns are exact. Applying the previous paragraph to the right column, we see that $N''$ is equivariantly finitely generated. The middle column now shows that $M'$ is an extension of equivariantly finitely generated modules, and thus equivariantly finitely generated. Now, the surjection $M' \to N'$ splits equivariantly (this is why we introduced $M'$), and so there is an equivariant surjection $M' \to R$, proving that $R$ is equivariantly finitely generated.
\end{proof}

\begin{proposition} \label{prop:phiM}
Let $M$ be an $A$-module. The localization $S^{-1}\varphi_M$ is an isomorphism.
\end{proposition}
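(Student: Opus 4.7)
The plan is to reduce to finitely generated $M$ and then run a Nakayama-style argument, leveraging the two key facts already established: the reduction $\overline{\varphi}_M$ is an isomorphism (Lemma~\ref{lem:phired}) and the target $S^{-1}(M/\fm M \otimes \bC[\ol B])^T$ is free over $\vert S^{-1}A\vert$ (Proposition~\ref{prop:T-ad}).

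First I would reduce to the case where $M$ is finitely generated. Both functors $M \mapsto S^{-1}M$ and $M \mapsto S^{-1}(M/\fm M \otimes \bC[\ol B])^T$ commute with filtered colimits in $M$: localization is exact and commutes with colimits, tensor products and $M \mapsto M/\fm M$ are right exact, and $T$-invariants of an admissible $T$-representation (computed as the weight-zero subspace) commute with colimits. Since every $A$-module is the filtered colimit of its finitely generated submodules, it suffices to treat the finitely generated case.

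So assume $M$ is generated by a finite-length $\GL$-subrepresentation $V$. Then $M/\fm M$ is a $\fpe$-quotient of $V$, hence itself a finite-length $\fpe$-representation by Proposition~\ref{prop:pe-inj}(a), in particular algebraic and admissible as a $T$-representation. Write $N' = (M/\fm M \otimes \bC[\ol B])^T$; by Proposition~\ref{prop:T-ad}, $S^{-1}N'$ is free over $\vert S^{-1}A\vert$, and it is $\fpe$-equivariantly finitely generated. For surjectivity, let $C$ be the cokernel of $S^{-1}\varphi_M$. Lemma~\ref{lem:phired} gives $\image(\varphi_M) + \fm N' = N'$, so $\fm C = C$. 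As a quotient of $S^{-1}N'$, the module $C$ is equivariantly finitely generated, and an equivariant form of Nakayama (argued as in Lemma~\ref{lem:nakayama}, but over $S^{-1}A$, by restricting to $\bC^{N\vert N}$ for large $N$ and invoking the ordinary Nakayama over the local ring $S^{-1}A(\bC^{N\vert N})$) forces $C = 0$.

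For injectivity, let $K = \ker(S^{-1}\varphi_M)$. We have a short exact sequence $0 \to K \to S^{-1}M \to S^{-1}N' \to 0$ of $\fpe$-equivariant $S^{-1}A$-modules, with $S^{-1}M$ equivariantly finitely generated (as $M$ is) and $S^{-1}N'$ free over $\vert S^{-1}A\vert$. Lemma~\ref{lem:eqfg} then shows $K$ is equivariantly finitely generated. Freeness of $S^{-1}N'$ also splits the sequence as $\vert S^{-1}A\vert$-modules, so $S^{-1}M \cong K \oplus S^{-1}N'$. Reducing mod $\fm$ and using that $\overline{\varphi}_M$ is an isomorphism, $K/\fm K = 0$, and equivariant Nakayama gives $K = 0$.

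The hardest part, I expect, is the bookkeeping around the $\fpe$-structure on $(M/\fm M \otimes \bC[\ol B])^T$ and verifying that the Nakayama step genuinely passes from $A$ to $S^{-1}A$ in the equivariant setting. Once these are checked carefully, the argument is a clean combination of Nakayama and the splitting of a short exact sequence with a free quotient.
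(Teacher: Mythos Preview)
Your architecture matches the paper's: reduce to finitely generated $M$, get surjectivity from Lemma~\ref{lem:phired} plus freeness of the target, then handle the kernel via Lemma~\ref{lem:eqfg} and a second Nakayama step. The substantive difference is how that last step is carried out, and here your sketch has a gap.

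You never invoke Lemma~\ref{lem:eqker}, and that lemma is doing real work in the paper's argument. Your modules $K$ and $C$ live inside $S^{-1}M$ and $S^{-1}N'$; they carry only a $\fpe$-action, not a $\GL$-action, so ``restricting to $\bC^{N\vert N}$'' has no meaning for them. The evaluation functor $(-)(\bC^{N\vert N})$ is defined on polynomial $\GL$-representations, and the proof of Lemma~\ref{lem:nakayama} uses exactly this structure (together with Corollary~\ref{cor:pe-gens}, which is a statement about $\fgl$-representations, not $\fpe$-representations). The paper instead uses Lemma~\ref{lem:eqker} to see that $R=\ker(\varphi_M)$, taken inside $M$ \emph{before} localizing, is $\GL$-stable and hence an honest $A$-submodule. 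After Lemma~\ref{lem:eqfg} produces a finite-length $\fpe$-subrepresentation $V$ generating $S^{-1}R$, one forms the $A$-submodule $R_0\subset R$ generated by $V$; then $R_0$ is a finitely generated $A$-module with $S^{-1}R_0=S^{-1}R$ and $R_0/\fm R_0=0$, so Lemma~\ref{lem:nakayama} applies verbatim to give $S^{-1}R=0$.

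Your own splitting observation does rescue the injectivity half by a different route: since $S^{-1}N'$ is free, $K$ is a direct summand of $S^{-1}M$, hence projective, hence free over the local ring $S^{-1}A$ (exactly the reasoning used inside the proof of Lemma~\ref{lem:eqfg}), and a free module with $K/\fm K=0$ vanishes. If you replace the appeal to ``equivariant Nakayama'' by this freeness argument, injectivity goes through. But the restriction-to-$\bC^{N\vert N}$ justification as written does not stand, and for the cokernel $C$ (which is not a priori a summand) you would still need to fall back on the paper's route through Lemma~\ref{lem:eqker} or on the direct argument that a surjection mod $\fm$ onto a free module is already surjective.
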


\addtocounter{equation}{-1}
\begin{subequations}
\begin{proof}
The assignment $M \mapsto \phi_M$ commutes with filtered colimits, and so it suffices to treat the case where $M$ is finitely generated. Let $R$ be the kernel of $\varphi_M$, which is an $A$-submodule of $M$ by Lemma~\ref{lem:eqfg}, and let $N=S^{-1} (M/\fm M \otimes \bC[\ol{B}])^T$. Since $M/\fm M$ is an admissible representation of $T$ (being an algebraic representation of $\fpe$), Proposition~\ref{prop:T-ad} shows that $N$ is a free $S^{-1} A$-module. By Lemma~\ref{lem:phired}, the map $M/\fm M \to N/\fm N$ is an isomorphism. It follows that $S^{-1} \varphi_{M}$ is a surjection, since it is a surjection mod $\fm$ (which is the Jacobson radical of $S^{-1} A$) and $N$ is free. Since localization is exact, we have an exact sequence of algebraic $\fpe$-equivariant $S^{-1}A$-modules
\begin{equation} \label{eq:phiM}
0 \to S^{-1}R \to S^{-1}M \to N \to 0
\end{equation}
From Lemma~\ref{lem:eqfg}, we conclude that $S^{-1} R$ is equivariantly finitely generated. Let $V \subset R$ be a finite length algebraic representation generating $R$ as an $\vert S^{-1}A \vert$-module, and let $R_0$ be the $A$-submodule of $R$ generated by $V$. Note that $R_0$ is finitely generated as an $A$-module and $S^{-1}R_0=S^{-1}R$. Now, the mod $\fm$ reduction of \eqref{eq:phiM} is exact, by the freeness of $N$, and the reduction of $S^{-1}M \to N$ is an isomorphism. We conclude that $R/\fm R=R_0/\fm R_0=0$. Lemma~\ref{lem:nakayama} thus shows that $0=S^{-1}R_0=S^{-1} R$, and the proposition is proved.
\end{proof}
\end{subequations}

\begin{corollary} 
\label{cor:freeM}
Let $M$ be an $A$-module. Then $S^{-1} M$ is a free $\vert S^{-1} A \vert$-module.
\end{corollary}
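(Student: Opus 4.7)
The plan is to combine Proposition~\ref{prop:phiM} with Proposition~\ref{prop:T-ad}, with the only non-trivial content being the verification that $M/\fm M$ is an admissible representation of $T$. By Proposition~\ref{prop:phiM}, the localized map $S^{-1}\varphi_M$ is an isomorphism of $\vert S^{-1}A \vert$-modules, so
\begin{displaymath}
S^{-1}M \;\cong\; S^{-1}(M/\fm M \otimes \bC[\ol{B}])^T.
\end{displaymath}
Thus it suffices to produce an admissible $T$-representation structure on $M/\fm M$ and invoke Proposition~\ref{prop:T-ad} verbatim.

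For the admissibility check, I would argue as follows. Since $M$ is an $A$-module in the super tca sense, it is in particular a polynomial representation of $\GL_{\infty \mid \infty}$, and therefore decomposes as a direct sum of $\bS_{\lambda}(\bV)$'s and $\bS_{\lambda}(\bV)[1]$'s. Restricting to the Cartan subalgebra, each such Schur functor decomposes into one-dimensional monomial $T$-weight spaces (one can see this by writing it as a subquotient of a tensor power of $\bV$), so $M \vert_T$ is a direct sum of monomial characters, i.e., admissible. Next, $\fm$ is $\fpe$-stable and hence $T$-stable (as $T \subset \Pe$), so $\fm M$ is a $T$-subrepresentation of $M$. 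Any quotient of an admissible $T$-representation by a $T$-subrepresentation is again admissible — semisimplicity of the torus action means the quotient is a direct sum of a subfamily of the original monomial characters — so $M/\fm M$ is admissible. Proposition~\ref{prop:T-ad} then yields that $S^{-1}(M/\fm M \otimes \bC[\ol{B}])^T$ is free over $\vert S^{-1}A \vert$, and the previous display transports the conclusion to $S^{-1}M$. There is no real obstacle here: all the work has already been done in Propositions~\ref{prop:T-ad} and~\ref{prop:phiM}, and this corollary is essentially a formal consequence.
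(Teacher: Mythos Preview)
Your proposal is correct and follows essentially the same approach as the paper: the corollary is immediate from Proposition~\ref{prop:phiM} together with Proposition~\ref{prop:T-ad}, and indeed the freeness of $N=S^{-1}(M/\fm M \otimes \bC[\ol{B}])^T$ is already established inside the proof of Proposition~\ref{prop:phiM} itself. The only minor difference is that the paper justifies admissibility of $M/\fm M$ by noting it is an algebraic $\fpe$-representation (via Lemma~\ref{lem:free-fiber}) and citing the remark that such representations restrict to admissible $T$-representations, whereas you argue directly that $M$ itself is $T$-admissible as a polynomial $\GL_{\infty\mid\infty}$-representation and pass to the quotient; both routes are equally valid.
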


\begin{remark}
Proposition~\ref{prop:phiM} is the analog of \cite[Prop.~3.6]{sym2noeth}. The proof of Prop.~3.6 given in \cite{sym2noeth} contains two gaps. First, the justification that $\varphi_M$ is an isomorphism modulo $\fm$ is incomplete. Second, and more seriously, the application of Nakayama's lemma to $R$ is inadequately justified. The above proof fills in these gaps in the present case, and can be easily adapted to fill in the gaps of \cite{sym2noeth}.
\end{remark}

\section{$\Mod_K$ and algebraic representations} \label{s:equiv}

For an $A$-module $M$, define $\wt{\Phi}(M)=M/\fm M$. This is naturally a representation of $\fpe$. The main result of this section is the following theorem:

\begin{theorem} \label{thm:modSA-equiv}
The functor $\wt{\Phi}$ induces an equivalence of categories
\begin{displaymath}
\Phi \colon \Mod_{K} \to \Rep(\fpe).
\end{displaymath}
\end{theorem}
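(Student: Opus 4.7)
The plan is to show $\wt{\Phi}$ descends to an equivalence by constructing a quasi-inverse $\Psi$ and checking both composites. The natural transformation $\varphi_M$ from Section~\ref{s:phi} will play the role of the unit of the adjunction, and its good behavior established in Proposition~\ref{prop:phiM} and Lemma~\ref{lem:phired} does most of the work. First, I would verify $\wt{\Phi}(M) = M/\fm M$ lands in $\Rep(\fpe)$: the ideal $\fm$ is $\fpe$-stable by \S\ref{s:prelim}, so the quotient is a $\fpe$-representation, and algebraicity is clear since $M$ is polynomial. To descend $\wt{\Phi}$ through the Serre quotient, I need exactness and vanishing on $\Mod_A^\tors$. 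Both should follow from the identification $\wt{\Phi}(M) \cong (S^{-1}M)/\fm(S^{-1}M)$: indeed, $S$ acts invertibly on the $A/\fm = \bC$-module $M/\fm M$, so $M/\fm M = S^{-1}(M/\fm M) = S^{-1}M/\fm S^{-1}M$. The right-hand side is exact in $M$ because $S^{-1}M$ is free over $\vert S^{-1}A \vert$ (Corollary~\ref{cor:freeM}) and $(S^{-1}A, \fm)$ is local (Proposition~\ref{prop:extended}); and it vanishes when $S^{-1}M = 0$, i.e., on torsion modules (Proposition~\ref{prop:torsion}). Thus $\wt{\Phi}$ factors as an exact functor $\Phi : \Mod_K \to \Rep(\fpe)$.

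For the quasi-inverse, I would take $\Psi(V) = (V \otimes \bC[\ol{B}])^T$, regarded as an $\vert A \vert$-module via $\phi_A : A \to A' = \bC[\ol{B}]^T$. Then $\varphi_M : M \to \Psi(\wt{\Phi}(M))$ is the candidate unit; by Proposition~\ref{prop:phiM} its kernel and cokernel vanish after inverting $S$, so they lie in $\Mod_A^\tors$, and $\varphi_M$ becomes an isomorphism in $\Mod_K$. This gives $\Psi \circ \Phi \cong \mathrm{id}_{\Mod_K}$. The other composite $\Phi(\Psi(V)) \cong V$ is essentially the content of Lemma~\ref{lem:phired}: for a $T$-weight basis of $V$, the vectors $v \otimes p(\bn_v)$ from the proof of Proposition~\ref{prop:T-ad} provide $T$-invariants whose classes modulo $\fm$ form a basis of $\Psi(V)/\fm \Psi(V)$ naturally isomorphic to $V$ as a $\fpe$-representation.

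The main obstacle will be equipping $\Psi(V)$ with a genuine polynomial $\GL(\bV)$-representation structure (rather than merely an $\vert A \vert$-module structure) that is functorial in $V$, and confirming that this structure is compatible with the $\fpe$-action recovered by $\Phi$. This requires a careful look at how the $\GL(\bV)$-action on $\bC[\ol{B}]$, viewed as a polynomial coalgebra, interacts with both the diagonal $T$-action used to form invariants and the intrinsic $\fpe$-action on $V$, which a priori does not extend to $\GL(\bV)$. If a direct construction of the $\GL(\bV)$-action proves awkward, an alternative route is to first establish the equivalence on finite-length objects via Proposition~\ref{prop:wedgesym2-diagram} — where both sides are already known to be equivalent to $\Mod_\cC^\rf$ — and then extend to all of $\Mod_K$ and $\Rep(\fpe)$ by passing to filtered colimits, using exactness of $\Phi$ to guarantee that colimits behave well.
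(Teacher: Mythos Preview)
Your outline correctly identifies the descent of $\wt{\Phi}$ to an exact functor $\Phi$ on $\Mod_K$, and your use of Proposition~\ref{prop:phiM} and Corollary~\ref{cor:freeM} for this step matches the paper. However, the quasi-inverse strategy has a genuine gap precisely at the obstacle you flag: for an arbitrary $V\in\Rep(\fpe)$ there is no evident polynomial $\GL(\bV)$-action on $(V\otimes\bC[\ol B])^T$, so $\Psi(V)$ is not an object of $\Mod_A$ and hence not of $\Mod_K$. Your fallback via Proposition~\ref{prop:wedgesym2-diagram} does not rescue this: that proposition concerns finite-length $A$-modules, and every finite-length $A$-module is annihilated by a power of the augmentation ideal, hence is torsion and becomes \emph{zero} in $\Mod_K$. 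So the finite-length objects of $\Mod_K$ are not the images of finite-length $A$-modules, and the equivalence in Proposition~\ref{prop:wedgesym2-diagram} gives no direct information about $\Mod_K$.

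The paper sidesteps the construction of $\Psi$ entirely and instead proves that $\Phi$ is faithful, full, and essentially surjective. Faithfulness is exactly your unit argument with $\varphi_M$. The substantive new work is fullness: given $\ol f\colon M/\fm M\to N/\fm N$ in $\Rep(\fpe)$, the paper uses $\varphi$ to produce an $|S^{-1}A|$-linear lift $f\colon S^{-1}M\to S^{-1}N$, and then shows $f$ is $\fgl$-equivariant on a large $A$-submodule $M'\subset M$ by an induction exploiting the Iwasawa-type decomposition $\fgl=\fb+\fpe$ of Lemma~\ref{lem:iwasawa} (Lemmas~\ref{lem:infin}--\ref{lem:S-lifting}). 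This is the step that manufactures the missing $\GL(\bV)$-equivariance and is the real content absent from your sketch. Essential surjectivity then follows formally from fullness together with Proposition~\ref{prop:pe-inj}: every finite-length object of $\Rep(\fpe)$ is the kernel of a map between finite sums of $\bS_\lambda(\bV)$'s, and such objects are visibly in the image of $\Phi$ by Lemma~\ref{lem:free-fiber}.
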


\begin{lemma} \label{lem:free-fiber}
Let $V$ be a polynomial representation of $\GL_{\infty\mid\infty}$. Then $\wt{\Phi}(A \otimes V)$ is isomorphic, as a $\fpe$-representation, to $V$. For any $A$-module $M$, $\wt{\Phi}(M)$ is in $\Rep(\fpe)$.
\end{lemma}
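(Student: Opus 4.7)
The plan is to unwind the definition of $\wt\Phi$ and reduce everything to the $\fpe$-equivariance of the augmentation $A \to \bC$. First I would note that because $\omega$ is an $\fpe$-invariant form, the augmentation $A \to \bC$ with kernel $\fm$ is $\fpe$-equivariant, so $\fm \subset \vert A \vert$ is an $\fpe$-stable subspace. Since the $A$-module structure on any $M$ is $\GL$-equivariant (hence $\fpe$-equivariant), the subspace $\fm M \subseteq M$ is $\fpe$-stable, so $\wt\Phi(M) = M/\fm M$ carries a canonical $\fpe$-action.

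For the first assertion, I would compute $\wt\Phi(A \otimes V)$ directly. The $A$-module structure on $A \otimes V$ is via multiplication on the first tensor factor, so $\fm \cdot (A \otimes V) = \fm \otimes V$, and therefore
\[
\wt\Phi(A \otimes V) \;=\; (A/\fm) \otimes V \;\cong\; \bC \otimes V \;\cong\; V,
\]
where the middle isomorphism uses the $\fpe$-equivariant identification $A/\fm \cong \bC$. Each step is $\fpe$-equivariant, and the final $\fpe$-action on $V$ is the restriction of its $\GL_{\infty\mid\infty}$-action, as desired.

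For the second assertion, I would use the $A$-linear surjection
\[
A \otimes M \twoheadrightarrow M, \qquad a \otimes m \mapsto am,
\]
where $A$ acts on $A \otimes M$ by multiplication on the first factor; this is manifestly $\GL_{\infty\mid\infty}$-equivariant. Applying the right-exact functor $\wt\Phi$ and using the first part, I obtain a surjection of $\fpe$-representations $M \cong \wt\Phi(A \otimes M) \twoheadrightarrow \wt\Phi(M)$. Since $M$ is a polynomial representation, it decomposes as a direct sum of copies of $\bS_\lambda(\bV)$ and $\bS_\lambda(\bV)[1]$, each of which is a finite length object of $\Rep(\fpe)$ by Proposition~\ref{prop:pe-inj}(a). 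Thus $M$ lies in $\Rep(\fpe)$, and so does its quotient $\wt\Phi(M)$.

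There is no serious obstacle here; the only things to verify carefully are the $\fpe$-equivariance of $\fm$ (which is immediate from $\fpe$-invariance of $\omega$) and the fact that the $A$-action map $A \otimes M \to M$ is a morphism of $A$-modules, which is automatic once $A$ is taken to act on the first factor of $A \otimes M$.
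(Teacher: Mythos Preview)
Your proof is correct and follows essentially the same route as the paper. The paper dispatches the first assertion with ``The first part is clear'' and for the second picks a surjection $A \otimes V \to M$, applies right-exactness of $\wt\Phi$, and notes that quotients of algebraic representations are algebraic; your argument is the same, just with the specific choice $V = M$ via the action map and with more detail spelled out (the $\fpe$-stability of $\fm$, the explicit computation $\wt\Phi(A\otimes V)\cong V$, and the appeal to Proposition~\ref{prop:pe-inj}(a), which the paper does not invoke but which does no harm).
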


\begin{proof}
The first part is clear. For the second part, pick a surjection $A \otimes V \to M$ of $A$-modules. Since $\wt{\Phi}$ is right exact, there is an induced surjection $V \to\wt{\Phi}(M)$. As any quotient of an algebraic representation is algebraic, we conclude that $\wt{\Phi}(M)$ is algebraic. 
\end{proof}

\begin{lemma} \label{lem:phiexact}
The functor $\wt{\Phi}$ is exact and kills $\Mod_A^{\tors}$.
\end{lemma}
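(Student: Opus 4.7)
The plan is to reduce everything to statements we already know about $S^{-1}M$ via Corollary~\ref{cor:freeM}. The key observation is that $\wt\Phi$ factors through localization at $S$. Indeed, the algebra map $A \to A/\fm = \bC$ sends every element of $S$ to a nonzero scalar (by definition of $S$), so it factors through $S^{-1}A$, and this gives a canonical factorization
\begin{displaymath}
\wt\Phi(M) = M \otimes_A \bC = S^{-1}M \otimes_{S^{-1}A} \bC,
\end{displaymath}
where we have used that $S^{-1}A/\fm S^{-1}A \cong \bC$ (which can be extracted from Proposition~\ref{prop:extended}, since the ideal $\fm S^{-1}A$ is maximal). Equivalently, $\wt\Phi(M) \cong S^{-1}M / \fm S^{-1}M$.

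Given this factorization, the vanishing on torsion modules is immediate: if $M \in \Mod_A^{\tors}$ then $S^{-1}M = 0$ by definition of torsion (Proposition~\ref{prop:torsion}(b)), and so $\wt\Phi(M) = 0$.

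For exactness, given a short exact sequence $0 \to M' \to M \to M'' \to 0$ of $A$-modules, localization is exact so $0 \to S^{-1}M' \to S^{-1}M \to S^{-1}M'' \to 0$ remains exact. By Corollary~\ref{cor:freeM}, all three terms are free as $\vert S^{-1}A\vert$-modules; in particular $S^{-1}M''$ is projective, so the sequence splits over $\vert S^{-1}A\vert$. Tensoring a split short exact sequence with $\bC$ over $S^{-1}A$ preserves exactness, so $0 \to \wt\Phi(M') \to \wt\Phi(M) \to \wt\Phi(M'') \to 0$ is exact. (Equivalently, one can invoke vanishing of $\Tor_1^{S^{-1}A}(S^{-1}M'',\bC)$ from projectivity.) I do not anticipate a serious obstacle here — the main technical work has already been done in establishing Corollary~\ref{cor:freeM}; the lemma is essentially just a reformulation of its consequences.
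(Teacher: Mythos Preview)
Your proof is correct and follows essentially the same approach as the paper. For exactness, the paper's one-line ``follows from Corollary~\ref{cor:freeM}'' is exactly your argument with the factorization $\wt\Phi(M) \cong S^{-1}M/\fm S^{-1}M$ left implicit; for killing torsion, the paper instead argues directly that $\fa+\fm=A$ (Corollary~\ref{cor:ann}) forces $M=\fm M$ for finitely generated torsion $M$ and then passes to colimits, whereas you invoke $S^{-1}M=0$ from Proposition~\ref{prop:torsion}(b) --- but the implication (a)$\Rightarrow$(b) in that proposition is proved via exactly the same $\fa+\fm=A$ step, so the two arguments are essentially repackagings of one another. (A minor point: the fact that $S^{-1}A/\fm S^{-1}A\cong\bC$ follows immediately from $A/\fm=\bC$ and $S\cap\fm=\emptyset$; you do not need to appeal to Proposition~\ref{prop:extended}.)
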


\begin{proof}
Exactness follows from Corollary~\ref{cor:freeM}. Let $M$ be a finitely generated torsion $A$-module. Then $\fa M=0$ for some non-zero ideal $I$ of $A$. As $\fa+\fm=A$ by Corollary~\ref{cor:ann}, we see $M=\fm M$, and so $\wt{\Phi}(M)=M/\fm M=0$. Thus $\wt{\Phi}$ kills finitely generated torsion modules. Since $\wt{\Phi}$ commutes with colimits, it thus kills all torsion $A$-modules.
\end{proof}

Lemma~\ref{lem:free-fiber} shows that $\wt{\Phi}$ takes values in $\Rep(\fpe)$. Lemma~\ref{lem:phiexact} shows that $\wt{\Phi}$ factors uniquely as $\Phi \circ \rT$, where $\rT \colon \Mod_A \to \Mod_K$ is the localization functor, and $\Phi \colon \Mod_K \to \Rep(\fpe)$ is an exact functor. We have thus defined $\Phi$. In the remainder of this section, we prove that $\Phi$ is an equivalence.

\begin{lemma} \label{lem:faithful}
$\Phi$ is faithful.
\end{lemma}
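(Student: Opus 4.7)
The plan is to reduce the problem to showing that $\Phi$ reflects the zero object, using the exactness of $\Phi$ (noted just before the statement). Recall that for an exact functor between abelian categories, faithfulness is equivalent to the property that $\Phi(N)=0$ implies $N=0$: given $f \colon X \to Y$ in $\Mod_K$ with $\Phi(f)=0$, exactness gives $\Phi(\operatorname{image}(f)) = \operatorname{image}(\Phi(f)) = 0$, hence $\operatorname{image}(f) = 0$ and $f = 0$. So I would first invoke this abstract principle.

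Next I would use that every object of $\Mod_K$ has the form $\rT(M)$ for some $M \in \Mod_A$, and that $\Phi(\rT(M)) = \wt{\Phi}(M) = M/\fm M$ by construction. Thus the problem reduces to the concrete assertion: \emph{if $M$ is an $A$-module with $M/\fm M = 0$, then $M$ is torsion.}

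This final step should follow immediately from the work of Section~\ref{s:phi}. If $M/\fm M = 0$, then the target of the natural map $\varphi_M \colon M \to (M/\fm M \otimes \bC[\ol{B}])^T$ vanishes, so $\varphi_M = 0$; localizing at $S$ and invoking Proposition~\ref{prop:phiM}, which asserts that $S^{-1}\varphi_M$ is an isomorphism, then forces $S^{-1}M = 0$. By Proposition~\ref{prop:torsion}(b), this is exactly the condition that $M$ be torsion, whence $\rT(M) = 0$ in $\Mod_K$, as required.

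I do not foresee any substantive obstacle here: the heavy lifting was already carried out in Section~\ref{s:phi}, and the present lemma is a formal consequence of Propositions~\ref{prop:phiM} and~\ref{prop:torsion} combined with exactness of $\Phi$. If desired, one could instead argue directly on morphisms by representing an arbitrary morphism of $\Mod_K$ as a roof $M' \to N/N'$ with $M/M'$ and $N'$ torsion, applying naturality of $\varphi$ to transport $\wt\Phi(f)=0$ to $S^{-1}f=0$, and concluding that the image is torsion; but the object-level formulation above is cleaner.
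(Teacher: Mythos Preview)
Your argument is correct and relies on the same key input as the paper, namely Proposition~\ref{prop:phiM}, but packages the reduction differently. The paper argues directly at the level of morphisms: given a map $f \colon M \to N$ of $A$-modules with $\wt\Phi(f)=0$, it uses the naturality square for $\varphi$ together with Proposition~\ref{prop:phiM} to conclude that $S^{-1}f=0$, hence $\rT(f)=0$; since every morphism in $\Mod_K$ arises as $\rT(f)$ (up to the usual roof identifications), faithfulness follows. Your version instead invokes the abstract principle that an exact functor is faithful if and only if it reflects zero objects, and then checks that $\Phi(\rT(M))=M/\fm M=0$ forces $S^{-1}M=0$ via $\varphi_M$. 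This object-level formulation is a bit cleaner and sidesteps the need to discuss how morphisms in the Serre quotient are represented; the paper's morphism-level argument, on the other hand, is precisely the alternative you sketch in your last paragraph and makes the role of the naturality of $\varphi$ more explicit. Either way the content is the same.
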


\begin{proof}
Let $f \colon M \to N$ be a map of $A$-modules such that the induced map $\ol{f} \colon M/\fm M \to N/\fm N$ vanishes. The square
\begin{displaymath}
\xymatrix{
M \ar[r]^-{\varphi_M} \ar[d]_f & (M/\fm M \otimes \bC[\ol{B}])^T \ar[d]^{\ol{f} \otimes 1} \\
N' \ar[r]^-{\varphi_{N'}} & (N'/\fm N' \otimes \bC[\ol{B}])^T}
\end{displaymath}
commutes. Since $\varphi_M$ and $\varphi_{N'}$ are isomorphisms after localizing at $S$ (Proposition~\ref{prop:phiM}), we see that the induced map $f \colon S^{-1} M \to S^{-1} N$ is 0, and so $\rT(f)=0$. We have thus shown that if $f$ is any morphism in $\Mod_A$ such that $\wt{\Phi}(f)=0$ then $\rT(f)=0$. Since every morphism in $\Mod_K$ has the form $\rT(f)$ for some morphism $f$ in $\Mod_A$, it follows that $\Phi$ is faithful.
\end{proof}

We now begin the proof of fullness. Let $M$ and $N$ be torsion-free $A$-modules and let $\ol{f} \colon M/\fm M \to N/\fm N$ be a map of $\fpe$-representations. In what follows, a bar denotes reduction mod $\fm$. We write $U$ for the unipotent radical of $B$ and $C$ for the maximal torus, so that $B=CU$. In the notation of \S \ref{ss:subgroups}, $U$ consists of matrices in $B$ where $a$ and $d$ are strictly upper-triangular, while $C$ consists of matrices where $a$ and $d$ are diagonal and $b$ and $c$ vanish.

\begin{lemma} \label{lem:infin}
Let $m \in M$ and let $n \in N$. Let $H \in \{U,C,B\}$ and let $\fh$ be its Lie algebra. Then $\ol{hn}=\ol{f}(\ol{hm})$ as algebraic functions $H \to N / \fm N$ if and only if $\ol{an}=\ol{f}(\ol{am})$ as elements of $N$, for all $a \in \cU(\fh)$. 
\end{lemma}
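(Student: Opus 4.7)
The plan is to apply the classical principle that a regular function on a connected algebraic group $H$ over $\bC$ is determined by its formal Taylor expansion at the identity $e$, whose coefficients are precisely the values at $e$ of the left-invariant derivations, i.e., of the action of $\cU(\fh)$.

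I would first write the orbit maps explicitly via the comodule structure. Let $\Delta \colon M \to M \otimes \bC[\ol{B}]$ be the coaction coming from the polynomial $\GL(\bV)$-module structure on $M$. If $\Delta(m) = \sum_{i=1}^r m_i \otimes \xi_i$, then $hm = \sum_i \xi_i(h)\, m_i$ for every $h \in B$, and hence $\ol{hm} = \sum_i \xi_i(h)\, \ol{m_i}$; the analogous formula holds for $n$. Thus $\psi(h) \coloneq \ol{hn} - \ol{f}(\ol{hm})$ is a regular function on $H$ taking values in the finite-dimensional subspace of $N/\fm N$ spanned by the $\ol{n_i}$ and $\ol{f}(\ol{m_i})$, and only finitely many of the coordinate variables on $\ol{B}$ appear in the $\xi_i$; for $N$ large enough, $\psi$ therefore factors through the finite-dimensional algebraic group $H_N \coloneq H \cap \GL_{N|N}$.

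A routine induction on the length of a monomial then identifies the left-invariant derivation of $h \mapsto \ol{hm}$ along $a = X_1 \cdots X_k \in \cU(\fh)$, evaluated at $e$, with $\ol{am}$; applied to $\psi$ this yields $(a\psi)(e) = \ol{an} - \ol{f}(\ol{am})$. The ``only if'' direction is now immediate, since the derivatives at $e$ of the zero function vanish. For the converse, assuming $(a\psi)(e) = 0$ for every $a \in \cU(\fh)$ says exactly that $\psi$ vanishes to infinite order at $e$; pairing with linear functionals on the finite-dimensional target reduces the claim to the scalar statement that a regular function on $H_N$ vanishing to infinite order at $e$ must be zero. In each of our three cases $H_N$ is connected and smooth (as $U$ is unipotent, $S$ is a torus, and $B = SU$ is solvable), hence integral, so $\bC[H_N]$ embeds into the completed local ring at $e$ by Krull's intersection theorem, forcing $\psi \equiv 0$.

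The one conceptual subtlety, and the only place one might slip, is that $H$ does \emph{not} act on the $\fpe$-module $M/\fm M$ (since $\fm$ is not $\fb$-stable); the function $h \mapsto \ol{hm}$ must instead be understood as the reduction modulo $\fm$ of the honest $B$-action on $M$ expressed through the comodule map $\Delta$, which is well-defined and regular. Once this is in place, the rest is a direct application of the Taylor-series principle.
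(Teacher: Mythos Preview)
Your approach is sound in outline and genuinely different from the paper's. You give a uniform argument via the formal Taylor expansion at the identity and the principle that $\bigcap_k \fm_e^k = 0$; the paper instead treats the three cases separately, using the polynomial exponential bijection ${}_0(\fu \otimes R) \to U(R)$ (over an arbitrary super $\bC$-algebra $R$) for $H = U$, a weight-space decomposition together with linear independence of characters for $H = S$, and then the factorization $B = SU$ to conclude. Your route is more conceptual; the paper's is more explicit and keeps the super structure in view throughout.

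There is, however, a real gap in your justification of the converse direction. You assert that $H_N$ is ``connected and smooth \ldots\ hence integral,'' but $U_N$ and $B_N$ are genuine supergroup schemes: the coordinates $b_{i,j}$ and $c_{i,j}$ in $\bC[\ol{B}]$ are odd, hence nilpotent, so $\bC[U_N]$ and $\bC[B_N]$ are \emph{never} integral domains. The implication ``smooth and connected $\Rightarrow$ integral'' is a fact about ordinary varieties and simply fails in the super setting. The conclusion you actually need, namely $\bigcap_k \fm_e^k = 0$ in $\bC[H_N]$, is nonetheless true and easy to salvage case by case: for $U_N$ the coordinate ring is a super polynomial ring graded by total degree with $\fm_e$ the positive-degree part, so the intersection is zero trivially; for $S_N$ the coordinate ring is an honest (purely even) Laurent polynomial ring, hence a domain, and your Krull argument applies verbatim; and $B_N \cong S_N \times U_N$ as super varieties, so the two combine. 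Once you replace the appeal to integrality with this, the argument goes through.
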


\begin{proof}
We first prove the result for $H=U$. Let $R$ be a commutative super $\bC$-algebra. We treat elements of $\fh=\fu$ as matrices in the usual way. If $X$ is a super degree~0 element of $\fu \otimes R$, the exponential $\exp(X)=\sum_{n \ge 0} \frac{X^n}{n!}$ is a finite sum and defines an element of $U(R)$, and the map $\exp \colon {}_0(\fu \otimes R) \to U(R)$ is a bijection of sets. Furthermore, if $v$ is a vector in an algebraic representation of $\GL$ then $X^n v=0$ for $n \gg 0$ and $\exp(X) v$ is equal to the finite sum $\sum_{n \ge 0} \frac{X^n}{n!} v$, where $X^n \in \cU(\fh) \otimes R$.

Suppose now that $\ol{an}=\ol{f}(\ol{am})$ for all $a \in \cU(\fu)$. Taking $a=\sum_{n \ge 0} \frac{X^n}{n!}$, we see that $\ol{hn}=\ol{f}(\ol{hm})$ for $h=\exp(X) \in U(R)$. Since every element of $U(R)$ has this form, we conclude that $\ol{hn}=\ol{f}(\ol{hm})$ as functions $H \to N/\fm N$. The reverse direction is similar.

We now treat the case $H=C$. Any algebraic representation of $\GL$ breaks up as a sum of weight spaces for $C$, and the result follows by decomposing $m$ and $n$. Indeed, suppose $\ol{an}=\ol{f}(\ol{am})$ for all $a \in \cU(\fc)$, and write $m=\sum m_i$ and $n=\sum n_i$ where $m_i$ and $n_i$ have weight $\chi_i$. We have $am=\sum \chi_i(a) m_i$ for $a \in \cU(\fc)$, and similarly for $n$. We thus see that $\sum \chi_i(a) \ol{n}_i=\sum \chi_i(a) \ol{f}(\ol{m_i})$ for all $a \in \cU(\fc)$. We conclude that $\ol{n_i}=\ol{f}(\ol{m}_i)$ holds for all $i$ (this uses the fact that characters are linearly independent on $\cU(\fc)$, which requires characteristic~0). Since $S$ acts on $n_i$ and $m_i$ through the same character, it follows that $\ol{hn_i}=\ol{f}(\ol{hm_i})$ for all $h \in S$, and so, summing over $i$, we conclude $\ol{hn}=\ol{f}(\ol{hm})$.

The case $H=B$ follows from the previous two cases, since $B=CU$.
\end{proof}

The diagram in Lemma~\ref{lem:faithful} allows us to define a map $f \colon S^{-1} M \to S^{-1} N$ which is $\vert S^{-1}A \vert$-linear. The map $f$ is characterized by the following lemma.

\begin{lemma} \label{lem:fcrit}
Let $m \in M$ and $n \in N$. Then the following are equivalent:
\begin{enumerate}
\item $n=f(m)$
\item $\ol{hn}=\ol{f}(\ol{hm})$ as functions $H \to N / \fm N$.
\item $\ol{an}=\ol{f}(\ol{am})$ for all $a \in \cU(\fb)$.
\end{enumerate}
\end{lemma}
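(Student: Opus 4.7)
The plan is to exploit the \emph{function interpretation} of $\varphi_M$: by construction of the comultiplication $M \to M \otimes \bC[\ol{B}]$ composed with the projection modulo $\fm$, the element $\varphi_M(m) \in (M/\fm M \otimes \bC[\ol{B}])^T$ is the $M/\fm M$-valued algebraic function on $B$ whose value at $h$ is $\ol{hm}$. Under this identification $\varphi_N(n)$ is the function $h \mapsto \ol{hn}$ and $(\ol{f} \otimes 1)\varphi_M(m)$ is the function $h \mapsto \ol{f}(\ol{hm})$.

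Granting this, $(2) \Leftrightarrow (3)$ is an immediate application of Lemma~\ref{lem:infin} with $H = B$ and $\fh = \fb$. For $(1) \Leftrightarrow (2)$: by the definition of $f$ via the commutative square in the proof of Lemma~\ref{lem:faithful}, $n = f(m)$ in $S^{-1}N$ is equivalent to $\varphi_N(n) = (\ol{f} \otimes 1)\varphi_M(m)$ holding after $S$-localization in $(N/\fm N \otimes \bC[\ol{B}])^T$; condition $(2)$ is the statement that this equality already holds before localization, i.e., as algebraic functions on $B$. The direction $(2) \Rightarrow (1)$ is immediate (apply $S^{-1}$). For $(1) \Rightarrow (2)$, unwinding yields $\varphi_A(s) \cdot [\varphi_N(n) - (\ol{f} \otimes 1)\varphi_M(m)] = 0$ in $(N/\fm N \otimes \bC[\ol{B}])^T$ for some $s \in S$; since $\varphi_A$ is injective by Proposition~\ref{prop:phi-inj} and $\varphi_A(s)$ has value $\ol{s} \in A/\fm = \bC$ at the identity of $B$, which is nonzero because $s \notin \fm$, one argues that $\varphi_A(s)$ acts as a non-zerodivisor on the relevant module, promoting the localized equation to the unlocalized one.

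The main obstacle I anticipate is this final non-zerodivisor property. Because $\bC[\ol{B}]$ is super commutative with nilpotent odd generators $b_{i,j}, c_{i,j}$, multiplication by an arbitrary super-even element need not be injective. The key is that the part of $\varphi_A(s)$ with no odd variables (obtained by setting $b_{i,j} = c_{i,j} = 0$) is a nonzero polynomial in the even variables $a_{i,j}, d_{i,j}$, since its value at the identity matrix is $\ol{s} \neq 0$; this polynomial acts injectively on the free $\bC[\ol{B}]$-module $N/\fm N \otimes \bC[\ol{B}]$, and hence on its $T$-invariant submodule $(N/\fm N \otimes \bC[\ol{B}])^T$.
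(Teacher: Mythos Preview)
Your proof is correct and follows the paper's approach: the equivalence $(b)\Leftrightarrow(c)$ via Lemma~\ref{lem:infin} is exactly what the paper does, and your function interpretation of $\varphi_M$ is the same one the paper invokes.

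Where you go further is in $(a)\Leftrightarrow(b)$. The paper dispatches this in one sentence (``equivalent by definition''), but as you correctly observe, $n=f(m)$ is an equality in $S^{-1}N$, which unwinds to $\varphi_N(n)=(\ol f\otimes 1)\varphi_M(m)$ \emph{after} inverting $S$, whereas $(b)$ asserts the same equality \emph{before} localization. The direction $(b)\Rightarrow(a)$ is automatic; for $(a)\Rightarrow(b)$ one needs that the localization map on $(N/\fm N\otimes\bC[\ol B])^T$ is injective, i.e., that each $\varphi_A(s)$ with $s\in S$ is a non-zerodivisor there. Your filtration argument is sound: grading $\bC[\ol B]$ by total degree in the odd variables $b_{i,j},c_{i,j}$, the degree-zero part of $\varphi_A(s)$ lies in the polynomial ring $\bC[a_{i,j},d_{i,j}]$ and is nonzero (its value at the identity of $B$ is $\ol s\ne 0$), hence is a non-zerodivisor on the free $\bC[a,d]$-module $N/\fm N\otimes\bC[\ol B]$; the usual leading-term argument then shows $\varphi_A(s)$ itself is a non-zerodivisor. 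One minor wording issue: you write ``this polynomial acts injectively on the free $\bC[\ol B]$-module'', but what you need and what your argument actually shows is that $\varphi_A(s)$ (not just its degree-zero piece) acts injectively, deduced from the fact that its degree-zero piece does. With that clarification, your argument supplies exactly the justification the paper's ``by definition'' elides.
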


\begin{proof}
By definition, $\varphi_M(x)$ is the function $B \to M/\fm M$ given by $b \mapsto \ol{bx}$, and so (a) and (b) are equivalent by definition. Lemma~\ref{lem:infin} (with $H=B$) gives the equivalence of (b) and (c).
\end{proof}

\begin{lemma} \label{lem:beq}
Suppose $m \in M$, $n \in N$ and $n=f(m)$. Then $Xn=f(Xm)$ for all $X \in \fb$.
\end{lemma}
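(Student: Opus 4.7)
The plan is to deduce this directly from the characterization of $f$ given in Lemma~\ref{lem:fcrit}, using criterion (c) in both directions. The intuition is that the relation ``$n = f(m)$'' has been encoded as a condition indexed by all of $\cU(\fb)$, and multiplying by $X \in \fb$ just reindexes this family by left-multiplication within $\cU(\fb)$.

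First I would unpack what we need to prove. The statement $Xn = f(Xm)$ is an equation in $S^{-1}N$ between a specific element of $N$ and an image of an element of $M$. By Lemma~\ref{lem:fcrit}, this equation is equivalent to the family of identities
\begin{displaymath}
\ol{a \cdot (Xn)} = \ol{f}\bigl(\ol{a \cdot (Xm)}\bigr) \qquad \text{for all } a \in \cU(\fb)
\end{displaymath}
in $N/\fm N$ and $M/\fm M$ respectively.

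Next, I would rewrite each side using associativity of the $\cU(\fb)$-action: $a(Xn) = (aX)n$ and $a(Xm) = (aX)m$. Since $\fb \subset \cU(\fb)$ and $\cU(\fb)$ is an algebra, the product $aX$ again lies in $\cU(\fb)$. The hypothesis $n = f(m)$, combined with the other direction of Lemma~\ref{lem:fcrit}(c), gives $\ol{a'n} = \ol{f}(\ol{a'm})$ for every $a' \in \cU(\fb)$; specializing to $a' = aX$ yields exactly the identity needed, for each $a \in \cU(\fb)$. Applying Lemma~\ref{lem:fcrit}(c) once more then forces $Xn = f(Xm)$.

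There is essentially no obstacle here: once one observes that $\cU(\fb)$ is closed under left-multiplication by elements of $\fb$, the statement is a formal consequence of Lemma~\ref{lem:fcrit}. The only thing to be mindful of is that $X$ need not be super-even, so when invoking the characterization (c) one should make sure signs are being tracked consistently; but since criterion (c) is stated for arbitrary $a \in \cU(\fb)$ and does not involve any exponentiation (unlike criterion (b), where evenness was used in Lemma~\ref{lem:infin}), no subtlety arises. The fact that $m,n$ live in $M$ and $N$ rather than their localizations is harmless, since the equality asserted takes place in $S^{-1}N$ and we are only verifying it via reductions modulo $\fm$.
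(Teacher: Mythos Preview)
Your proof is correct and follows essentially the same argument as the paper: both reduce via Lemma~\ref{lem:fcrit}(c) to checking $\ol{aXn}=\ol{f}(\ol{aXm})$ for all $a\in\cU(\fb)$, and then observe that $aX\in\cU(\fb)$ so the hypothesis $n=f(m)$ applies with $a'=aX$.
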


\begin{proof}
By Lemma~\ref{lem:fcrit}, we must show $\ol{aXn}=\ol{f}(\ol{aXm})$ for all $a \in \cU(\fb)$. But $aX \in \cU(\fb)$ since $X \in \fb$, and so the identity holds by Lemma~\ref{lem:fcrit}.
\end{proof}

\begin{lemma} \label{lem:peeq}
Suppose $m \in M$, $n \in N$ and $n=f(m)$. Then $Yn=f(Ym)$ for all $Y \in \pe$.
\end{lemma}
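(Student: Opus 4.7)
The strategy is to reduce, via Lemma~\ref{lem:fcrit}(c), to showing that $\ol{aYn} = \ol{f}(\ol{aYm})$ for every $a \in \cU(\fb)$. Two ingredients are already in hand: the same identity with $a$ in place of $aY$, which holds by Lemma~\ref{lem:fcrit}(c) applied to the hypothesis $n = f(m)$; and the $\fpe$-equivariance of $\ol{f}$. The task is therefore to bridge the $\fgl$-element $aY$ to these two pieces of data.

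The bridge is a PBW-type factorization
\[
\cU(\fgl) = \cU(\fpe) \cdot \cU(\fb)
\]
of vector spaces. By Lemma~\ref{lem:iwasawa} we have $\fgl = \fb + \fpe$ with $\fb \cap \fpe$ equal to the Lie algebra of $T$, so one may choose a homogeneous ordered basis of $\fgl$ consisting of a basis of $\fpe$ followed by a set of basis vectors of $\fb$ complementing $\fb \cap \fpe$. Super-PBW applied to this ordering yields the factorization. In particular, we may write $aY = \sum_i p_i b_i$ with $p_i \in \cU(\fpe)$ and $b_i \in \cU(\fb)$.

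Since $\fm$ is $\fpe$-stable, the action of $\fpe$ descends to $M/\fm M$ and $N/\fm N$, so reduction modulo $\fm$ commutes with the action of each $p_i$. Combining this with the $\fpe$-equivariance of $\ol{f}$, one computes
\[
\ol{aYn} = \sum_i p_i \ol{b_i n}, \qquad \ol{f}(\ol{aYm}) = \sum_i p_i \ol{f}(\ol{b_i m}).
\]
Finally, Lemma~\ref{lem:fcrit}(c), applied term-by-term with the elements $b_i \in \cU(\fb)$ and the hypothesis $n=f(m)$, gives $\ol{b_i n} = \ol{f}(\ol{b_i m})$ for each $i$; summing delivers the desired identity.

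The only step I expect to require real care is the super-PBW factorization: one must check that the ``right-hand factor'' in each PBW monomial genuinely lies in $\cU(\fb)$ rather than only in a super-symmetric algebra on a subspace of $\fb$ that is not a subalgebra. This is, however, automatic once one observes that an ordered monomial in basis vectors of any Lie (super)subalgebra belongs to its enveloping algebra. Once the factorization is granted, every remaining manipulation is purely formal bookkeeping with the $\fpe$-action on the quotients.
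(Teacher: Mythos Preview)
Your argument is correct, and it is genuinely different from the paper's. The paper proves the identity $\ol{aYn}=\ol{f}(\ol{aYm})$ by induction on the word length of $a$ in generators of $\fb$: at each step it writes $aXY = aYX + a[X,Y]$, splits $[X,Y]=X'+Y'$ with $X'\in\fb$ and $Y'\in\fpe$, and feeds the three resulting terms back into Lemma~\ref{lem:beq}, Lemma~\ref{lem:fcrit}(c), and the inductive hypothesis respectively. You instead invoke the global super-PBW factorization $\cU(\fgl)=\cU(\fpe)\cdot\cU(\fb)$ once, then use that reduction modulo $\fm$ and the map $\ol{f}$ are both $\cU(\fpe)$-equivariant to peel off the $\cU(\fpe)$-factor on the left, reducing to a single application of Lemma~\ref{lem:fcrit}(c). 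Your route is shorter and more conceptual, and notably does not need Lemma~\ref{lem:beq} at all; the paper's route is more elementary in that it avoids appealing to PBW and instead reproves the relevant special case by hand. Your remark about the right-hand factor is fine: the complementary basis vectors you choose lie in $\fb$, so any monomial in them already sits inside $\cU(\fb)$, regardless of whether their span is closed under bracket.
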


\begin{proof}
For $a \in \cU(\fb)$, let $S(a)$ be the following statement:
\begin{quote}
For every $m \in M$ and $n \in N$ and $Y \in \pe$ such that $n=f(m)$ we have $\ol{aYn}=\ol{f}(\ol{aYm})$.
\end{quote}
The statement $S(1)$ holds. Indeed, if $n=f(m)$ then $\ol{n}=\ol{f}(\ol{m})$ and so $\ol{Yn}=Y \ol{f}(\ol{m})=\ol{f}(\ol{Ym})$ since $\ol{f}$ is $\pe$-equivariant. Now suppose $S(a)$ holds, and let us prove $S(aX)$ for $X \in \fb$. Write $[X,Y]=X'+Y'$ with $X' \in \fb$ and $Y' \in \pe$. Then
\begin{align*}
\ol{f}(\ol{aXYm})
&= \ol{f}(\ol{aYXm})+\ol{f}(\ol{aX'm})+\ol{f}(\ol{aY'm}) \\
&= \ol{aYXn}+\ol{aX'n}+\ol{aY'n} \\
&= \ol{aXYn}
\end{align*}
The first line and third lines are clear. Let us explain the second. By Lemma~\ref{lem:beq}, $f(Xm)=Xn$. Thus $\ol{f}(\ol{aYXm})=\ol{aYXn}$ by $S(a)$. We have $\ol{f}(\ol{aX'm})=\ol{aX'n}$ by Lemma~\ref{lem:fcrit}. And we have $\ol{f}(\ol{aY'm})=\ol{aY'n}$ by $S(a)$. We have thus shown that if $S(a)$ holds then $S(aX)$ holds for all $X \in \fb$. It follows that $S(a)$ holds for all $a \in \cU(\fb)$, which (by Lemma~\ref{lem:fcrit}) proves the lemma.
\end{proof}

\begin{lemma} \label{lem:S-lifting}
There exists an $A$-submodule $M'$ of $M$ such that $S^{-1}M'=S^{-1}M$ and for which $f \colon M' \to N$ is a map of $A$-modules.
\end{lemma}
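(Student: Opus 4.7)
The plan is to define
\[
M' \coloneq \{\, m \in M \;:\; f(m) \in N \,\},
\]
where we use that $M$ and $N$ are torsion-free to regard $M \subset S^{-1}M$ and $N \subset S^{-1}N$ (so that $f(m)$ is an unambiguous element of $S^{-1}N$). Clearly $M'$ is an $\vert A\vert$-submodule of $M$, since $f$ is $\vert S^{-1}A\vert$-linear and $N$ is an $\vert A\vert$-submodule of $S^{-1}N$, and it is immediate from the definition that $f(M') \subset N$.

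The main content is showing that $M'$ is $\GL$-stable, i.e., genuinely an $A$-submodule. Since $M$ is a polynomial representation, $\GL$-stability of $M'$ is equivalent to $\fgl$-stability. So fix $m \in M'$ and let $n \coloneq f(m) \in N$. Given $g \in \fgl$, use Lemma~\ref{lem:iwasawa} to write $g = X + Y$ with $X \in \fb$ and $Y \in \fpe$. Lemma~\ref{lem:beq} gives $f(Xm) = Xn$ and Lemma~\ref{lem:peeq} gives $f(Ym) = Yn$, so by $\vert A\vert$-linearity of $f$ we have $f(gm) = gn$. Since $N$ is $\fgl$-stable, $gn \in N$, whence $gm \in M'$. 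Thus $M'$ is $\fgl$-stable, and the same identity $f(gm) = g f(m)$ shows that $f\vert_{M'} \colon M' \to N$ is $\fgl$-equivariant, hence a map of $A$-modules.

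Finally, to verify $S^{-1}M' = S^{-1}M$, let $m \in M$. Then $f(m) \in S^{-1}N$, so there exists $s \in S$ with $s f(m) \in N$. By $\vert A\vert$-linearity, $f(sm) = s f(m) \in N$, so $sm \in M'$, and therefore $m = s^{-1}(sm) \in S^{-1}M'$.

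There is no real obstacle in the above: all the hard work was done in establishing Lemmas~\ref{lem:beq} and \ref{lem:peeq}, which encode the $\fb$- and $\fpe$-equivariance of $f$. The slight point one has to be a bit careful about is the passage from $\fgl$-stability to $\GL$-stability (which is standard for polynomial representations of $\GL_{\infty\vert\infty}$), and the fact that $f$ is defined on all of $M$ — both of which use torsion-freeness of $M$ and $N$.
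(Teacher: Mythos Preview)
Your proof is correct and follows essentially the same approach as the paper: you define $M' = M \cap f^{-1}(N)$, verify it is an $\vert A\vert$-submodule, use Lemmas~\ref{lem:beq} and~\ref{lem:peeq} together with $\fgl = \fb + \fpe$ (Lemma~\ref{lem:iwasawa}) to establish $\fgl$-stability and $\fgl$-equivariance of $f\vert_{M'}$, and check $S^{-1}M' = S^{-1}M$ exactly as the paper does. Your write-up is more explicit than the paper's (in particular you spell out the passage from $\fgl$-stability to $\GL$-stability and the role of torsion-freeness), but the argument is the same.
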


\begin{proof}
Let $M'=M \cap f^{-1}(N)$. Since $f$ is $\vert A \vert$-linear, $M'$ is a $\vert A \vert$-submodule of $M$. Furthermore, for every $m \in M$ there exists $s \in S$ such that $sf(m) \in N$, and so $sm \in M'$. Thus $S^{-1}M'=S^{-1}M$. Finally, it follows from Lemmas~\ref{lem:beq} and~\ref{lem:peeq} that $M'$ is $\gl$-stable and $f$ is $\gl$-equivariant on $M'$, and so the lemma follows.
\end{proof}

\begin{lemma}
The functor $\Phi$ is full.
\end{lemma}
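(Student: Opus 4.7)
The plan is to exhibit, for any $\fpe$-equivariant map $\ol{g} \colon \Phi(\rT(M)) \to \Phi(\rT(N))$, a morphism $g$ in $\Mod_K$ with $\Phi(g) = \ol{g}$. The heavy lifting has already been done in Lemmas~\ref{lem:fcrit}--\ref{lem:S-lifting}; fullness is essentially a bookkeeping exercise about Serre quotients.

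First I would reduce to the case that $M$ and $N$ are both torsion-free. Replacing $M$ by $M/M_{\tors}$ does not change $\rT(M)$, and by exactness of $\wt{\Phi}$ together with Lemma~\ref{lem:phiexact} (which says $\wt{\Phi}$ kills torsion), it does not change $\Phi(\rT(M))$ either; likewise for $N$. So we may assume the setup preceding Lemma~\ref{lem:infin}, with $\ol{g}$ literally a $\fpe$-equivariant map $\ol{f} \colon M/\fm M \to N/\fm N$.

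Next I would invoke Lemma~\ref{lem:S-lifting} to obtain an $A$-submodule $M' \subseteq M$ with $S^{-1}M' = S^{-1}M$ and an $A$-linear map $f \colon M' \to N$ extending (on localizations) to the $\lvert S^{-1}A \rvert$-linear $f$ constructed from $\ol{f}$ via the diagram in Lemma~\ref{lem:faithful}. Because $S^{-1}(M/M') = S^{-1}M / S^{-1}M' = 0$, the quotient $M/M'$ is torsion by Proposition~\ref{prop:torsion}(b), so the inclusion $\iota \colon M' \hookrightarrow M$ becomes an isomorphism under $\rT$. Set $g = \rT(f) \circ \rT(\iota)^{-1} \colon \rT(M) \to \rT(N)$ in $\Mod_K$.

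It remains to check $\Phi(g) = \ol{f}$. By exactness of $\wt{\Phi}$ (Lemma~\ref{lem:phiexact}), $\wt{\Phi}(\iota) \colon M'/\fm M' \to M/\fm M$ is an isomorphism, agreeing with $\Phi(\rT(\iota))$. And by the characterization in Lemma~\ref{lem:fcrit} applied with $a = 1 \in \cU(\fb)$, the identity $n = f(m)$ for $m \in M'$, $n \in N$ forces $\ol{n} = \ol{f}(\ol{m})$; hence $\wt{\Phi}(f) \colon M'/\fm M' \to N/\fm N$ sends $\ol{m}$ to $\ol{f}(\ol{m})$, i.e.\ equals $\ol{f} \circ \wt{\Phi}(\iota)$. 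Composing with $\wt{\Phi}(\iota)^{-1}$ gives $\Phi(g) = \ol{f}$, completing the proof. The only place where anything could subtly go wrong is Step~3 — one must check that the map $f$ actually descends from $S^{-1}M$ to some honest $A$-submodule $M'$ with torsion complement — but this is precisely what Lemma~\ref{lem:S-lifting} delivers.
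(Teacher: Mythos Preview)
Your proof is correct and follows essentially the same route as the paper's: reduce to torsion-free $M,N$, invoke Lemma~\ref{lem:S-lifting} to get $f \colon M' \to N$ with $M/M'$ torsion, pass to $\Mod_K$ via $\rT(\iota)^{-1}$, and verify $\Phi(g)=\ol f$ using Lemma~\ref{lem:fcrit}. Your version is simply more explicit about the torsion-free reduction (which the paper absorbs into the standing hypotheses before Lemma~\ref{lem:infin}) and about why $\wt\Phi(\iota)$ is an isomorphism.
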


\begin{proof}
Let $\ol{f} \colon M/\fm M \to N/\fm N$ be a given map of $\fpe$-representations. From Lemma~\ref{lem:S-lifting}, we obtain a map $f \colon M' \to N$ of $A$-modules, where $M'$ is an $A$-submodule of $M$ with $S^{-1}M'=S^{-1}M$. Since $S^{-1}(M/M')=0$, it follows that $M/M'$ is torsion, and so the inclusion $M' \to M$ becomes an isomorphism in $\Mod_K$. Thus $f$ defines a map $M \to N$ in $\Mod_K$, and it induces $\ol{f}$ after applying $\Phi$. (Reason: applying $\Phi$ is just reducing modulo $\fm$, and $f$ modulo $\fm$ is $\ol{f}$ by Lemma~\ref{lem:fcrit}.)
\end{proof}

\begin{lemma}
$\Phi$ is essentially surjective.
\end{lemma}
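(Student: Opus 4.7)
The plan is to reduce to the already established facts that $\wt{\Phi}(A \otimes V) \cong V$ as a $\fpe$-representation whenever $V$ is a polynomial $\GL$-representation (Lemma~\ref{lem:free-fiber}), that $\Phi$ is exact (Lemma~\ref{lem:phiexact}, together with the exactness of $\rT$), and that $\Phi$ is full (proved in the preceding lemma). The key input from the representation theory side is Proposition~\ref{prop:pe-inj}: the $\bS_\lambda(\bV)$'s are indecomposable injectives in $\Rep(\fpe)$, and there are enough of them to form injective copresentations.

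First I would choose, for a given $V \in \Rep(\fpe)$, an injective copresentation
\[
0 \to V \to I^0 \to I^1
\]
where $I^0$ and $I^1$ are (possibly infinite) direct sums of $\bS_\lambda(\bV)$'s. Such a copresentation exists because $\Rep(\fpe)$ is a Grothendieck category with enough injective cogenerators of the form $\bS_\lambda(\bV)$; on finite length objects this is Proposition~\ref{prop:pe-inj}, and the general case follows by writing $V$ as a filtered colimit of its finite length subobjects and assembling injective envelopes. For each $j$, let $W^j$ denote the polynomial $\GL$-representation underlying $I^j$, so that by Lemma~\ref{lem:free-fiber} we have $\wt{\Phi}(A \otimes W^j) = I^j$ as $\fpe$-representations.

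Next I would use the already established fullness of $\Phi$ to lift the map $I^0 \to I^1$ to a morphism
\[
g \colon \rT(A \otimes W^0) \to \rT(A \otimes W^1)
\]
in $\Mod_K$, and let $M = \ker(g)$, which exists because $\Mod_K$ is abelian. Since $\Phi = \wt{\Phi}$ after passing through $\rT$, and $\wt{\Phi}$ is exact by Lemma~\ref{lem:phiexact} while $\rT$ is exact as a Serre localization, $\Phi$ is exact. Therefore $\Phi(M) = \ker(\Phi(g)) = \ker(I^0 \to I^1) = V$, completing the argument.

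The only genuine subtlety I anticipate is justifying the existence of an injective copresentation for an arbitrary (not necessarily finite length) object of $\Rep(\fpe)$; everything else is routine given the preceding machinery. If the category $\Rep(\fpe)$ is defined strictly to mean finite length representations in this paper, this point is moot and one simply invokes Proposition~\ref{prop:pe-inj}(c) to obtain the copresentation directly. Otherwise, one reduces to the finite length case by expressing $V$ as a filtered colimit and using that $\Phi$ commutes with filtered colimits (since $\wt{\Phi}$ is the right exact functor $-\otimes_A \bC$ and $\rT$ is a left adjoint).
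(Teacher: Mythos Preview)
Your proof is correct and follows essentially the same approach as the paper: build an injective copresentation of the target object using the $\bS_\lambda(\bV)$'s (Proposition~\ref{prop:pe-inj}), recognize each injective as $\Phi$ applied to a free $A$-module via Lemma~\ref{lem:free-fiber}, lift the differential using fullness, and take the kernel using exactness of $\Phi$. The paper handles the passage from finite length to arbitrary objects by invoking compatibility of $\Phi$ with direct limits at the outset, whereas you discuss this reduction at the end; otherwise the arguments are the same.
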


\begin{proof}
Since $\Phi$ is full and compatible with direct limits, it suffices to show that all finitely generated objects of $\Rep(\fpe)$ are in the essential image of $\Phi$. Thus let $M$ be such an object. By Proposition~\ref{prop:pe-inj}(c), we can realize $M$ as the kernel of a map $f \colon I \to J$, where $I$ and $J$ are injective objects of $\Rep(\fpe)$. By Proposition~\ref{prop:pe-inj}(b), every injective object of $\Rep(\fpe)$ is the restriction to $\fpe$ of a polynomial representation of $\GL(\bV)$. Thus, by Lemma~\ref{lem:free-fiber}, $I=\Phi(M)$ and $J=\Phi(N)$ for some $M$ and $N$ in $\Mod_{K}$, and (by fullness) $f=\Phi(f')$ for some $f' \colon M \to N$ in $\Mod_{K}$. The exactness of $\Phi$ shows that $M \cong \Phi(\ker(f'))$, and so $\Phi$ is essentially surjective.
\end{proof}

\section{Proof of the main theorem} \label{s:proof}

In this section, we use the ideas from \cite{sym2noeth} to finish the proof that $A$ is noetherian. Let $\bW$ be another copy of $\bV$ with an action of a separate $\GL_{\infty|\infty}$. The algebra $\Sym(\bV \otimes \bW[1])$ has a natural $\GL(\bV) \times \GL(\bW)$ action which turns it into a bivariate twisted skew-commutative algebra.

\begin{proposition}
$\Sym(\bV \otimes \bW[1])$ is noetherian.   
\end{proposition}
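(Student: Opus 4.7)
The plan is to exploit the decomposition of this algebra under the super-parity grading. Computing the super-even and super-odd parts of $\bV \otimes \bW[1]$ and using that super $\Sym$ converts odd summands to exterior algebras, one obtains
\begin{displaymath}
\Sym(\bV \otimes \bW[1]) = \Sym({}_0\bV \otimes {}_1\bW) \otimes \Sym({}_1\bV \otimes {}_0\bW) \otimes \lw({}_0\bV \otimes {}_0\bW) \otimes \lw({}_1\bV \otimes {}_1\bW).
\end{displaymath}
Each tensor factor is a bivariate algebra in ordinary (non-super) infinite vector spaces, so under the equivalence $\cV^\circ \cong \cV_0$ of \S\ref{s:prelim} applied separately to the $\bV$- and $\bW$-sides, each factor corresponds to either a Segre tca $\Sym(\bC^\infty \otimes \bC^\infty)$ or an anti-Segre tca $\lw(\bC^\infty \otimes \bC^\infty)$.

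First I would establish noetherianity of each of the four factors individually. The noetherianity of $\Sym(\bC^\infty \otimes \bC^\infty)$ as a bivariate tca is a known theorem of Sam--Snowden, provable by realizing its module category as a category of functors on a matrix combinatorial category to which Gr\"obner-basis methods apply. The anti-Segre version $\lw(\bC^\infty \otimes \bC^\infty)$ then follows by applying the transpose functor of Remark~\ref{rmk:transpose} to just one of the two $\GL$-factors; by the Cauchy identity $\Sym(\bC^\infty \otimes \bC^\infty) = \bigoplus_\lambda \bS_\lambda \otimes \bS_\lambda$, this one-sided transpose sends the Segre algebra to the anti-Segre one.

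The second and more substantive step is to piece the four factors together. Tensor products of noetherian tca's are not automatically noetherian, so this requires a direct argument. I would proceed by combining the four factors into a single combinatorial Gr\"obner setup: realize the module category of $\Sym(\bV \otimes \bW[1])$ as functors on a ``decorated'' bivariate matrix category that records all four sign/parity contributions simultaneously, and then verify the Gr\"obner-basis noetherianity hypotheses (existence of suitable term orders and descending chain condition on initial ideals) for this enriched category in a signed super analog of the Sam--Snowden proof. An alternative, perhaps less clean, route is an induction: show that adjoining one super-bilinear generator bifunctor to a noetherian bivariate tca preserves noetherianity, and then iterate across the four factors.

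The main obstacle will be the parity-and-sign bookkeeping in whichever route is taken; the four factors carry different symmetry conventions on each side, and the Gr\"obner-type arguments need to be insensitive to this. I expect the direct combinatorial approach to handle these signs more transparently than the iterated Hilbert-basis argument, since all four pieces can be encoded by a single combinatorial structure (matchings or matrices decorated with four kinds of edges) at the outset.
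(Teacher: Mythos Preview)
Your approach takes a long detour around a one-line argument, and the detour contains a genuine gap.

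The paper's proof applies the transpose functor of Remark~\ref{rmk:transpose} on the $\GL(\bW)$-side to the \emph{whole} algebra at once: this converts the module category of $\Sym(\bV \otimes \bW[1])$ into the module category of $\Sym(\bV \otimes \bW)$, whose noetherianity is already the main bivariate result of \cite{sym2noeth}. That is the entire argument. You already possess exactly this tool---you invoke one-sided transpose to pass from the Segre to the anti-Segre tca---but you only apply it to your individual factors rather than to $\Sym(\bV \otimes \bW[1])$ globally.

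The four-factor decomposition you write is correct as an identity of \emph{underlying algebras}, but it is not $\GL(\bV)\times\GL(\bW)$-equivariant: the factors are only stable under the block-diagonal subgroup $\GL({}_0\bV)\times\GL({}_1\bV)\times\GL({}_0\bW)\times\GL({}_1\bW)$. So the module category you must control is not the product of the four bivariate module categories, nor does the equivalence $\cV^\circ\cong\cV_0$ applied ``separately to each factor'' reconstruct it. (If one actually uses $\cV_0\cong\cV^\circ$ on both the $\bV$- and $\bW$-sides together with the shift trick from \S\ref{s:prelim}, one finds that the non-super avatar of $\Sym(\bV\otimes\bW[1])$ is the single anti-Segre algebra $\lw(\bC^\infty\otimes\bC^\infty)$, not a fourfold tensor product.) Consequently your ``second step''---piecing the four factors together via a new decorated Gr\"obner category or an iterated Hilbert-basis argument---is not a routine bookkeeping issue but an independent and substantially harder noetherianity problem, which your proposal leaves open. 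The paper avoids all of this by never breaking the super equivariance in the first place.
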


\begin{proof}
If we apply transpose duality (Remark~\ref{rmk:transpose}) with respect to the $\GL(\bW)$-action, then we see that the category of modules over $\Sym(\bV \otimes \bW[1])$ is equivalent to the category of modules over $\Sym(\bV \otimes \bW)$. The latter is noetherian by \cite[Theorem 1.2]{sym2noeth}, and so the same holds for $\Sym(\bV \otimes \bW[1])$.
\end{proof}

Recall from \cite[\S 2.3]{sym2noeth} that a polynomial representation $V$ is {\bf essentially bounded} if there exist integers $r$ and $s$ such that for any $\bS_\lambda$ appearing in $V$ we have $\lambda_r \le s$.

\begin{proposition} \label{prop:A/I-noeth}
If $I$ is a nonzero ideal of $A$, then $A/I$ is essentially bounded, and in particular, noetherian.
\end{proposition}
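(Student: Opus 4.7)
The plan is to combine Corollary~\ref{cor:ann} with Corollary~\ref{cor:ess-bound}: any nonzero ideal is large enough to kill every partition outside a fixed ``staircase'', leaving only partitions whose $n$-th part is small. The quotient is then essentially bounded, and essentially bounded super skew tca's are noetherian by the classical bounded argument.

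First I would invoke Corollary~\ref{cor:ann} to pick $n \ge 1$ with $\fp_n \subseteq I$. This makes $A/I$ a quotient of $A/\fp_n$, so it suffices to show $A/\fp_n$ is essentially bounded. Next, Corollary~\ref{cor:ess-bound} says that $\fp_n$ contains every $\bS_\lambda(\bV)$ with $n \times (n+1) \subseteq \lambda$, equivalently (since $\lambda$ is weakly decreasing) every $\lambda$ with $\lambda_n \ge n+1$. Combined with the explicit decomposition
\[
A = \bigoplus_{\lambda \in Q_1} \bS_\lambda(\bV)[|\lambda|/2]
\]
recalled in \S\ref{sec:fm}, this forces every Schur functor appearing in $A/\fp_n$ to satisfy $\lambda_n \le n$. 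Hence $A/\fp_n$, and therefore $A/I$, is essentially bounded with parameters $r = s = n$ in the sense of \cite[\S 2.3]{sym2noeth}.

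It remains to deduce noetherianity from essential boundedness. I would appeal to the general principle, implicit in \cite[Prop.~9.1.6]{expos} and used in \cite{sym2noeth}, that an essentially bounded twisted (skew-)commutative algebra is noetherian: after truncating the $\GL$-action to $\GL_r$ with $r$ the bound above, modules over $A/\fp_n$ become finitely generated modules over an ordinary super-polynomial ring in finitely many variables, where noetherianity is classical. The equivalence of module categories between the super skew and ordinary super settings established in \S\ref{s:prelim} (and the transpose functor of Remark~\ref{rmk:transpose}) makes this transfer formal. I expect this last bookkeeping step to be the only real obstacle; the core combinatorial content sits entirely in Corollary~\ref{cor:ess-bound} and Corollary~\ref{cor:ann}, both of which are already in hand.
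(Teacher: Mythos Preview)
Your proposal is correct and follows essentially the same route as the paper: reduce to $A/\fp_n$ via Corollary~\ref{cor:ann}, use Corollary~\ref{cor:ess-bound} to see that $A/\fp_n$ is essentially bounded, and then invoke the general fact that essentially bounded (skew) tca's are noetherian. The only difference is that for the last step the paper simply cites \cite[Proposition~2.4]{sym2noeth} rather than sketching the argument; your reference to \cite[Prop.~9.1.6]{expos} is slightly off (that result is for bounded, not essentially bounded, tca's), so you should cite \cite[Proposition~2.4]{sym2noeth} instead.
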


\begin{proof}
It follows from Corollary~\ref{cor:ess-bound} that $A/\fp_n$ is essentially bounded for all $n$, so the same is true for $A/I$ by Corollary~\ref{cor:ann}. The second part follows from \cite[Proposition 2.4]{sym2noeth}.
\end{proof}

We will need the following fact about the rectangular partitions:

\begin{lemma} \label{lem:LR-rect}
We have $\bS_{n \times k} \subset \bS_\lambda \otimes \bS_\mu$ if and only if $\lambda$ and $\mu$ are complementary shapes in the $n \times k$ rectangle, i.e., $\ell(\lambda), \ell(\mu) \le n$ and $\lambda_i + \mu_{n+1-i} = k$ for $i=1,\dots,n$.
\end{lemma}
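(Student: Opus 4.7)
The plan is to evaluate everything on a specific finite-dimensional vector space and reduce to a computation with characters of $\GL_n$. Let $W$ be an $n$-dimensional vector space. The multiplicity of $\bS_{n \times k}$ in $\bS_\lambda \otimes \bS_\mu$ agrees with the multiplicity of $\bS_{n\times k}(W)$ in $\bS_\lambda(W) \otimes \bS_\mu(W)$, provided we choose $W$ large enough to detect $\bS_{n\times k}$; since $\dim W = n$, we have $\bS_{n\times k}(W) = \det(W)^{\otimes k}$, a $1$-dimensional character of $\GL(W)$.

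First I would handle the length constraint. If $\ell(\lambda) > n$, then $\bS_\lambda(W) = 0$, so no copy of $\det(W)^{\otimes k}$ can appear in $\bS_\lambda(W) \otimes \bS_\mu(W)$; similarly for $\mu$. Hence nonvanishing of the multiplicity forces $\ell(\lambda), \ell(\mu) \le n$, which is half of the stated complementary condition.

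Assuming $\ell(\lambda), \ell(\mu) \le n$, I would compute the multiplicity by adjunction:
\[
\dim \Hom_{\GL(W)}\!\bigl(\det(W)^{\otimes k},\ \bS_\lambda(W) \otimes \bS_\mu(W)\bigr) = \dim \Hom_{\GL(W)}\!\bigl(\bS_\lambda(W)^\vee \otimes \det(W)^{\otimes k},\ \bS_\mu(W)\bigr).
\]
Using the standard identity $\bS_\lambda(W)^\vee \cong \bS_{(-\lambda_n, \dots, -\lambda_1)}(W)$ for rational representations of $\GL(W)$ and twisting by $\det(W)^{\otimes k}$, the left-hand term becomes $\bS_{(k-\lambda_n,\, k-\lambda_{n-1},\, \dots,\, k-\lambda_1)}(W)$. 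This is an irreducible polynomial representation precisely when $\lambda_1 \le k$ (i.e., $\lambda \subseteq n \times k$), and in that case the $\Hom$ space is $1$-dimensional if $\mu_i = k - \lambda_{n+1-i}$ for all $i$ and zero otherwise. Reindexing via $i \mapsto n+1-i$ turns this into the stated condition $\lambda_i + \mu_{n+1-i} = k$.

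There is no serious obstacle here; the only subtlety is keeping conventions straight for $\bS_\lambda(W)^\vee$ when $\dim W = \ell(\lambda)$ (where one must allow rational, not just polynomial, $\GL(W)$-weights) and confirming that the resulting weight is a partition exactly when $\lambda \subseteq n \times k$. As an alternative, the same identity is immediate from Poincar\'e duality in the cohomology ring of the Grassmannian $\mathrm{Gr}(n, n+k)$: the pairing $\sigma_\lambda \cdot \sigma_\mu \mapsto$ coefficient of $\sigma_{n \times k}$ is perfect, with $\sigma_\lambda$ dual to $\sigma_{(k-\lambda_n,\dots,k-\lambda_1)}$.
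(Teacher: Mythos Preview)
Your argument is correct. The length bound follows from $\lambda \subseteq n \times k$ (which is forced by $c^{n\times k}_{\lambda,\mu} \ne 0$), so passing to $W = \bC^n$ loses nothing; the adjunction step and the identification of $\bS_\lambda(W)^\vee \otimes \det^k$ with the irreducible of highest weight $(k-\lambda_n,\dots,k-\lambda_1)$ are standard, and the case $\lambda_1 > k$ is handled since a rational irreducible with a negative weight entry cannot coincide with the polynomial irreducible $\bS_\mu(W)$.

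The paper does not actually prove this lemma: it simply cites \cite[\S 9.4, eqn.~(11)]{fulton}, i.e., the Poincar\'e duality statement in $H^*(\mathrm{Gr}(n,n+k))$ that you mention at the end as an alternative. So your main route is genuinely different from the paper's---you give a self-contained $\GL_n$-character computation rather than invoking Schubert calculus. The advantage of your approach is that it stays entirely within representation theory and requires no outside input beyond the dual/twist formula for irreducibles; the advantage of the citation is brevity and that it places the result in its natural combinatorial context. Your closing remark already ties the two together.
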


\begin{proof}
This is a statement about Littlewood--Richardson numbers which is more transparent in the context of Schubert calculus, see \cite[\S 9.4, eqn. (11)]{fulton}.
\end{proof}

Recall the notion of (FT) from \cite[\S 4.2]{sym2noeth}: if $B$ is a twisted (skew-)commutative algebra, and $M$ is a $B$-module, then $M$ satisfies (FT) over $B$ if $\Tor^B_i(M, \bC)$ is a finite length $\GL(\bV)$-module for all $i \ge 0$. While the definitions and results were stated only in the commutative case, they work perfectly well in the skew-commutative case.

\begin{lemma}
If $I$ is a nonzero ideal of $A$, then $A/I$ satisfies {\rm (FT)} over $A$.
\end{lemma}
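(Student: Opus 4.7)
By Proposition~\ref{prop:A/I-noeth}, the quotient $A/I$ is essentially bounded, and the plan is to promote this to the full (FT) statement by leveraging the noetherianity of the bivariate algebra $\widetilde{A} = \Sym(\bV \otimes \bW[1])$ just established, following the template of \cite[\S 4.2]{sym2noeth}.

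First I would reduce to the case $I = \fp_n$. By Corollary~\ref{cor:ann}, every nonzero ideal contains some $\fp_n$, and from the short exact sequence $0 \to I/\fp_n \to A/\fp_n \to A/I \to 0$ and the long exact sequence of $\Tor^A_\bullet(-,\bC)$, it suffices to establish (FT) for $A/\fp_n$ and for the submodule $I/\fp_n \subset A/\fp_n$. The latter is finitely generated over $A/\fp_n$ by the noetherianity in Proposition~\ref{prop:A/I-noeth} and remains essentially bounded as a submodule of an essentially bounded module, so a parallel argument handles it.

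The remaining task is to show that $A/\fp_n$ satisfies (FT). My plan is to realize $A/\fp_n$ as a finitely generated module over $\widetilde{A}$ via the surjection $\widetilde{A} \twoheadrightarrow A$ induced by the symmetrization map $\bV \otimes \bW \twoheadrightarrow \Sym^2 \bV$ (after identifying $\bW \cong \bV$). Noetherianity of $\widetilde{A}$ then furnishes a resolution by finitely generated free $\widetilde{A}$-modules, whose reduction against $\bC$ is finite length in the bivariate sense, and a change-of-rings spectral sequence computes $\Tor^A_\bullet(A/\fp_n, \bC)$ in terms of this data. The essential boundedness of $A/\fp_n$ is then used to conclude that only finitely many $\bS_\lambda(\bV)$ contribute in each degree.

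The hard part will be that the surjection $\widetilde{A} \twoheadrightarrow A$ is only $\GL(\bV)$-equivariant, not bivariate, so $A/\fp_n$ does not naturally acquire a bivariate $\widetilde{A}$-structure and the bivariate noetherianity cannot be invoked verbatim. To overcome this, I would enrich $A/\fp_n$ via polarization to a bivariate $\widetilde{A}$-module whose diagonal restriction recovers $A/\fp_n$, run the Tor computation there, and extract the univariate finite length statement at the end by combining the essential boundedness of $A/\fp_n$ with the Pieri-style constraints on the bivariate Schur content appearing in the resolution. Alternatively, one might sidestep the bivariate setup by building an explicit Koszul-like resolution of $A/\fp_n$ directly over $A$ that exploits the single Schur-functor structure of the generator $\bS_{n \times (n+1)}(\bV)[n(n+1)/2]$ of $\fp_n$; either way, the crux is controlling how the bivariate (or determinantal) data restricts to a finite-length $\GL(\bV)$-representation.
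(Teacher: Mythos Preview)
Your overall strategy---push the question through the surjection $\phi\colon \widetilde{C}=\Sym(\bV\otimes\bV[1])\to A$ and exploit noetherianity of the bivariate algebra---is the right one, and your reduction to $I=\fp_n$ is fine. But the step you flag as ``the hard part'' is genuinely a gap, and your proposed fix (polarize $A/\fp_n$ to a bivariate $\widetilde{A}$-module) does not work: there is no natural bivariate structure on $A/\fp_n$, and no abstract polarization procedure will manufacture one.

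The paper avoids this by not trying to lift $A/\fp_n$ at all. Instead it fixes the specific bivariate ideal $J_n\subset\widetilde{C}$ generated by $\bS_{n\times(n+1)}(\bV)\otimes\bS_{(n+1)\times n}(\bW)$. By transpose duality applied to \cite[Lemma~4.5]{sym2noeth}, the quotient $\widetilde{C}/J_n$ already satisfies (FT) over $\widetilde{C}$---this is the crucial input from the commutative case, and it is what you are missing. One then shows $\phi(J_n)\subseteq\fp_n\subseteq I$ (easy, via Corollary~\ref{cor:ess-bound}) and, crucially, $\phi(J_n)\ne 0$. The latter is the real work: it is a combinatorial argument using Lemma~\ref{lem:LR-rect} to show that in the decomposition $\widetilde{C}=\Sym(\Sym^2(\bV)[1])\otimes\Sym(\lw^2(\bV)[1])$, the Schur functor $\bS_{n\times(n+1)}$ cannot lie in the ideal generated by $\lw^2(\bV)[1]$. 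Once $\phi(J_n)$ is known to be a nonzero ideal, $A/\phi(J_n)$ is noetherian (Proposition~\ref{prop:A/I-noeth}), and the change-of-rings argument from \cite[Lemma~4.6]{sym2noeth} finishes.

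So the missing idea is: rather than lifting $A/\fp_n$, descend a known (FT) quotient of $\widetilde{C}$ through $\phi$ and verify combinatorially that its image is a nonzero ideal contained in $I$.
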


\begin{proof}
We will follow the proof of \cite[Lemma 4.6]{sym2noeth}\footnote{There is a typo in the published version: $J_\lambda$ should be the ideal generated by $\bS_{2\lambda} \otimes \bS_{2\lambda}$ when $B = \Sym(\Sym^2 \bC^\infty)$, or $\bS_{(2\lambda)^\dagger} \otimes \bS_{(2\lambda)^\dagger}$ when $B = \Sym(\bigwedge^2 \bC^\infty)$.}. By Corollary~\ref{cor:ann}, there exists $n$ such that $I \supseteq \fp_n$ (recall that $\fp_n$ is the ideal generated by $\bS_{n \times (n+1)}$). Let $J_n \subset \Sym(\bV \otimes \bW[1])$ be the ideal generated by $\bS_{n \times (n+1)}(\bV) \otimes \bS_{(n+1) \times n}(\bW)$. Let $\wt{C}$ be the tca $\Sym(\bV \otimes \bV[1])$ with the diagonal action of $\GL(\bV)$. Then there is a surjection of tca's $\phi \colon \wt{C} \to A$ induced by the natural map $\bV^{\otimes 2} \to \Sym^2(\bV)$. By Corollary~\ref{cor:ess-bound}, we have $\phi(J_n) \subseteq \fp_n \subseteq I$.

We claim that $\phi(J_n) \ne 0$. To see this, write $\wt{C} = \Sym(\Sym^2(\bV)[1]) \otimes \Sym(\bigwedge^2(\bV)[1])$. It suffices to show that $\bS_{n \times (n+1)}$ is not in the ideal generated by $\bigwedge^2(\bV)[1]$, and for that, we will show that if $\bS_{n \times (n+1)} \subset \bS_\lambda \otimes \bS_\mu$ where $\bS_\lambda \subset \Sym(\Sym^2(\bV)[1])$ and $\bS_\mu \subset \Sym(\bigwedge^2(\bV)[1])$, then $\mu = \emptyset$. We prove this by induction on $n$; when $n=1$, this is clear. By Lemma~\ref{lem:LR-rect}, this happens if and only if $\ell(\lambda), \ell(\mu) \le n$ and $\lambda_i + \mu_{n+1-i} = n+1$ for $i=1,\dots,n$, i.e., $\lambda$ and $\mu$ are complementary shapes inside of the $n \times (n+1)$ rectangle. Now, $\lambda \in Q_1$ (see \S\ref{sec:fm}) which implies that $\lambda_1 = \ell(\lambda) + 1$. Furthermore, $\mu^\dagger \in Q_1$, which implies that $\mu_1 = \ell(\mu) - 1$. If $\ell(\lambda) < n$, then we must have $\mu_1 = n+1$ and $\ell(\mu) = n$, which is a contradiction, so $\ell(\lambda) = n$. In this case, remove the first row and column from $\lambda$ to get a new shape $\lambda'$ with complementary shape $\mu$ inside of the $(n-1) \times n$ rectangle. By Lemma~\ref{lem:LR-rect}, we have $\bS_{(n-1) \times n} \subset \bS_{\lambda'} \otimes \bS_\mu$. So by induction on $n$, we conclude that $\lambda' = (n-1) \times n$ and hence $\mu = \emptyset$. We conclude that $\phi(J_n) \supset \bS_{n \times (n+1)}$ and hence $\phi(J_n) \ne 0$.

Now we can finish using the arguments from \cite[Lemma 4.6]{sym2noeth}. Some final points: $\wt{C} / J_n$ is (FT) over $\wt{C}$ since we can apply transpose duality (Remark~\ref{rmk:transpose}) to \cite[Lemma 4.5]{sym2noeth}, and $A / \phi(J_n)$ is noetherian by Proposition~\ref{prop:A/I-noeth}.
\end{proof}

\begin{corollary} \label{cor:tors-mod-FT}
If $M$ is a finitely generated $A$-module with nonzero annihilator, then $M$ satisfies {\rm (FT)} over $A$.
\end{corollary}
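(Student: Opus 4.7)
The plan is to exhibit $M$ as the target of a resolution by modules that already satisfy (FT), and then propagate (FT) up the resolution. By hypothesis there is a nonzero ideal $I \subseteq A$ with $IM = 0$, so $M$ is a finitely generated module over the quotient $A/I$. Proposition~\ref{prop:A/I-noeth} tells us $A/I$ is noetherian, so I would build a (possibly infinite) resolution
\[
\cdots \to F_1 \to F_0 \to M \to 0
\]
in the category of $A/I$-modules with each $F_p = (A/I) \otimes V_p$ for some finite-length polynomial representation $V_p$ of $\GL(\bV)$: pick a finite-length generating subrepresentation at each step and iterate on the kernel, which stays finitely generated by noetherianity.

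Next I would verify that each $F_p$ satisfies (FT) over $A$. The previous lemma gives (FT) for $A/I$, so fix a $\GL$-equivariant $A$-free resolution $P_\bullet \to A/I$ in which each $P_i$ is a finite sum of modules $A \otimes W$ with $W$ a finite-length polynomial representation. Tensoring over $\bC$ with $V_p$ is exact and produces an $A$-free resolution of $F_p = (A/I) \otimes V_p$, so
\[
\Tor_i^A(F_p, \bC) \;\cong\; \Tor_i^A(A/I, \bC) \otimes V_p
\]
is of finite length for every $i$.

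Finally I would deduce (FT) for $M$ from (FT) for the $F_p$. Set $K_{-1} = M$ and $K_p = \ker(F_p \to F_{p-1})$ for $p \ge 0$, so that each $0 \to K_p \to F_p \to K_{p-1} \to 0$ is exact. For a fixed $n$, the long exact sequence of $\Tor^A(-,\bC)$ applied to the $p=0$ sequence reduces finite length of $\Tor_n^A(M, \bC)$ to finite length of $\Tor_n^A(F_0, \bC)$ and $\Tor_{n-1}^A(K_0, \bC)$; iterating on the sequences with $p = 1, 2, \dots, n$ reduces further to finite length of $\Tor_{n-p}^A(F_p, \bC)$ for $0 \le p \le n$ together with $\Tor_0^A(K_{n-1}, \bC)$, and the latter is a quotient of $\Tor_0^A(F_n, \bC)$ via the surjection $F_n \twoheadrightarrow K_{n-1}$. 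Each of these is finite length by the previous step, and finite length is closed under extensions and subquotients, so $\Tor_n^A(M, \bC)$ is finite length. The main obstacle is merely the bookkeeping in this last step; a hyper-$\Tor$ spectral sequence $E_1^{p,q} = \Tor_q^A(F_p, \bC) \Rightarrow \Tor_{p+q}^A(M, \bC)$ packages it more cleanly, since only finitely many terms contribute to each total degree and each is of finite length.
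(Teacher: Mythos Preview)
Your argument is correct and is precisely the standard dimension-shifting argument the paper is invoking when it refers to \cite[Proposition~4.3]{sym2noeth}: resolve $M$ by finitely generated free $A/I$-modules (using noetherianity of $A/I$), observe each term inherits (FT) from $A/I$ via the tensor identity $\Tor_i^A((A/I)\otimes V_p,\bC)\cong \Tor_i^A(A/I,\bC)\otimes V_p$, and then propagate (FT) to $M$ by the long exact sequence (or equivalently the hyper-$\Tor$ spectral sequence). There is nothing to add.
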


\begin{proof}
The proof follows as in \cite[Proposition 4.3]{sym2noeth}.
\end{proof}

Recall that $\rT \colon \Mod_A \to \Mod_K$ is the localization functor, where $\Mod_K = \Mod_A / \Mod_A^{\tors}$. Let $\rS \colon \Mod_K \to \Mod_A$ be the section functor, which is the right adjoint to localization. An object $M \in \Mod_A$ is {\bf saturated} if $\ext^i_A(N, M) = 0$ for $i=0,1$ and all $N \in \Mod_A^{\tors}$. This is equivalent to the unit of the adjunction $M \to \rS(\rT(M))$ being an isomorphism.

\begin{proposition} \label{prop:free-sat}
Given a finite length representation $V$ of $\GL(\bV)$, we have $\rS(\rT(V \otimes A)) = V \otimes A$, i.e., $V \otimes A$ is saturated.
\end{proposition}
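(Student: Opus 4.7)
By definition, $V \otimes A$ is saturated iff $\ext^i_A(N, V \otimes A) = 0$ for $i = 0, 1$ and every torsion $A$-module $N$; I verify these two conditions separately.

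The vanishing of $\Hom_A(N, V \otimes A)$ follows from a direct argument. Note that $V \otimes A$ is free as a $\vert A \vert$-module, on any $\bC$-basis of $V$. Given a torsion $N$ and any $n \in N$, Proposition~\ref{prop:torsion}(c) yields $a \in A$ outside the ideal $(y_{i,j}, z_{i,j})$ with $an = 0$; by Lemma~\ref{lem:NZD}, $a$ is a nonzerodivisor in $\vert A \vert$. Any $A$-linear map $f \colon N \to V \otimes A$ then satisfies $a \cdot f(n) = f(an) = 0$, whence $f(n) = 0$ by $\vert A \vert$-freeness, so $f = 0$.

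The vanishing of $\ext^1$ is the substantive part. Since $V \otimes A$ is finitely presented over $A$, $\ext^1_A(-, V \otimes A)$ commutes with filtered colimits, reducing us to $N$ finitely generated. By Corollary~\ref{cor:tors-mod-FT}, such $N$ satisfies (FT), and any extension $0 \to V \otimes A \to E \to N \to 0$ has $E$ finitely generated. The strategy is to produce an equivariant splitting. Applying the exact functor $\wt\Phi = -/\fm(-)$ to the sequence yields $\wt\Phi(E) \cong V$ by Lemma~\ref{lem:phiexact}, since $\wt\Phi$ kills the torsion module $N$; combined with Corollary~\ref{cor:freeM} and Proposition~\ref{prop:phiM}, this gives natural isomorphisms
\begin{displaymath}
S^{-1}E \;\cong\; S^{-1}(V \otimes \bC[\ol{B}])^T \;\cong\; S^{-1}(V \otimes A),
\end{displaymath}
under which the inclusion $V \otimes A \hookrightarrow E$ localizes to the identity.

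The key technical step, which I expect to be the main obstacle, is to promote this $S^{-1}$-level splitting to a $\GL$-equivariant $A$-linear retraction $E \to V \otimes A$. The natural candidate is the composite $E \to S^{-1}E \xrightarrow{\sim} S^{-1}(V \otimes A)$; following the proof of fullness of $\Phi$ in \S\ref{s:equiv}, in particular the arguments of Lemmas~\ref{lem:beq}, \ref{lem:peeq}, and~\ref{lem:S-lifting}, this descends to an equivariant retraction $r' \colon E' \to V \otimes A$ on an $A$-submodule $E' \subseteq E$ with $E/E'$ torsion, restricting to the identity on $V \otimes A$. To conclude one must rule out a nonzero torsion cokernel $E/E'$; this will require a more delicate argument, possibly using the (FT) resolution of $N$ together with the $\Hom$-vanishing established above to force $E = E'$. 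Once the global retraction is in hand, the extension splits as $E \cong (V \otimes A) \oplus N$, yielding the desired vanishing.
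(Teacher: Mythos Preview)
Your $\Hom$-vanishing argument is fine and essentially the same as the paper's. The reduction to finitely generated $N$ is also legitimate, though your justification is garbled: ``$V\otimes A$ finitely presented'' is irrelevant here (that controls $\Hom_A(V\otimes A,-)$, not $\ext^1_A(-,V\otimes A)$). The correct reduction is the $\varprojlim$/$\varprojlim^1$ spectral sequence for a countable filtered colimit, which the paper spells out.

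The real problem is the $\ext^1$ step. Your plan produces a retraction $r'\colon E'\to V\otimes A$ on a submodule $E'\subseteq E$ with $E/E'$ torsion, and then you need to extend $r'$ to $E$. The obstruction to extending lies in $\ext^1_A(E/E',\,V\otimes A)$, and $E/E'$ is a finitely generated torsion module --- so you need precisely the vanishing you are trying to prove. There is no evident induction making $E/E'$ ``smaller'' than $N$, and the alternative you float (using (FT) plus $\Hom$-vanishing to force $E=E'$) has no content: nothing in the construction of $E'$ from Lemma~\ref{lem:S-lifting} prevents $f(E)$ from landing outside $V\otimes A$ before localization. As written, the argument is circular.

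The paper's approach avoids this entirely. It uses (FT) to get a finitely generated free resolution $\bF_\bullet\to N$ through degree $2$, then specializes to $\bC^{n}$ for $n$ large enough that this piece of the resolution is unchanged. The point is that $A(\bC^n)$ is an exterior algebra on finitely many variables, hence \emph{self-injective}; thus $(V\otimes A)(\bC^n)$ is injective and the specialized $\ext^1$ vanishes. This self-injectivity of the finite exterior algebra is the missing idea in your approach.
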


\begin{proof}
Pick $N \in \Mod_A^\tors$. It is clear that $\hom_A(N, V \otimes A) = 0$ since no submodule of $V \otimes A$ is annihilated by a nonzero ideal. Now we show that $\ext^1_A(N, V \otimes A) = 0$. First we assume that $N$ is finitely generated. 

Pick a minimal $A$-free resolution $\bF_\bullet \to N \to 0$ of $N$. Since $N$ is (FT) over $A$ (Corollary~\ref{cor:tors-mod-FT}), each $\bF_i$ is finitely generated. Pick $n$ larger than the number of rows of any minimal generator $\bS_\lambda$ of $\bF_i$ for $i \le 2$. Then the natural map 
\[
\hom_A(\bF_i, V \otimes A) \to \hom_{A(\bC^n)}(\bF_i(\bC^n), (V \otimes A)(\bC^n))^{\GL_n(\bC)}
\]
is an isomorphism for $i \le 2$. Each of these Hom groups is an algebraic representation of $\GL_n(\bC)$, so taking invariants is exact. We conclude that the map
\[
\ext^1_A(N, V \otimes A) \to \ext^1_{A(\bC^n)}(\bF_i(\bC^n), (V \otimes A)(\bC^n))^{\GL_n(\bC)}
\]
is also an isomorphism. Now note that $(V \otimes A)(\bC^n) = V(\bC^n) \otimes A(\bC^n)$ is a finite rank free module over an exterior algebra in a finite number of variables. Since the exterior algebra in finitely many variables is self-injective, we conclude that $(V \otimes A)(\bC^n)$ is an injective $A(\bC^n)$-module. In particular, the desired $\ext^1$ group vanishes.

Now suppose $N$ is not finitely generated. Then $N$ can be written as a countable colimit $N = \varinjlim N_{\alpha}$ of finitely generated submodules. Since $\Hom(-, V\otimes A)$ commutes with colimit, we have a spectral sequence with $E_2$ term $ \varprojlim^i \Ext^j(N_{\alpha}, V\otimes A)$ converging to $\Ext^{i+j}(N, V\otimes A)$. Since the colimit is countable we have $\varprojlim^i = 0$ for $i > 1$. By the previous paragraph, $ \varprojlim^1 \Ext^0(N_{\alpha}, V\otimes A) = \varprojlim^0 \Ext^1(N_{\alpha}, V\otimes A) = 0$. Thus we have $\Ext^{1}(N, V\otimes A) = 0$, as required.
\end{proof}

\begin{proposition}
If $M$ is a finite length $S^{-1} A$-module, then $\rS(M)$ satisfies {\rm (FT)} over $A$.
\end{proposition}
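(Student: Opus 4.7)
The strategy is to lift a finite injective resolution of $M$ in $\Mod_K$ to a bounded complex of finitely generated free $A$-modules, and then propagate (FT) through this complex via $\Tor$ long exact sequences.

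Via the equivalence $\Phi\colon \Mod_K \to \Rep(\fpe)$ of Theorem~\ref{thm:modSA-equiv}, the finite length object $M$ corresponds to a finite length $\fpe$-representation, which by Proposition~\ref{prop:pe-inj}(c) admits a finite length resolution $0 \to \Phi(M) \to V^0 \to V^1 \to \cdots \to V^n \to 0$ by finite length injectives, each $V^i$ a direct sum of Schur functors $\bS_\lambda(\bV)$. Transporting back through $\Phi^{-1}$ gives a finite length injective resolution of $M$ in $\Mod_K$ by objects $\rT(V^i \otimes A)$. By Proposition~\ref{prop:free-sat}, each $V^i \otimes A$ is saturated, so the adjunction $\rT \dashv \rS$ identifies $\Hom_{\Mod_K}(\rT(V^i \otimes A), \rT(V^{i+1} \otimes A)) = \Hom_A(V^i \otimes A, V^{i+1} \otimes A)$. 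This lets us lift each differential uniquely to an $A$-linear map $d^i$ with $d^{i+1}d^i = 0$, producing a complex $P^\bullet$ of finitely generated free $A$-modules with $\rT(P^\bullet)$ equal to the original injective resolution. Since $\rT$ is exact, $H^i(P^\bullet)$ is torsion for $i \geq 1$, and $\ker(d^0) \subseteq V^0 \otimes A$ is a saturated submodule with $\rT$-image $M$, so $H^0(P^\bullet) = \rS(M)$; more generally $\ker(d^i) = \rS(M_i)$ where $M_i := \ker\bigl(\rT(V^i \otimes A) \to \rT(V^{i+1} \otimes A)\bigr)$ has injective resolution of length $n - i$ in $\Mod_K$.

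The proof proceeds by induction on $n$. The base case $n = 0$ is immediate: $\rS(M) = V^0 \otimes A$ trivially satisfies (FT). For the inductive step, apply the induction hypothesis to the shorter resolution of $M_1$ to get (FT) for $\rS(M_1) = \ker(d^1)$. The top cohomology $H^n(P^\bullet) = (V^n \otimes A)/\im(d^{n-1})$ is a quotient of the finitely generated module $V^n \otimes A$, hence is finitely generated, and is torsion; so Corollary~\ref{cor:tors-mod-FT} gives (FT) for $H^n(P^\bullet)$. One then propagates (FT) leftwards along the complex using the two families of short exact sequences $0 \to \ker(d^i) \to V^i \otimes A \to \im(d^i) \to 0$ and $0 \to \im(d^{i-1}) \to \ker(d^i) \to H^i(P^\bullet) \to 0$ together with the associated $\Tor$ long exact sequences; the (FT) of the free modules $V^i \otimes A$ serves as the anchor, ultimately delivering (FT) for $\rS(M) = \ker(d^0)$.

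The main technical subtlety is that the intermediate torsion cohomologies $H^i(P^\bullet)$ for $0 < i < n$ are not obviously finitely generated without the noetherianity of $A$, which is exactly what we are trying to prove, so Corollary~\ref{cor:tors-mod-FT} does not immediately apply in the middle of the complex. The remedy is to strengthen the induction to track finite generation of the saturated kernels $\ker(d^i) = \rS(M_i)$ alongside (FT). Since $H^i(P^\bullet) = \ker(d^i)/\im(d^{i-1})$ and $\im(d^{i-1})$ is always finitely generated (being the image of a map out of the finitely generated module $V^{i-1} \otimes A$), finite generation of $\ker(d^i) = \rS(M_i)$ furnished by the strengthened induction hypothesis forces $H^i(P^\bullet)$ to be finitely generated, at which point Corollary~\ref{cor:tors-mod-FT} supplies (FT). This closes the induction and establishes (FT) for $\rS(M)$.
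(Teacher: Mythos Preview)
Your proof is correct and follows the same approach as the paper (which simply defers to \cite[Proposition~4.8]{sym2noeth}): lift a finite injective resolution in $\Mod_K$ to a complex of finitely generated free $A$-modules using saturation of the $V^i\otimes A$, identify the kernels with the relevant $\rS(M_i)$, and propagate (FT) via the $\Tor$ long exact sequences and Corollary~\ref{cor:tors-mod-FT}. Two minor simplifications: your ``strengthened induction'' is in fact automatic, since (FT) already forces finite generation ($\Tor_0^A(-,\bC)$ finite length gives a finite generating set by graded Nakayama); and once the induction hypothesis yields (FT) for $\ker(d^1)=\rS(M_1)$, only $H^1$ is needed to reach $\ker(d^0)=\rS(M)$, so the discussion of $H^n$ and leftward propagation through all $H^i$ is redundant.
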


\begin{proof}
The proof is the same as \cite[Proposition 4.8]{sym2noeth} using the results from \S\ref{sec:Perep} and Proposition~\ref{prop:free-sat}.
\end{proof}

\begin{theorem}
The tca $A = \Sym(\Sym^2(\bV)[1])$ is noetherian.
\end{theorem}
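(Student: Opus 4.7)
The plan is to assemble the results above into the noetherianity argument of \cite[\S 4]{sym2noeth}. Let $M$ be a finitely generated $A$-module; the goal is to show every ascending chain $N_1 \subseteq N_2 \subseteq \cdots$ of submodules of $M$ stabilizes.

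First I would reduce to the torsion part via the generic functor. Since $M/\fm M$ is a finitely generated polynomial $\GL$-representation, it has finite length as such and hence, by Proposition~\ref{prop:pe-inj}(a), finite length in $\Rep(\fpe)$; via the equivalence of Theorem~\ref{thm:modSA-equiv}, $\rT(M)$ has finite length in $\Mod_K$. The chain of images $\rT(N_i) \subseteq \rT(M)$ therefore stabilizes at some index $i_0$. Replacing $M$ by $M/N_{i_0}$ and the chain by $\{N_i/N_{i_0}\}_{i \ge i_0}$, I am reduced to showing that the torsion submodule $M_{\tors}$ of any finitely generated $A$-module $M$ is noetherian.

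The central subclaim is that $M_{\tors}$ is in fact finitely generated. Once this holds, Proposition~\ref{prop:torsion}(a) together with Corollary~\ref{cor:ann} produce a nonzero annihilator ideal $\fa$ for $M_{\tors}$, and Proposition~\ref{prop:A/I-noeth} guarantees that $A/\fa$ is noetherian; then $M_{\tors}$ is a finitely generated module over a noetherian ring and hence noetherian, so the chain $\{N_i\} \subseteq M_{\tors}$ stabilizes. To establish finite generation of $M_{\tors}$, I would compare $M$ to its saturation through the four-term exact sequence
\begin{displaymath}
0 \to M_{\tors} \to M \to \rS\rT(M) \to Q \to 0,
\end{displaymath}
where $Q$ is torsion. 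The preceding proposition yields that $\rS\rT(M)$ satisfies (FT) over $A$, so it admits a $\GL$-equivariant free resolution whose terms are finite sums of modules $V \otimes A$ with $V$ a finite length polynomial representation. Each such $V \otimes A$ is saturated by Proposition~\ref{prop:free-sat}, so the resolution interacts well with the unit map of saturation, and a diagram chase transports finite generation from $M$ and the (FT) structure of $\rS\rT(M)$ down to $M_{\tors}$ and $Q$.

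The principal obstacle lies precisely in this last step: extracting finite generation of the torsion kernel $M_{\tors}$ and cokernel $Q$ from the (FT) resolution of the saturation, by exploiting the saturation property of the free building blocks $V \otimes A$ and the exactness of $\rT$ and $\rS$ on the relevant pieces. This parallels the technical core of \cite[\S 4]{sym2noeth}, and once it is executed the theorem follows immediately from the reduction above.
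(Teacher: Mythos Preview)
Your proposal is correct and follows exactly the approach the paper takes: the paper's proof is literally the single line ``The proof is the same as \cite[Theorem 4.9]{sym2noeth},'' and you have faithfully reconstructed that argument from the ingredients assembled in \S\ref{s:proof}. One minor imprecision: $M/\fm M$ is not a polynomial $\GL$-representation (since $\fm$ is only $\fpe$-stable), but your conclusion that it has finite length in $\Rep(\fpe)$ is correct, since it is a $\fpe$-quotient of the finite-length generating representation $V\vert_{\fpe}$; the ``principal obstacle'' you flag is dispatched by the Tor long exact sequence exactly as in \cite[\S 4]{sym2noeth}, so there is no genuine gap.
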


\begin{proof}
The proof is the same as \cite[Theorem 4.9]{sym2noeth}.
\end{proof}

\begin{remark}
To get the same result for $\bigwedge^\bullet(\bigwedge^2)$, we can apply transpose duality to $A$. Alternatively, we could follow the proof outlined above; the point is that an odd skew-symmetric bilinear form on $\bV$ is exactly the same thing as an odd symmetric bilinear form on $\bV$.
\end{remark}

\end{document}